\documentclass[a4paper,11pt]{amsart}

\usepackage{color}
\usepackage{comment}
\usepackage{amssymb}
\usepackage{amsfonts}

\usepackage{amscd}

\usepackage{slashed}
\usepackage{pdflscape}
\usepackage{tikz}
\usepackage{enumerate}
\usepackage{multirow}
\usepackage{stmaryrd}

\usepackage{bbold}
\usepackage[backref=page,pagebackref,linkcolor=blue]{hyperref}
\usepackage{mathrsfs}
\usepackage{bbding}

\setlength{\textwidth}{418pt}
\setlength{\oddsidemargin}{17.5pt}
\setlength{\evensidemargin}{17.5pt}

\newcommand{\bm}[1]{\mbox{\boldmath $ #1 $}}

\newtheorem{theorem}{Theorem}[section]
\newtheorem{lemma}[theorem]{Lemma}
  
\newtheorem{proposition}[theorem]{Proposition}
\newtheorem{corollary}[theorem]{Corollary}

\theoremstyle{definition}
\newtheorem{definition}[theorem]{Definition}
\newtheorem{example}[theorem]{Example}

\theoremstyle{remark}
\newtheorem{remark}[theorem]{Remark}

\newtheorem{problem}[theorem]{Problem}

\usepackage{pdfsync}

\usepackage{graphicx} 
\usepackage{amsmath} 
\usepackage{amsfonts}
\usepackage{amssymb}

\newcommand{\be}{\begin{equation}}

\newcommand{\ee}{\end{equation}}

\newcommand{\g}{g^o}

\newcommand{\II}{{\bf\rm I\hspace{-.2mm}I}}
\newcommand{\IIo}{\mathring{{\bf\rm I\hspace{-.2mm} I}}{\hspace{.2mm}}}

\newcommand{\si}{\sigma}

\newcommand{\ba}{\begin{array}}

\newcommand{\ea}{\end{array}}

\newcommand{\beq}{\begin{eqnarray}}

\newcommand{\eeq}{\end{eqnarray}}

\newtheorem{lm}{lemma}

\newtheorem{thee}{theorem}

\newtheorem{proo}{proposition}

\newtheorem{co}{corollary}

\newtheorem{rem}{remark}

\newtheorem{deff}{definition}

\newcommand{\bd}{\begin{deff}}

\newcommand{\ed}{\end{deff}}

\newcommand{\bl}{\begin{lm}}

\newcommand{\el}{\end{lm}}

\newcommand{\bp}{\begin{proo}}

\newcommand{\ep}{\end{proo}}

\newcommand{\bt}{\begin{thee}}

\newcommand{\et}{\end{thee}}

\newcommand{\bc}{\begin{co}}

\newcommand{\ec}{\end{co}}

\newcommand{\brm}{\begin{rem}}

\newcommand{\erm}{\end{rem}}

\hyphenation{Pa-wel}

\hyphenation{Nu-row-ski}

\hyphenation{And-rzej}

\hyphenation{Traut-man}

\hyphenation{Je-rzy}

\hyphenation{Le-wan-dow-ski}

\hyphenation{Car-tan}

\hyphenation{Car-tan-Pet-rov-Pen-rose}

\hyphenation{Pen-rose}

\hyphenation{or-tho-go-nal}

\hyphenation{comp-lex}

\hyphenation{Pet-rov}

\hyphenation{Euc-lid-ean}

\hyphenation{ge-om-etry}

\hyphenation{Rie-man-nian}

\hyphenation{Ein-stein}

\hyphenation{Ka-te-dra} 

\hyphenation{Me-tod} 

\hyphenation{Ma-te-ma-tycz-nych}

\hyphenation{Fi-zy-ki}

\hyphenation{Uni-wer-sy-tet} 

\hyphenation{War-szaw-ski} 

\hyphenation{War-sza-wa}


\newcommand{\F}{\overline{F}}

\usepackage{t1enc}
\def\frak{\mathfrak}

\def\Cal{\mathcal}

\newcommand{\newc}{\newcommand}

\let\ccdot\cdot
\def\cdot{\hbox to 2.5pt{\hss$\ccdot$\hss}}

\newc{\aR}{\mbox{\boldmath{$ R$}}}
\newc{\aS}{\mbox{\boldmath{$ S$}}}
\newc{\aT}{\mbox{\boldmath{$ T$}}}
\newc{\aW}{\mbox{\boldmath{$ W$}}}

\newc{\aD}{\mbox{\boldmath{$ D$}}\hspace{-.2mm}}

\newc{\aK}{\mbox{\boldmath{$ K$}}}
\newc{\aL}{\mbox{\boldmath{$ L$}}}


\newcommand{\ce}{{\Cal E}}

\newcommand{\ct}{{\Cal T}}

\newcommand{\hD}{\widehat{D}}

\usepackage{amssymb}
\usepackage{amscd}

\newcommand{\nd}{\nabla}

\newcommand{\Rho}{{\rm P}}
\newcommand{\Up}{\Upsilon}

\newcommand{\Ric}{\operatorname{Ric}}
\newcommand{\Sc}{\operatorname{Sc}}



\newcommand{\cT}{{\mathcal T}}






\newcommand{\nn}[1]{(\ref{#1})}



\newcommand{\bg}{\mbox{\boldmath{$ g$}}}



\newcommand{\J}{{\mbox{\sf J}}}

\newc{\obstrn}[2]{B^{#1}_{#2}}



\newcommand{\rpl}                         
{\mbox{$
\begin{picture}(12.7,8)(-.5,-1)
\put(0,0.2){$+$}
\put(4.2,2.8){\oval(8,8)[r]}
\end{picture}$}}

\newcommand{\lpl}                         
{\mbox{$
\begin{picture}(12.7,8)(-.5,-1)
\put(2,0.2){$+$}
\put(6.2,2.8){\oval(8,8)[l]}
\end{picture}$}}

\usepackage{ifthen}

\newc{\tensor}[1]{#1}
\newc{\Mvariable}[1]{\mbox{#1}}
\newc{\down}[1]{{}_{#1}}
\newc{\up}[1]{{}^{#1}}


%
%
\newc{\JulyStrut}{\rule{0mm}{6mm}}
\newc{\midtenPan}{\mbox{\sf S}}
\newc{\midten}{\mbox{\sf T}}
\newc{\midtenEi}{\mbox{\sf U}}
\newc{\ATen}{\mbox{\sf E}}
\newc{\BTen}{\mbox{\sf F}}
\newc{\CTen}{\mbox{\sf G}}



%
%
%
%

\def\sideremark#1{\ifvmode\leavevmode\fi\vadjust{\vbox to0pt{\vss
 \hbox to 0pt{\hskip\hsize\hskip1em
 \vbox{\hsize2cm\tiny\raggedright\pretolerance10000
  \noindent #1\hfill}\hss}\vbox to8pt{\vfil}\vss}}}

\newcommand{\edz}[1]{\sideremark{#1}}

\numberwithin{equation}{section}



\renewcommand\F{{\mathcal F}}



\newcommand{\B}{\mathcal B}

\newcommand{\nablab}{\bar\nabla}
\newcommand{\cc}{\boldsymbol{c}}

\renewcommand{\=}{\stackrel\Sigma =}



\newcommand{\nablat}{\nabla^\top}
\newcommand{\nablan}{\nabla_n}


\DeclareMathOperator{\tr}{tr}

\newcommand{\db}{\scalebox{.93}{$\bar{d}\hspace{.2mm}$}}
\newcommand{\dbs}{\scalebox{.7}{$\bar d^{\phantom{A^a}\!\!\!\!\!} \hspace{.1mm}$}}

\newcommand{\dd}{\boldsymbol{d}}
\newcommand{\Z}{{\mathcal Z}}

\newcommand{\Wn}{\raisebox{.35mm}{$\stackrel{\scriptscriptstyle n}{W}$}{}}
\newcommand{\Whn}{\raisebox{.35mm}{$\stackrel{\scriptscriptstyle \hat n}{W}$}{}}
\newcommand{\Chn}{\raisebox{.35mm}{$\stackrel{\scriptscriptstyle \hat n}{C}$}{}}
\newcommand{\Cn}{\raisebox{.35mm}{$\stackrel{\scriptscriptstyle  n}{C}$}{}}
\newcommand{\Rn}{\raisebox{.35mm}{$\stackrel{\scriptscriptstyle  n}{R}$}{}}

\newcommand{\sss}{\scriptscriptstyle}
\newcommand{\Bach}{{\rm B}}

\newcommand{\Pre}{{\mathscr P}}

\newcommand{\sideline}[1]{#1 |}
\newcommand{\measure}
{\sideline{\hat \delta  s}}

\begin{document}

\renewcommand{\today}{}
\title{
{Variational Calculus for  Hypersurface Functionals:} \\[2mm]
{  Singular Yamabe Problem Willmore Energies}\\[4mm]
}
\author{Michael Glaros${}^\natural$, A. Rod Gover${}^\sharp$, Matthew Halbasch${}^\natural$ \& Andrew Waldron${}^\natural$}

\address{${}^\sharp$Department of Mathematics\\
  The University of Auckland\\
  Private Bag 92019\\
  Auckland 1\\
  New Zealand,  and\\
  Mathematical Sciences Institute, Australian National University, ACT
  0200, Australia} \email{gover@math.auckland.ac.nz}
  
  \address{${}^{\natural}$Department of Mathematics\\
  University of California\\
  Davis, CA95616, USA} \email{glaros,halbasch,wally@math.ucdavis.edu}

\vspace{10pt}

\renewcommand{\arraystretch}{1}

\begin{abstract} 
We develop the  calculus for 
hypersurface variations 
 based on variation of the hypersurface defining function.
  This is used to show that the functional gradient  of a  new Willmore-like, conformal  hypersurface energy agrees exactly with the obstruction to smoothly solving the singular Yamabe problem for conformally compact four-manifolds. We give explicit hypersurface formul\ae\ for both the energy functional  and the obstruction.

\vspace{15cm}

\noindent
{\sf \tiny Keywords: 
 Calculus of variations, conformally compact, conformal geometry, hypersurfaces, Willmore energy, Yamabe problem}

\end{abstract}


\maketitle

\pagestyle{myheadings} \markboth{Glaros, Gover, Halbasch \& Waldron}{Variational Calculus and the  Singular Yamabe Problem}

\newpage

\tableofcontents

\section{Introduction}

Hypersurface geometry is critically important for the analysis of boundary problems in mathematics and physics. 
Conformal hypersurfaces form 
a subtle but important class of  cases, particularly because of their role in treating the 
boundary at infinity of conformally compact structures. 
Naturally the fundamental problem is the construction and study of local and global invariants. The work~\cite{GW15} develops 
an effective approach to hypersurface geometries by treating   them as conformal infinities (see also the announcement~\cite{CRMouncementCRM}).
This approach includes a calculus for hypersurface invariants based on  defining 
functions. 

In this article we  develop the corresponding  variational theory; namely a hypersurface variational calculus 
 based around defining functions.
A main result is Theorem~\ref{VARY}
which provides the general formula for computing variations of embeddings of
hypersurface functionals 
by varying the underlying hypersurface  defining function. A key motivation 
for our development of that theory was
to develop an effective approach to computing the Euler--Lagrange equations corresponding to the new conformal  
energies of~\cite{GW15}. 
These functionals generalise to higher dimensions the Willmore energy~\cite{Willmore}, also known as the rigid string action~\cite{Polyakov}. For the crucial case of four manifolds, we show that the gradient of this functional agrees with the obstruction to smoothly solving the Yamabe problem for conformally compact structures.

The Willmore energy and its associated gradient provide important
invariants for surfaces. Indeed the Willmore conjecture
concerning absolute minimizers of this energy has stimulated much
geometric analysis~\cite{Riviere}; the general proof of the conjecture was
provided by Marques-Neves~\cite{Marques,Marques1}. In physics, the Willmore energy
functional has applications in string theory where it appears as the entanglement entropy for a three dimensional conformal field theory~\cite{Astaneh}. 
 It also arises as a certain log
coefficient in a holographic notion of physical observables, and is
closely related to the so-called conformal anomaly
\cite{GrahamWitten}.
Furthermore it appears as the renormalized area of a minimal surface embedded in  a hyperbolic three-manifold~\cite{Alexakis}.
An important aspect of these invariants is their conformal
invariance.  While the Willmore energy integrand is not surprising its
gradient, with respect to variations of embeddings, is an important
local conformal invariant because it has linear leading term. 
We call this gradient the {\em Willmore invariant}.

These applications and observations suggest a potentially significant
role in geometry and physics for corresponding conformal invariants in
higher dimensions. For four-dimensional hypersurfaces  in Euclidean spaces,  conformal energy
functionals have been provided in~\cite{Guven} and~\cite{YuriThesis}. Recently it was shown that  for
hypersurfaces of every dimension, there exists a conformally invariant canonical  analog of both the  Willmore energy and the Willmore invariant.   These are
generated and constructed uniformly via a singular Yamabe problem that
places the hypersurface as the boundary at infinity of a conformally
compact, constant scalar curvature, manifold
\cite{CRMouncementCRM,GW15}.
In each  higher dimension, the   analog of the Willmore invariant is called the
{\it obstruction density}.
This work was
 partly inspired by the results~\cite{ACF} of
Andersson--Chru{\'s}ciel--Friedrich who studied boundary
asymptotics for this Loewner--Nirenberg-type problem~\cite{Loewner}. They found that the three-manifold 
obstruction to smoothness was a  conformal invariant and 
also gave some information about the corresponding obstructions in higher dimensions. For three-manifold boundaries the {obstruction density} is  easily
shown to agree with the Willmore invariant.  

In~\cite{CRMouncementCRM,GW15}  the singular Yamabe problem was recast as the tool for
treating hypersurface conformal geometry that is analogous to the
Fefferman-Graham Poincar\'e-metric as an approach to intrinsic conformal
geometry. From this perspective, and using tractor calculus to
re-express the Yamabe equation, the problem of computing the
obstruction is reduced to a simple algorithm that also reveals the
qualitative aspects of the obstruction densities and higher Willmore energies. In particular there is  a dimension
parity dichotomy: For each even hypersurface dimension, the  obstruction density is a  linear-leading-order conformal invariant that is a   close analog of the
Willmore invariant (and should be considered as a fundamental scalar
curvature invariant); via elementary representation theory, odd
dimensional hypersurfaces cannot admit linear leading order
invariants. 

It is an interesting problem to understand  the higher Willmore energies and obstruction densities for odd dimensional hypersurfaces.  
A  particularly important case for both mathematics and physics is  when these are  four-manifold boundaries. In this article we employ the general holographic formula for the obstruction density~\cite{GW15}   to compute the four-manifold obstruction density:
 \begin{proposition}\label{Calzone}
The  
obstruction density for~$\db = 3$ is given by
$$
\B_3 \!= \!\frac16\Big[ \!L^{ab\,}\! \!
\hspace{.2mm}
\big(3\IIo^2_{\!(ab)\!\hspace{.2mm}\circ} \hspace{.4mm}\!\!-\! \hspace{.2mm}\Whn_{ab}\big) -
\IIo^{ab}\!B_{ab}
 + K^2\! - 7\Whn^{ab}\IIo^2_{ab} + 2\Whn_{ab}\Whn^{ab}\! + \IIo^{ab}\IIo^{cd}W_{\!cabd} + \Whn\!_{abc}^\top\Whn^{abc}\Big] .
$$
\end{proposition}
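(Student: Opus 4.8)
The natural starting point is the holographic formula for the obstruction density established in~\cite{GW15}, which expresses $\B_{\db}$ through the jet along the hypersurface of the solution to the singular Yamabe problem. Concretely, one fixes a background defining function $s$ for the hypersurface and seeks $\sigma$ with $\sigma^{-2}\bg$ of constant scalar curvature, equivalently (in tractor form) with scale tractor norm $I_A I^A = 1$; writing
\[
\sigma = s\big(1 + \sigma_1 s + \sigma_2 s^2 + \sigma_3 s^3 + \cdots\big),
\]
the Yamabe condition is solved order by order for the coefficient functions $\sigma_k$. For $\db = 3$ this recursion can be run up to the critical order at which it first fails to determine the next coefficient, and $\B_3$ is precisely the tensor measuring that failure. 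So the first step is to specialise the general algorithm of~\cite{GW15} to $\db = 3$ and record the relevant Taylor and transverse-derivative data $\sigma_1,\sigma_2,\sigma_3,\dots$ as hypersurface tensors, together with the curvature of the ambient metric evaluated along the hypersurface.

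The second step is to push everything through the hypersurface dictionary of~\cite{GW15}: the raw output is written in terms of $\bg$, its Levi-Civita connection, the ambient Riemann/Weyl curvature, and the unit conormal, and one must rewrite it in intrinsic-plus-extrinsic data. This is where the Gauss, Codazzi and Ricci equations enter, to decompose restrictions and normal components of the ambient curvature; one introduces the second fundamental form $\II$ and its trace-free part $\IIo$, the mean curvature, the Fialkow tensor, the tangential Weyl tensor $W_{cabd}$, and the Cotton-type object $\Whn^{\top}_{abc}$ appearing in the statement. Conformal weights must be tracked at every stage so that the result is a density of the correct weight, and the normalisation $\db = 3$ is what pins down all the numerical coefficients in $\B_3$.

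The third step is reorganisation and verification. The naive computation produces many more monomials in $\{L_{ab},\IIo_{ab},\Whn_{ab},B_{ab},K,W_{cabd},\Whn^{\top}_{abc}\}$ than appear in the displayed six-term expression; one collapses them using the contracted Bianchi identities together with the linear identities special to a three-dimensional hypersurface (in particular the relations among $\IIo^2_{ab}$, $\Whn_{ab}$ and contractions of $W_{cabd}$ available only when $\db=3$). As consistency checks, the final $\B_3$ must be conformally invariant of the right weight, must reduce correctly under the relevant specialisations, and its linear-in-curvature leading term should match the expected Graham--Witten/renormalised-volume anomaly coefficient. The main obstacle is not conceptual but the length and bookkeeping: each individual move is routine, yet the intermediate expressions are large and the cancellations that yield the compact answer rely on deploying the Gauss--Codazzi--Ricci and Bianchi identities and the $\db=3$ dimensional identities in exactly the right order, while keeping curvature conventions and conformal weights consistent with~\cite{GW15}.
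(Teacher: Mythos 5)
You have correctly located the source of $\B_3$ --- it is the critical-order failure of the formal solution to the singular Yamabe problem, as in Theorem~\ref{bigformaggio} --- but what you have written is a plan for a computation rather than the computation itself, and the plan follows the harder of the two available routes. The entire content of Proposition~\ref{Calzone} is the specific list of monomials and their coefficients (the overall $\frac16$ and the relative factors $3,-1,-1,1,-7,2,1,1$). Your second and third steps, where the ambient jet data is converted into Gauss--Codazzi--Fialkow invariants and then collapsed using Bianchi and $\db=3$ identities, are precisely where all of these numbers are generated, and none of that is carried out: no intermediate identity is exhibited, no coefficient is derived, and nothing in the proposal could be checked against the displayed answer. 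Moreover, one of your proposed sanity checks does not apply here: for odd $\db$ the obstruction density has no linear-leading-order part (as the paper notes, this is excluded by representation theory), and indeed the leading term of $\B_3$ is $L^{ab}\big(3\IIo^2_{(ab)\circ}-\Whn_{ab}\big)$, quadratic in extrinsic data, so there is no linear-in-curvature coefficient to match against a Graham--Witten-type anomaly.

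On the route: expanding $\sigma=s(1+\sigma_1 s+\cdots)$, running the recursion to critical order, and then rewriting the output in hypersurface invariants is essentially what the appendix records (the raw expression for $B_3$ in terms of $\nabla.n$ and its normal derivatives), and the paper explicitly describes that rewriting as tedious --- it was completed in~\cite{GW15} only for a flat ambient metric. The actual proof instead starts from the closed-form holographic formula~\nn{holo-form}, $\B_3=\tfrac{1}{72}\,\bar D_A\big[\Sigma^A_B\big({\sf P}_3N^B+[\bar I\cdot D^{\,2}(X^BK)]|_\Sigma\big)\big]$, and evaluates the two tractor ingredients separately in Propositions~\ref{P3N} and~\ref{DK}; each reduces to $4X^B$ times a scalar ($B'$, resp.\ $B''$) plus terms proportional to $\bar D^BK$ and $N^B\delta_{\rm R}K$ that cancel between the two contributions, after which Equation~\nn{DXT} converts $\bar D_A\big[4X^A(B'+B'')\big]$ into $12(B'+B'')$ and yields the stated $\frac16$. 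That organization --- together with Proposition~\ref{LIIo2o}, the hypersurface Bach tensor of Lemma~\ref{hsb}, and the Fialkow and trace-free Codazzi--Mainardi equations --- is what makes the coefficients computable in practice. To complete your version of the argument you would either need to reproduce this tractor bookkeeping or genuinely carry the brute-force rewriting through to the end; as it stands the proposal establishes neither the list of invariants nor any of their coefficients.
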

\noindent
Theorem~\ref{bigformaggio}  
recalls that $\B_{\bar d}$ is a conformally invariant density of weight $-d$, and 
our tensor notations are explained in Section~\ref{conventions} below,
while the hypersurface invariants appearing in the above formula are introduced in Section~\ref{hypersurfaces}. In particular~$\IIo$ denotes the trace-free second fundamental form, while $L^{ab}$ is a
second order, conformally invariant differential operator (that appears  in Bernstein--Gel'fand--Gel'fand~(BGG) sequences) such that $L^{ab}\IIo_{ab}=\nablab^a\nablab^b \IIo_{ab}+\mbox{\it lower order terms}$.

As mentioned above, another main result  in~\cite{GW15} is the construction of
conformally invariant functionals that generalise the Willmore energy to higher dimensions. For a hypersurface $\Sigma$ of dimension $\bar{d}$ these
take the form 
$$
\mathcal{I}_{\bar d}=\alpha_{\bar d} \int_\Sigma dA_{\bar g}\,  N_A {\sf P}_{\bar{d}}\, N^A \, ,
$$ 
where $\alpha_{\bar d}$ is a
non-vanishing constant and 
 ${\sf P}_{\bar{d}}$ is a uniformly constructed family of
conformally invariant operators, and $N^A$ is the normal tractor of~\cite{BEG}  introduced in Section~\ref{htractors}. For $\bar{d}$ even, the operators ${\sf P}_{\bar d}$ take the form
$(\Delta^\top)^{\frac{\bar{d}}{2}}+\mbox{\it lower order terms}$,
so are hypersurface analogs of the GJMS operators of Graham, Jennes, Mason and Sparling~\cite{GJMS}. For $\db$ both even and odd 
a holographic formula for ${\sf P}_{\bar d}$ is known. This
 gives that  ${\sf P}_3$ takes the form $\IIo^{ab}\nabla^\top_a \nabla_b^\top+\mbox{\it lower order terms}$
(the detailed formula appears in Equation~\nn{P23}). So
 for generic hypersurfaces ${\sf P}_3$ is Laplacian-like. Using these results for ${\sf P}_3$ yields 
 \begin{equation}\label{I3}
\mathcal{I}_3= \frac16 \int_\Sigma\, dA_{\bar g} \, \IIo^{ab}\mathcal{F}_{ab}\, ,  
\end{equation}
where the conformally invariant Fialkow tensor takes the form ${\mathcal F}_{ab}=\IIo^2_{ab}$ $+$ $\mbox{\it lower}$ $\mbox{\it order terms}$; see Equation~\nn{Fialkow} and $\alpha_{3}=\frac16$ has been chosen for later convenience.

Our final main result is that the functional gradient of the energy~${\mathcal I}_3$ agrees precisely with the obstruction ${\mathcal B}_3$ to the singular Yamabe problem mentioned above:

\begin{proposition}\label{gradient}
 With respect to hypersurface variations, the  gradient of ${\mathcal I}_3$ is ${\mathcal B}_3$.
\end{proposition}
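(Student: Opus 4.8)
The plan is to compute the first variation of $\mathcal{I}_3$ directly from the explicit formula \eqref{I3} by feeding its building blocks through the defining-function variational calculus of Theorem~\ref{VARY}, and then to recognise the resulting functional gradient as the density $\mathcal{B}_3$ of Proposition~\ref{Calzone}. A variation of the embedding $\Sigma\hookrightarrow M$ is encoded by a variation $\delta\hat\sigma$ of a suitably normalised defining function $\hat\sigma$ (only its normal part survives, tangential variations being reparametrisations), and the gradient of $\mathcal{I}_3$ is by definition the weight $-d$ conformal density $\mathcal{G}$ determined by $\delta\mathcal{I}_3=\int_\Sigma dA_{\bar g}\,\mathcal{G}\,\delta\hat\sigma$; the weights here balance precisely because $\mathcal{B}_{\bar d}$ has weight $-d$ by Theorem~\ref{bigformaggio}. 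The whole problem thus reduces to showing $\mathcal{G}=\mathcal{B}_3$.

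First I would apply Theorem~\ref{VARY} to the three ingredients of \eqref{I3}: the area element $dA_{\bar g}$, the trace-free second fundamental form $\IIo^{ab}$, and the Fialkow tensor $\mathcal{F}_{ab}$ of \eqref{Fialkow}. This produces $\delta(dA_{\bar g})$, $\delta\IIo^{ab}$ and $\delta\mathcal{F}_{ab}$ as explicit expressions that are tangentially differential in $\delta\hat\sigma$ with coefficients built from the hypersurface invariants of Section~\ref{hypersurfaces}. Substituting these into $\delta\mathcal{I}_3$ and using the Leibniz rule turns $\delta\mathcal{I}_3$ into an integral over $\Sigma$ linear in $\delta\hat\sigma$ and its tangential derivatives. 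I would then integrate by parts on $\Sigma$ to move all derivatives off $\delta\hat\sigma$ and read off the remaining coefficient as $\mathcal{G}$. In this step the leading Laplacian-type terms should reorganise into the conformally invariant second-order operator $L^{ab}$ acting on $3\IIo^2_{(ab)\circ}-\Whn_{ab}$ (and into the $\IIo^{ab}B_{ab}$ contribution), while the purely algebraic curvature monomials $K^2$, $\Whn^{ab}\IIo^2_{ab}$, $\Whn_{ab}\Whn^{ab}$, $\IIo^{ab}\IIo^{cd}W_{cabd}$ and $\Whn_{abc}^\top\Whn^{abc}$ of Proposition~\ref{Calzone} should assemble from the lower-order part of $\mathcal{F}_{ab}$ together with the curvature terms generated by commuting tangential derivatives during the integration by parts.

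The main obstacle is the volume and delicacy of the intermediate algebra rather than any conceptual difficulty. Bringing the raw output of Theorem~\ref{VARY} into the canonical normal form of Proposition~\ref{Calzone} requires systematic use of the hypersurface Gauss and Codazzi--Mainardi relations, the conformal transformation laws for $\IIo$, for $\mathcal{F}_{ab}$ and for the tangential and normal projections of the ambient Weyl tensor $W$, and the commutator and curvature identities characterising $L^{ab}$ --- these last being essential to check that the Laplacian-type pieces really do combine into the invariant operator $L^{ab}$ and not into some non-invariant one. A further subtlety is controlling the \emph{transverse order}: one must verify that varying the defining function feeds into $\mathcal{I}_3$ only at the order at which the singular Yamabe obstruction lives, so that no higher-order Yamabe expansion data contaminates the gradient; here the structure of Theorem~\ref{VARY} and the fact that $\mathcal{I}_3$ is built from the holographic operator ${\sf P}_3$ of \eqref{P23} (equivalently from $N_A{\sf P}_3 N^A$) do the essential work. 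At every stage the conformal invariance of both $\mathcal{I}_3$ and its gradient is a stringent consistency check, constraining the admissible monomials and their weights and flagging arithmetic slips.

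Finally, comparing $\mathcal{G}$ term by term with the expression in Proposition~\ref{Calzone}, and noting the shared overall factor $\tfrac16$, gives $\mathcal{G}=\mathcal{B}_3$ as claimed. As an independent check, and a more structural alternative, one may instead hope to appeal to the variational structure of the singular Yamabe problem itself, whereby $\mathcal{I}_3$ plays the role of a renormalised action for the singular Yamabe metric having $\Sigma$ as conformal infinity, and a standard argument identifies the variation of such an action with respect to its boundary data with the obstruction to the asymptotic expansion, namely $\mathcal{B}_3$; from this viewpoint the explicit formulae \eqref{I3} and Proposition~\ref{Calzone} serve only to fix normalisations.
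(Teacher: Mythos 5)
Your plan is correct and follows essentially the same route as the paper: Section~\ref{cfm} likewise applies the defining-function calculus (Lemma~\ref{vary_dA}/Theorem~\ref{VARY}) to $\mathcal{I}_3=\frac16\int_\Sigma dA_{\bar g}\,\IIo^{ab}\F_{ab}$, varies $\hat n$, $\II$ and the Weyl/Fialkow terms, computes the Robin (normal-derivative) contribution, integrates by parts on $\Sigma$, reassembles the result using $L^{ab}$ and the hypersurface Bach tensor, and identifies it term by term with the formula of Proposition~\ref{Calzone}. The concluding ``renormalised action'' argument you mention as an independent check is not carried out in the paper and would require separate justification, but it is not needed for the proof.
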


Our article is structured as follows: In Section~\ref{hypersurfaces}, we review basic hypersurface theory and use  conformal invariance as the organising principle when producing expressions for    
 hypersurface invariants (including a novel {\it hypersurface Bach tensor} $B_{ab}$ which already appears in Proposition~\ref{Calzone} above).
In Section~\ref{dfvc} we develop our hypersurface variational calculus and 
treat Willmore and minimal surfaces as basic examples. In Section~\ref{cfm}
we compute the variational gradient for our four-manifold analog of the Willmore energy. Section~\ref{tractors} is devoted to a review of the essential tractor calculus methods needed to effectively treat the singular Yamabe problem. In Section~\ref{spaces} we compute the obstruction density for four-manifold hypersurfaces. We also compute the obstruction for example metrics, including a hypersurface surrounding the horizon of a Schwarzchild black hole. The appendix gives explicit formul\ae\ for the obstruction density for manifolds of dimensions five  or less  (so including ${\mathcal B}_4$) that can easily be implemented for explicit metrics using computer software packages.

\subsection{Riemannian and conformal geometry conventions}\label{conventions}

We work  with manifolds $M$ of dimension $d$ 
and embedded hypersurfaces of dimension $\db:=d-1$.  
When the dimension $d$ equals 
 three or four,
 we often refer to the latter   as surfaces and spaces, respectively. (Note that the exterior derivative will be denoted by $\dd$, to avoid confusion with the dimension~$d$.)
When~$M$ is equipped with a Riemannian metric~$g$, its Levi-Civita connection will be denoted by  $\nabla$
or $\nabla_a$ in an abstract index notation ({\it cf.}~\cite{Penrose}). The corresponding Riemann curvature tensor $R$  is
$$R(u,v)w=[\nabla_u,\nabla_v]w-\nabla_{[u,v]}w\, ,$$
for arbitrary vector fields $u$, $v$ and $w$. In the abstract index notation, $R$  is denoted by
$R_{ab}{}^c{}_d$ and $R(u,v)w$ is $u^a v^b R_{ab}{}^c{}_d w^d$.  
Cotangent and tangent spaces will be canonically identified using the metric tensor,~$g_{ab}$ and this will be used to raise and lower indices in the standard fashion.

The Riemann curvature can be decomposed into the 
trace-free {\it Weyl curvature} $W_{abcd}$ and the symmetric {\it Schouten tensor}~$\Rho_{ab}$ according to 
$$
R_{abcd}=W_{abcd}+ 2g_{a[c} \Rho_{d]b}-2g_{b[c} \Rho_{d]a}\, .
$$
Here antisymmetrization over a pair of indices is denoted by square brackets so that $X_{[ab]}:=\frac{1}{2}\big(X_{ab}-X_{ba}\big)$. The Schouten and Ricci tensors are related by
$$
\Ric_{bd}:=R_{ab}{}^a{}_d=(d-2)\Rho_{bd}+g_{ab}\J\, ,\quad \J:=\Rho_a^a\, .
$$
The scalar curvature $\Sc=g^{ab}\Ric_{ab}$, thus $\J=\Sc/(2(d-1))$. In two dimensions the Schouten tensor is ill-defined, but we take $\J=\frac12 \Sc$ in that case. In dimension $d\geq 3$, the curl of the Schouten tensor gives the {\it Cotton tensor}
$$
C_{abc}:=2\nabla_{[a} \Rho_{b]c}\, .
$$ 
In dimensions $d\geq 4$, 
\begin{equation}\label{divW}
(d-3) C_{abc}=\nabla_dW_{ab}{}^d{}_c\, .
\end{equation}

A conformal geometry is a manifold equipped with a conformal structure~$\cc$, namely an equivalence class of metrics with equivalence relation $g'\sim g$ when $g'=\Omega^2 g$ for $C^\infty M\ni \Omega>0$. The corresponding Weyl tensors obey $W^{^{\sss g'}}{}_{\!\!\!\!\!\!ab}{}^c{}_d=W^{^{\sss g}}{}_{\!\!\!\!ab}{}^c{}_d$ and thus the  Weyl tensor is a conformal invariant. Conformal invariants may also be density-valued A weight~$w$ density is a  section of the oriented line bundle $\big[(\wedge^d TM)^2\big]^{\frac{w}{2d}}=:\ce M[w]$.
Vector bundles ${\mathcal V}M$ tensored with this are denoted ${\mathcal V}M\otimes \ce M[w]=:{\mathcal V}M[w]$. Each metric $g\in\cc$ determines a section of $(\wedge^d T^*M)^2$ 
and so there is a tautological global section~$\bm g$  of $\odot^2T^*M[2]$ called the {\it conformal metric}.  Hence, each $g\in\cc$ is in~$1:1$ correspondence with a strictly positive section~$\tau\in \ce M[1]$ via $g=\tau^{-2}{\bm g}$. Such sections~$\tau$ will be referred to 
as a {\it true scale}, and we will often present conformally invariant formul\ae\  in terms of a Riemannian metric $g$ by making an (arbitrary) choice of scale $\tau$. It will  be clear from context where this has been done.

In situations where we are dealing with tensors built from higher rank tensors by contractions with vectors or covectors into the first slot such as  $X(u,\cdot)$ (or equivalently $u^a X_{ab}$), we will often use the notation $\stackrel{{\sss u}}X_{a}$. In higher rank cases where more indices are missing, an alternating leftmost-rightmost contraction procedure has been followed, so if $X$ is rank~$3$, then $\stackrel{\sss u}X_b$ denotes $X(u,\cdot,u)$ or equivalently $u^a X_{abc} u^c$.
Contractions across multiple indices of tensors with one another will be denoted by an obvious bracket notation, for example, if $Y$ is rank two and $X$ is rank three, $X(Y,\cdot)$ denotes the covector $X_{abc}Y^{ab}$. Also, for rank two symmetric tensors $S,T$, we will have occasion to employ matrix notations such as $S^2_{ab}:=S^{\phantom{c}}_{ac}S^c_b$
and $\tr ST:=S_{ab}T^{ba}$. Unit weight symmetrization over groups of indices is denoted by round brackets, which we adorn with the symbol $\circ$ when projecting onto the trace-free part of this, for example $X_{(ab)\circ}:=\frac12\big(X_{ab}+X_{ba}\big)-\frac1d g_{ab} X_c{}^c$; for mixed symmetry tensors, projection onto the  trace-free part of a group of indices will be indicated by the notation $\{\ \}\circ$ and their corresponding section spaces will be decorated with a subscript~$\circ$. Finally, we will also use a dot product notation for inner products of vectors $g(u,v)=u_av^b=u.v$ as well as divergences $\nabla.X_b:=\nabla^a X_{ab}$, and $|u|:=\sqrt{u_au^a}:=\sqrt{u^2}$ denotes the length of a vector $u$.

\section{Embedded hypersurfaces}\label{hypersurfaces}

A {\it hypersurface}~$\Sigma$ is a smoothly embedded codimension~1 submanifold of a smooth manifold~$M$. 
A function $s\in C^\infty M$ is called a {\it defining 
function} for $\Sigma$ if the hypersurface is given by its zero locus, {\it i.e.} $\Sigma={\mathcal Z}(s)$ {\it and} the
exterior derivative ${\bm d}s=:n$ is nowhere vanishing along~$\Sigma$. The covector field $n|_\Sigma$ is then a {\it conormal} to the hypersurface and $\hat n^a:=\big(n^a/|n|\big)\big|_\Sigma$ is the {\it unit conormal} while $n^a|_\Sigma$ is a {\it normal vector}. 

The conormal allows the tangent bundle $T\Sigma$ and the subbundle of $TM|_\Sigma$
orthogonal to $n^a$, denoted $TM^\top$, to be canonically identified. Thus we may use the same abstract indices for $T\Sigma$ as for $TM$. The projection of tensors, to hypersurface tensors will be denoted by a superscript~$\top$. Given an extension $\hat n^a$ of the unit normal to $M$, we will also write $v^\top:=v-\hat n\,  \hat n.v$ for vectors (and similarly for tensors) $v\in TM$. Upon restriction, these give hypersurface vectors (and tensors).

The metric $g$ on $M$ induces a metric $\bar g$ on $\Sigma$ given by
\begin{equation}\label{induced}
\bar g_{ab} = g_{ab}|_\Sigma-\hat n_a  \hat n_b\, .
\end{equation}
Given a hypersurface vector field $v^a\in \Gamma(T\Sigma)$, the Gau\ss\ formula relates the restriction to $T\Sigma$ of the Levi-Civita connection $\nabla$ on~$M$  to the hypersurface Levi-Civita connection of $\bar g$ according to 
\begin{equation}\label{II}
\nablab_a v^b = \nabla^\top_a v^b + \hat n^b \II_{ac} v^c\, .
\end{equation} 
In the above formula, the  right hand side is defined independently of how $v^a\in T\Sigma$ is extended to $v^a\in TM$. In general, we will use the term {\it tangential} for operators~${\mathcal O}$ along~$\Sigma$ if they have  the property that for $v$ a smooth extension of some tensor $\bar v$ defined along~$\Sigma$, the quantity ${\mathcal O} v|_\Sigma$ is independent of the choice of this extension.
In the above display, the {\it second fundamental form} $\II_{ab}\in \Gamma(\odot^2 T^*\Sigma)$ is given by
$$
\II_{ab}=\nabla^\top_a \hat n_b\, .
$$
Its ``averaged'' trace is the {\it mean curvature} 
$$
H=\frac1{\bar d\, }\, \II^a_a\, .
$$
We will often use bars to distinguish intrinsic hypersurface~$\Sigma$ quantities from their host space counterparts. In particular, the equations of Gau\ss\  relate ambient  curvatures to intrinsic ones according to
\begin{equation}\label{Gauss}
\begin{split}
 R_{abcd}^\top&=\bar R_{abcd}-2\II_{a[c}\II_{d]b}\, ,\\[1mm]
\Ric_{ab}^\top&=\overline{\Ric}_{ab}+\II^2_{ab}-\db H \II_{ab}\, ,\\[1mm]
\Sc&-2\Ric(\hat n,\hat n)=\overline\Sc-\tr\II^2+\db^2H^2\, . 
\end{split}
\end{equation}
The last of these recovers Gau\ss' {\it Theorema Egregium} for a Euclidean ambient space.
The Codazzi--Mainardi equation gives the covariant curl of the second fundamental form in terms of ambient curvature
\begin{equation}\label{Main}
\nablab_{[a}\II_{b]c}=\frac12 \stackrel{\hat n}R{\!}^\top_{cba}\, .
\end{equation}
Finally the divergence of the Codazzi-Mainardi equation determines the Laplacian of mean curvature:
\begin{equation}\label{LapH}
\bar \Delta H=\frac1\db\big(\nablab.\nablab.\II-\nablab^a\!\stackrel{\hat n}\Ric{\!\!}_{a}^{\!\top}\big)\, .
\end{equation}

\subsection{Conformal hypersurfaces}

The ambient conformal structure $\cc$ induces 
a conformal structure $\bar\cc$ on $\Sigma$.
Locally we may always assume there is a section $\hat n_a\in \Gamma(T^*M[1])$ obeying ${\bm g}_{ab}\hat n^a \hat n^b=1$. This {\it conformal unit conormal} plays the role 
of a Riemannian unit conormal. The trace-free part of the second fundamental form $\IIo_{ab}=\II_{ab}-H\bar g_{ab}$, defined (in a choice of scale) in terms of $\hat n_a$ via  Equation~\nn{II}, is a conformally invariant section of $\odot^2T^*M[1]$.
Thus the quantity
\begin{equation}\label{rigidity}
K:=\tr\IIo^2\in \Gamma(\ce \Sigma[-2])\, ,
\end{equation}
is a conformal density of weight $-2$, that  we call  the {\it rigidity density}.

Conformal invariance of the trace-free second fundamental form~$\IIo$ and the Weyl tensor
can be used to decompose the equations of Gau\ss\  and Codazzi--Mainardi equations into 
conformally invariant parts. First, the Ricci equation in~\nn{Gauss} gives what we shall call the Fialkow--Gau\ss\ equation ({\it cf}.~\cite{YuriThesis})
\begin{equation}\label{Fialkow}
\IIo^2_{ab}\!-\frac12\, \bar g_{ab}\scalebox{1.2}{$\frac{\IIo_{cd_{\phantom{a}\!\!}}\!\IIo^{cd}}{\bar d-1}$}-\Whn_{ab}=(\db-2)\!\Big(\!\Rho_{ab}^\top-\bar \Rho_{ab}+H\IIo_{ab}+\frac 12\,  \bar g_{ab}H^2\!\Big)\!=:\!(\db-2)\F_{ab}\, .
\end{equation}
The left hand side of this equation
is conformally invariant, which proves that the Fialkow tensor $\F_{ab}$ defined by the above in dimension $d\geq 4$ is a conformally invariant section of~$\odot^2T^*M[0]$. This combined with the trace-free second fundamental form gives a new distinguished density 
\begin{equation}\label{mrd}
L:=\tr(\IIo\F)\in \Gamma(\ce \Sigma[-3])\, ,
\end{equation}
which we call  the {\it membrane rigidity density}. 

In dimensions $d\geq 4$, the trace-free part of the first Gau\ss\  equation allows the tangential part of the ambient Weyl tensor to be
traded for its hypersurface counterpart:
$$
W^\top_{abcd}=\bar W_{abcd}-2\IIo_{a[c}\IIo_{d]b}
-\frac{2}{\db-2}\, \big(\bar g_{a[c}\IIo^2_{d]b}-
\bar g_{b[c}\IIo^2_{d]a}
\big)
+\frac{2}{(\db-1)(\db-2)}\, \bar g_{a[c}\bar g_{d]b} K\, .
$$
In dimensions $d\geq 4$, the third Gau\ss\ equation 
expresses the rigidity density in terms of the difference between ambient and hypersurface 
scalar curvatures:
\begin{equation*}\label{JJbar}
K=2(\db-1)\Big(\J- \Rho(\hat n,\hat n)-\bar\J +\frac{\db}{2}\, H^2\Big)
\, .
\end{equation*}
This shows that~$\J-\Rho(\hat n,\hat n)-\bar \J +\frac{\db}{2}\, H^2$ is a conformally invariant weight $-2$ density.

The trace-free part of the Codazzi--Mainardi equation shows that the trace-free curl of the trace-free second fundamental form is a weight $1$ conformal invariant:
\begin{equation}\label{tfcm}
\Wn^\top_{abc}=2\nablab_{\{[c}\IIo_{b]a\}\circ}\, .\end{equation}

Finally, we will need a certain novel four-manifold hypersurface invariant:

\begin{lemma}\label{hsb}
Let $\db=3$ and
$$
\Bach_{ab}^{\sss g}:=\Chn^\top_{(ab)}+H\Whn_{ab}-\nablab^c\Whn^\top_{(ab) c}\, .
$$
Then  $\Bach_{ab}^{\sss g}$ defines a symmetric,  weight $-1$ conformal hypersurface density $$\Bach_{ab}\in\Gamma(\odot^2 T^*\Sigma[-1])$$ given in a choice of scale $g\in\cc$ by $
\Bach_{ab}^{\sss g}$.
\end{lemma}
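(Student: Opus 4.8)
The plan is to verify two things: first that the displayed formula for $\Bach_{ab}^{\sss g}$ is symmetric and trace-free in the intended sense (so it genuinely lands in $\Gamma(\odot^2T^*\Sigma[-1])$ once weights are tracked), and second, the substantive point, that $\Bach_{ab}^{\sss g}$ transforms as a conformal density of weight $-1$ under $g\mapsto\Omega^2 g$, i.e. that $\Bach^{\sss \Omega^2 g}_{ab}=\Omega^{-1}\Bach^{\sss g}_{ab}$ (equivalently, that the three naively scale-dependent terms combine so that all inhomogeneous pieces cancel). First I would fix a scale $g\in\cc$ and a choice of conformal unit conormal $\hat n_a\in\Gamma(T^*M[1])$, and record the weights: $\Whn_{ab}$ is the conformally invariant weight-$1$ tangential projection of the ambient Weyl tensor contracted once with $\hat n$ (built in Section~\ref{hypersurfaces}), $\Wn^\top_{abc}=2\nablab_{\{[c}\IIo_{b]a\}\circ}$ is the weight-$1$ invariant of~\nn{tfcm}, $H$ has weight $-1$, and $\Chn^\top_{(ab)}$ denotes the symmetrised tangential projection of the ambient Cotton tensor contracted once with $\hat n$, which by itself has weight $1$ but is \emph{not} conformally invariant. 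So each of the three summands carries the correct weight $-1$ after the implicit $\hat n$-contractions and the single $\nablab$, and symmetry of each term is manifest; the only content is conformal invariance.

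The main step is the conformal variation. Writing $\Omega=e^\omega$ and $\Upsilon_a:=\nabla_a\omega$, I would use the standard transformation laws: $\hat n_a$ is conformally invariant as a weight-$1$ density, $H^{\sss \Omega^2 g}=\Omega^{-1}(H+\hat n.\Upsilon)$ (or rather $H\mapsto \Omega^{-1}H$ in the conformal-conormal convention, with the $\hat n.\Upsilon$ absorbed into the choice of $\hat n$), the Cotton tensor transforms as $C_{abc}\mapsto C_{abc}+\Upsilon_d W^d{}_{abc}$ (in $d=4$, using~\nn{divW}), and the hypersurface connection $\nablab$ acting on a weight-$w$ tangential tensor picks up the usual $\Upsilon^\top$-linear terms. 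The strategy is to compute $\delta_\omega\big(\Chn^\top_{(ab)}\big)$, $\delta_\omega\big(H\Whn_{ab}\big)$ and $\delta_\omega\big(\nablab^c\Whn^\top_{(ab)c}\big)$ separately, each of which is linear in $\Upsilon$ (at first order; one then notes the result is scale-invariant hence holds to all orders, or works with finite $\Omega$ throughout). The first variation produces a term $\Upsilon_d\,\hat n^e W^d{}_{e(ab)}{}^\top$ type expression; the third produces $\Upsilon^c\Whn^\top_{(ab)c}$ plus possibly $\Upsilon^\top$-traces; the second produces $(\hat n.\Upsilon)\Whn_{ab}$. The claim is these cancel identically once one uses the algebraic Bianchi symmetry of $W$ and the decomposition $\Whn_{ab}=\hat n^c\hat n^d W_{c(ab)d}{}^\top$ together with~\nn{tfcm} to re-express $\Wn^\top_{abc}$ via $\IIo$. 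I would organise the cancellation by grouping the $\Upsilon$-terms by index type: those contracting $\Upsilon$ with $\hat n$, those contracting $\Upsilon$ tangentially into a Weyl-type tensor, and pure $\Upsilon^\top$ traces (which must vanish on their own by trace-freeness).

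The hard part will be bookkeeping the tangential-projection and restriction subtleties: $\nablab^c\Whn^\top_{(ab)c}$ involves first projecting $\Whn_{abc}$ tangentially and then applying the \emph{intrinsic} connection, and under a conformal change both the projector (through $\hat n$, which is inert) and the connection change, so one must be careful that $\nablab^c(\Whn^\top)$ is not the same as $(\nabla^c\Whn)^\top$ — the difference is an $\IIo$-times-$W$ term via the Gau\ss\ formula~\nn{II}, and these extra curvature terms are exactly what makes the three-term combination work. An alternative, cleaner route that I would try first: recognise $\Bach_{ab}$ as the hypersurface analogue of the Bach tensor and derive it from the tractor calculus of Section~\ref{tractors} — specifically as (a component of) the conformally invariant operator obtained by applying a hypersurface GJMS-type or BGG operator to the normal tractor, or as the trace-free, divergence-type curvature invariant characterised uniquely by its leading term $\nablab^c\Wn^\top_{abc}$; uniqueness of such an invariant at this weight and order (established by the representation-theoretic counting used elsewhere in the paper) would then pin down the coefficients and make conformal invariance automatic, reducing the lemma to matching the formula against the tractor expression. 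I expect the direct computation to be the fallback and the tractor identification to be the conceptual proof; either way the genuine obstacle is controlling the $\IIo\!\cdot\! W$ and $\hat n.\Upsilon$ terms that appear when commuting projections with the connection.
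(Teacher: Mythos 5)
Your proposal follows essentially the same route as the paper's proof: record the conformal transformation laws of the Cotton tensor, the mean curvature, and the tangential divergence of the $\hat n$-contracted Weyl tensor, decompose $\Upsilon$ into its normal and tangential parts, and check that the inhomogeneous terms cancel pairwise. Two small refinements: the cancellation does not require the Codazzi--Mainardi identity~\nn{tfcm} or any re-expression of $\Whn^\top_{abc}$ via $\IIo$, and the place where $\db=3$ actually enters is that $\nablab^c\Whn^\top_{(ab)c}$ transforms with an inhomogeneous piece $(\db-4)\,\Upsilon^c\Whn^\top_{(ab)c}$, whose coefficient equals $-1$ precisely in this dimension so as to cancel the tangential-$\Upsilon$ part of the Cotton variation --- you should make that dimension-dependent coefficient explicit rather than leaving it as ``plus possibly $\Upsilon^\top$-traces''.
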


\begin{proof}
This result can be established by examining the conformal  transformation properties of the Cotton tensor, mean curvature and the divergence of the Weyl tensor: 
\begin{equation}\label{transform}
\begin{split}
C^{\scriptscriptstyle\Omega^2\! g}_{abc}&=\Omega^{-1}(C_{abc}-W_{abc}{}^d\Upsilon_d)^\top\, ,
\quad H^{\sss\Omega^2\! g}=\Omega^{-1}(H+\Upsilon.\hat n)\, ,\quad\\[1mm]
\big[\nablab^c \Whn^\top_{abc}\big]^{\sss\Omega^2 g}&=
\Omega^{-1}\big(\nablab^c \Whn^\top_{abc}
+(\db-4)\Upsilon^c \Whn^\top_{(ab)c}
+(\db-2) \, \Upsilon^c \Whn^\top_{[ab]c}\big)\, ,
\end{split}
\end{equation}
where $\Upsilon_a := \Omega^{-1}\nabla_a \Omega$.
For the Cotton tensor transformation 
we must decompose $\Upsilon_d=\Upsilon_d^\top+\hat n^d \Upsilon.\hat n$,
the second term of which cancels the term in $\Bach_{ab}^{\sss\Omega^2 g}$ produced by the mean curvature. Setting $\db=3$ and keeping only the symmetric part of the Weyl divergence terms gives the final cancellation.\end{proof}

\begin{remark}
Invariance of the density~$\Bach_{ab}$ for almost Einstein structures follows from \cite[Prop\-osition 4.3]{Goal}, where it was related to the Bach tensor. It also appeared in the construction of a tractor analog of the exterior derivative in~\cite{GLW}. Hence we shall call~$\Bach_{ab}$ the {\it hypersurface Bach tensor}.
\end{remark}

\subsubsection{Invariant operators}

Conformally invariant operators are maps between 
sections conformally weighted tensor bundles.  These play a fundamental role in conformal and hypersurface geometries. Quite generally we will be interested in examples of these whose  target and domain have differing weights and possibly tensor types. There are two main examples that are crucial to following developments.

\begin{proposition}\label{BGGop}
Let $d\geq 3$ and $X^{ab}$ be a rank~2, symmetric, weight $-d=-\db-1$, trace-free hypersurface tensor. The mapping, given in a choice of scale by
\begin{equation}\label{BGG}
X^{ab}\longmapsto L_{ab} X^{ab}:=
\left\{
\begin{array}{cl}
\nablab_a\nablab_bX^{ab}+\bar P_{ab}X^{ab}\, ,&\db\geq 3\, ,\\[2mm]
\nablab_a\nablab_bX^{ab}+P^\top_{ab}X^{ab}+H\IIo_{ab}X^{ab}\, ,&\db=2\, ,
\end{array}
\right.
\end{equation}
determines a conformally invariant operator mapping 
$$
\Gamma(\odot_\circ^2 T\Sigma[-d\, ])\xrightarrow{L_{ab}}
\Gamma(\odot_\circ^2 T\Sigma[- d\, ])\, .
$$
\end{proposition}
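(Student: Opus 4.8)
The plan is to recognise $L$ as the formal adjoint of a standard conformally invariant operator, so that its invariance is automatic and only a short verification remains. Consider the second-order operator
\[
\mathcal D\colon \Gamma(\ce\Sigma[1])\longrightarrow\Gamma(\odot^2_\circ T^*\Sigma[1]),\qquad
\sigma\longmapsto
\begin{cases}
\big(\nablab_a\nablab_b\sigma+\bar P_{ab}\sigma\big)_\circ\,, & \db\geq 3\,,\\[1.5mm]
\big(\nablab_a\nablab_b\sigma+P^\top_{ab}\sigma+H\IIo_{ab}\sigma\big)_\circ\,, & \db=2\,.
\end{cases}
\]
Since $\nablab_a\nablab_b\sigma$ is symmetric (the induced connection on the density line bundle being flat), integrating by parts twice against a symmetric trace-free density $X^{ab}$ (using that $\bar P_{ab}$, respectively $P^\top_{ab}+H\IIo_{ab}$, is symmetric, while the trace of the Hessian is annihilated by $X^{ab}$) yields
\[
\int_\Sigma dA_{\bar g}\;X^{ab}(\mathcal D\sigma)_{ab}\;=\;\int_\Sigma dA_{\bar g}\;\big(L_{ab}X^{ab}\big)\,\sigma\,,
\]
with $L_{ab}X^{ab}$ exactly the expression in \eqref{BGG}. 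The left-hand integrand is a genuine weight-$0$ density precisely when $1+\mathrm{wt}(X^{ab})+\db=0$, i.e.\ when $\mathrm{wt}(X^{ab})=-\db-1=-d$; this is the source of the weight hypothesis in the statement. Hence $L$ is the formal adjoint of $\mathcal D$, and it suffices to prove that $\mathcal D$ is conformally invariant.

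For $\db\geq 3$ the operator $\mathcal D$ is the first operator of the almost-Einstein prolongation on $\Sigma$, whose conformal invariance is classical; I would verify it in the usual two lines. For $\sigma$ of weight $1$ and a rescaling $\bar g\to\widehat{\bar g}=\Omega^2\bar g$ with $\Upsilon_a=\nablab_a\log\Omega$, using $\widehat{\nablab}_a\sigma=\nablab_a\sigma+\Upsilon_a\sigma$ and the transformation law of $\nablab$ on weight-$1$ one-form densities gives
\[
\big(\widehat{\nablab}_a\widehat{\nablab}_b\sigma\big)_\circ=\big(\nablab_a\nablab_b\sigma\big)_\circ+\big(\nablab_a\Upsilon_b-\Upsilon_a\Upsilon_b\big)_\circ\,\sigma\,,
\]
every $\Upsilon_{(a}\nablab_{b)}\sigma$ term dropping out precisely because the weight equals $1$. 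Since also $\widehat{\bar P}_{ab}=\bar P_{ab}-\nablab_a\Upsilon_b+\Upsilon_a\Upsilon_b-\tfrac12\bar g_{ab}\Upsilon^c\Upsilon_c$, adding $\widehat{\bar P}_{ab}\sigma$ and taking the trace-free symmetric part cancels all $\Upsilon$-terms, so $\mathcal D\sigma$ is scale-independent. It is this cancellation that forces the weight $1$ on the domain of $\mathcal D$, hence the weight $-d$ on the domain of $L$.

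The main obstacle is the surface case $\db=2$, where $\bar P$ is pure trace and therefore useless, so one must instead show that $P^\top_{ab}+H\IIo_{ab}$ supplies the same transformation $\big(-\nablab_a\Upsilon_b+\Upsilon_a\Upsilon_b\big)_\circ\,\sigma$. Taking $\Upsilon_a=\Omega^{-1}\nabla_a\Omega$ for an ambient rescaling $g\to\widehat{g}=\Omega^2 g$, I would decompose $\Upsilon_a=\Upsilon^\top_a+\hat n_a(\Upsilon.\hat n)$, use the Gau\ss\ formula \eqref{II} to write $(\nabla_a\Upsilon_b)^\top=\nablab_a\Upsilon^\top_b+\II_{ab}(\Upsilon.\hat n)$, and combine the resulting normal contribution with those of $H^{\widehat{g}}=\Omega^{-1}(H+\Upsilon.\hat n)$ from \eqref{transform} and of the conformal invariance $\IIo^{\widehat{g}}_{ab}=\Omega\,\IIo_{ab}$; the $\II_{ab}(\Upsilon.\hat n)$ pieces cancel and the leftover $\bar g_{ab}$-terms are killed on contraction with the trace-free $X^{ab}$, just as in the proof of Lemma~\ref{hsb}. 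With $\mathcal D$ shown conformally invariant in both parities, the formal-adjoint identity above completes the proof. (Alternatively one can bypass $\mathcal D$ and transform $L_{ab}X^{ab}=\nablab_a\nablab_bX^{ab}+\bar P_{ab}X^{ab}$ directly, exploiting the symmetry and trace-freeness of $X^{ab}$ to discard the unwanted curvature- and $\Upsilon$-terms; this is self-contained but appreciably more laborious.)
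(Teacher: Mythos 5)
Your proposal is correct, and it reaches the result by a genuinely different route from the paper's. The paper disposes of the case $\db\geq 3$ by citing the standard BGG theory of invariant operators, and for $\db=2$ it transforms $L_{ab}X^{ab}$ directly under $g\to\Omega^2 g$ using the transformation laws recorded in~\nn{transform} together with those for the Schouten tensor and weighted tensors. You instead realise $L$ as the formal adjoint, with respect to the conformally invariant pairing forced by the weight $-d$, of the operator $\mathcal D\sigma=(\nablab_a\nablab_b\sigma+\cdots)_\circ$ on weight-one densities --- the first BGG (almost-Einstein) operator when $\db\geq3$, and its extrinsically corrected surface analogue when $\db=2$ --- and verify conformal invariance of $\mathcal D$ instead. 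Both arguments ultimately consume the same transformation identities, but yours buys three things: the $\db\geq3$ case becomes self-contained rather than cited; the computation is carried out on the Hessian of a scalar density, which is lighter than transforming the double divergence of a trace-free tensor density; and the weight hypothesis $\operatorname{wt}(X^{ab})=-d$ is explained (it is exactly the weight making the pairing invariant) rather than imposed. I checked the $\db=2$ cancellation: with $(\nabla_a\Upsilon_b)^\top=\nablab_a\Upsilon^\top_b+\II_{ab}\,(\Upsilon\cdot\hat n)$, the term $-\II_{ab}(\Upsilon\cdot\hat n)$ coming from $\widehat P^\top_{ab}$ and the term $+\IIo_{ab}(\Upsilon\cdot\hat n)$ coming from $\widehat H\widehat{\IIo}_{ab}$ do not cancel outright but leave the pure-trace remainder $-H\bar g_{ab}(\Upsilon\cdot\hat n)$, which the trace-free projection removes, exactly as you indicate; the surviving $-(\nablab_a\Upsilon^\top_b-\Upsilon^\top_a\Upsilon^\top_b)_\circ\,\sigma$ then matches the Hessian's transformation since $\Upsilon^\top$ is the intrinsic $\Upsilon$ of the restricted conformal factor. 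In a final write-up you should state the adjoint identity for compactly supported sections (or compact $\Sigma$) and note that $L^{\bar g}=L^{\Omega^2\bar g}$ then follows from nondegeneracy of the pairing; with that said, the argument is complete.
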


\begin{proof}
In dimensions $\db\geq 3$, 
the result is standard and germane to any conformal geometry from the theory of invariant differential operators  based on BGG~sequences, see for example~\cite{EastwoodSlovak}. When $\db =2$
one can choose a scale, and transform the metric explicitly under 
$g_{ab}\to\Omega^2 g_{ab}$ and then directly verify conformal invariance
using the mean curvature transformation given in Equation~\nn{transform} as well as those for the  Schouten tensor and a weight $w$ 
vector~$v^a$ (which extends to higher rank tensors by linearity):
\begin{equation*}
\begin{split}
\Rho^{\scriptscriptstyle\Omega^2 \!g}_{ab}\ &=\Rho_{ab}^{{\sss g}}-\nabla_{a}\Upsilon_b+\Upsilon_a\Upsilon_b-\frac12 \Upsilon^2 g_{ab}\, ,\\
\nabla_b^{\scriptscriptstyle\Omega^2\!g}v^a
&=\Omega^{w}\big(
\nabla_b^{\sss g} v^a+(w+1)\Upsilon_b v^a -\Upsilon^a v_b+\delta_b^a\Upsilon.v
\big)\, .
\end{split}
\end{equation*}
\end{proof}

Observe that $L^{ab}$ is well-defined when acting on the conformal density $\IIo^2_{(ab)\circ}$; we record an identity for this  required in future developments:
\begin{proposition}\label{LIIo2o}
Along hypersurfaces in a four-manifold one has
$$
\scalebox{.89}{$
L^{ab}\IIo^2_{(ab)\circ}
=\frac 13 (\nablab^a\IIo^{bc})(\nablab_a \IIo_{bc})+
\frac12(\nablab.\IIo_a)(\nablab.\IIo^a)
+\frac23\IIo^{ab}\!\bar\Delta\IIo_{ab}
-\frac23\bar \J K
 - \frac43\IIo^{ab}\nablab^c\Whn{\!}_{abc}^\top+\frac12\Whn^{abc}\Whn{\!}_{abc}^\top$}
\,  .
$$
\end{proposition}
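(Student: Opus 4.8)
The plan is to compute $L^{ab}\IIo^2_{(ab)\circ}$ directly in a choice of scale by expanding the definition \eqref{BGG} of the BGG operator for $\db=3$, namely $L^{ab}X_{ab}=\nablab_a\nablab_b X^{ab}+\bar P_{ab}X^{ab}$ applied to $X_{ab}=\IIo^2_{(ab)\circ}$, and then repeatedly integrating by parts (really: commuting covariant derivatives) and trading curvatures using the hypersurface Gau\ss\ and Codazzi--Mainardi relations from Section~\ref{hypersurfaces}. First I would write $\IIo^2_{(ab)\circ}=\IIo_{ac}\IIo^c{}_b-\tfrac13\bar g_{ab}K$, so that $\nablab_a\nablab_b\IIo^2_{(ab)\circ}=\nablab_a\nablab_b(\IIo^{ac}\IIo_c{}^b)-\tfrac13\bar\Delta K$; expanding the first term via the Leibniz rule produces $(\nablab_a\nablab_b\IIo^{ac})\IIo_c{}^b+2(\nablab_a\IIo^{ac})(\nablab_b\IIo_c{}^b)+(\nablab_b\IIo^{ac})(\nablab_a\IIo_c{}^b)+\IIo^{ac}\nablab_a\nablab_b\IIo_c{}^b$. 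The terms $\IIo^{ac}\nablab_a\nablab_b\IIo_c{}^b$ and $(\nablab_a\nablab_b\IIo^{ac})\IIo_c{}^b$ I would reorganize: using the symmetry of $\IIo$ and commuting the two derivatives, $\nablab_a\nablab_b\IIo^{ac}=\nablab_b(\nablab_a\IIo^{ac})+[\nablab_a,\nablab_b]\IIo^{ac}=\nablab_b(\nablab.\IIo^c)+\text{(curvature terms)}$, and similarly $\nablab_a\nablab_b\IIo_c{}^b=\nablab_a(\nablab.\IIo_c)+\text{(curvature)}$; the curvature terms here are intrinsic Riemann terms $\bar R$ on $\Sigma$, which via the Fialkow--Gau\ss\ equation \eqref{Fialkow} and the $d=4$ Weyl relation following it get rewritten in terms of $\IIo^2$, $K$, $\bar P$, $\bar\J$ and the tangential Weyl $\Whn$.

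Second, I would collect the genuinely ``divergence of $\IIo$'' pieces into $(\nablab.\IIo_a)(\nablab.\IIo^a)$, and the genuinely ``gradient of $\IIo$'' pieces into $(\nablab^a\IIo^{bc})(\nablab_a\IIo_{bc})$; the cross term $(\nablab_b\IIo^{ac})(\nablab_a\IIo_c{}^b)$ is not of either form and must be handled by the trace-free Codazzi--Mainardi identity \eqref{tfcm}, which says $\Whn^\top_{abc}=2\nablab_{\{[c}\IIo_{b]a\}\circ}$, i.e. the antisymmetric-in-two-indices part of $\nablab\IIo$ is the tangential Weyl tensor (plus trace terms that are controlled by $\nablab.\IIo$). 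Writing $\nablab_b\IIo_{ac}=\nablab_{(b}\IIo_{a)c}+\nablab_{[b}\IIo_{a]c}$ and feeding the antisymmetric part through \eqref{tfcm}, the cross term becomes a combination of $|\nablab\IIo|^2$, $|\nablab.\IIo|^2$, $\Whn^{abc}\Whn^\top_{abc}$ and a contraction $\IIo^{ab}\nablab^c\Whn^\top_{abc}$-type term (the last arising when a $\nablab$ hits the Weyl in an integration by parts). I would also need the term $\IIo^{ab}\bar\Delta\IIo_{ab}$: this is produced naturally from $\IIo^{ac}\nablab_a\nablab_b\IIo_c{}^b$ after commuting derivatives to turn $\nablab_a\nablab_b$ into $\nablab_b\nablab_a=\nablab^2$ acting on the symmetric tensor, modulo curvature commutators. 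Throughout, $\bar\Delta K=\bar\Delta\tr\IIo^2=2\IIo^{ab}\bar\Delta\IIo_{ab}+2|\nablab\IIo|^2$ relates the $\tfrac13\bar\Delta K$ piece to the target expression.

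Third, I would add the algebraic term $\bar P_{ab}\IIo^2_{(ab)\circ}=\bar P^{ab}\IIo^2_{ab}-\tfrac13\bar\J K$; the Schouten trace $-\tfrac13\bar\J K$ combines with a $-\tfrac13\bar\J K$ produced above from the curvature reorganization to give the stated $-\tfrac23\bar\J K$, while $\bar P^{ab}\IIo^2_{ab}$ must cancel against matching curvature contractions coming from the derivative commutators and from eliminating $\bar R$ via Fialkow--Gau\ss\ — this cancellation is the consistency check that the operator is well-defined on $\IIo^2_{(ab)\circ}$ (Proposition~\ref{BGGop} guarantees it is). Finally I would compare the assembled expression with the right-hand side of the Proposition and verify the coefficients $\tfrac13$, $\tfrac12$, $\tfrac23$, $-\tfrac23$, $-\tfrac43$, $\tfrac12$.

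\textbf{Main obstacle.} The main difficulty is bookkeeping the curvature commutators: every time two covariant derivatives are swapped, one picks up intrinsic Riemann terms on $\Sigma$ contracted into $\IIo$ and into $\nablab\IIo$, and these must be systematically pushed through the $d=4$ Gau\ss\ identities to re-express $\bar R$ (hence $\bar\J$, $\bar P$) in terms of ambient-conformal data ($\IIo^2$, $K$, $\Whn$, $W$). The most delicate single step is the cross term $(\nablab_b\IIo^{ac})(\nablab_a\IIo_c{}^b)$: one must correctly split off its skew part via \eqref{tfcm} without double-counting the trace contributions already assigned to $(\nablab.\IIo)^2$, and track exactly how many factors of $\IIo^{ab}\nablab^c\Whn^\top_{abc}$ are generated — getting the coefficient $-\tfrac43$ right is where an error is most likely to creep in. A useful sanity check at the end is to verify conformal invariance of both sides independently (the left by Proposition~\ref{BGGop}, the right by inspection using the transformation laws \eqref{transform}), which pins down the otherwise error-prone numerical coefficients.
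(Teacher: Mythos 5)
Your plan is essentially the paper's proof: expand the double divergence of $\IIo^2_{(ab)\circ}$, feed the skew part of $\nablab\IIo$ through the trace-free Codazzi--Mainardi identity \eqref{tfcm} (this is exactly how the paper generates the $\IIo^{ab}\nablab^c\Whn{}^\top_{abc}$ and $\Whn^{abc}\Whn{}^\top_{abc}$ terms, via the identity \eqref{nemo}), and let the intrinsic Schouten contributions cancel against the $\bar P_{ab}X^{ab}$ piece of $L^{ab}$. The only difference is that for the commutator terms the paper takes a shortcut you could adopt: since $\Sigma$ is three-dimensional its intrinsic Weyl tensor vanishes, so $\IIo^{ac}[\nablab_a,\nablab_b]\IIo_c{}^b$ yields only $\bar P$ and $\bar\J$ terms directly, with no need to route through the Fialkow--Gau\ss\ equation.
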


\begin{proof}
The proof is a combination of basic tensor algebra and repeated usage of the trace-free Codazzi--Mainardi Equation~\nn{tfcm} applied to the double divergence of $\IIo^2_{(ab)\circ}$.  We record the main steps below:
\begin{equation*}
\begin{split}
\nablab^a\nablab^b&\IIo^2_{(ab)\circ}=
2\nablab^a(\IIo_{c(a}\nablab^b\IIo^{c}_{b)})-\frac13\bar\Delta K\\[1mm]
=&\,(\nablab.\IIo_a)(\nablab.\IIo^a)+2\IIo^{ab}\nablab^c\nablab_a\IIo_{bc}+(\nablab_a\IIo_{bc})(\nablab^b\IIo^{ac})+\IIo^{ac}[\nablab_a,\!\nablab_b]\IIo_{c}^b\!-\frac13\bar\Delta K\\[1mm]
=&\,\frac13(\nablab^a\IIo^{bc})(\nablab_a\IIo_{bc})
+\frac12(\nablab.\IIo_a)(\nablab.\IIo^a)
-\frac 23\IIo^{ab}\bar\Delta\IIo_{ab}
+(\nablab^a\IIo^{bc})\Whn^\top_{cab}\\&
+2\IIo^{ab}\nablab^c\nablab_a\IIo_{bc}
+\IIo^{ac}[\nablab_a,\!\nablab_b]\IIo_{c}^b\\[1mm]
=&\, \frac13(\nablab^a\IIo^{bc})(\nablab_a\IIo_{bc})
+\frac12(\nablab.\IIo_a)(\nablab.\IIo^a)
+\frac 23\IIo^{ab}\bar\Delta\IIo_{ab}
+\frac12\Whn^{cab}\Whn^\top_{abc}\\&
-\frac43\IIo^{ab}\nablab^c\Whn^\top_{abc}
+\frac13\IIo^{ac}[\nablab_a,\!\nablab_b]\IIo_{c}^b\, .
\end{split}
\end{equation*}
The third line was achieved  using  identity
\begin{equation}\label{nemo}
\IIo^{ab}\nablab_a\nablab.\IIo_b=
\frac23\, \IIo^{ab}\bar\Delta\IIo_{ab}-\frac23\bar \J \, K -2\tr (\IIo^2\bar P)-\frac23 \IIo^{ab}\nablab^c \Wn_{bac}^\top\, ,
\end{equation}
which can be proved by using
Equation~\nn{tfcm} to rewrite $\nablab^c \nablab_a \IIo_{bc}=\bar \Delta \IIo_{ab}-\nablab_b \nablab. \IIo_{a}+\mbox{\it lower order terms}$.
In three dimensions, the Weyl tensor vanishes so the last term in the computation of~$\nablab^a\nablab^b\IIo^2_{(ab)\circ}$ displayed above Equation~\nn{nemo} can only contribute intrinsic Schouten and scalar curvature terms; these are precisely $-\bar\Rho^{ab}\IIo^2_{(ab)\circ}-\frac23\bar \J K$ and this completes the proof.
\end{proof}

The second conformally invariant hypersurface operator needed implements the Robin 
 combination of Neumann and Dirichlet boundary conditions and was shown to be conformally invariant by Cherrier: 
 \begin{definition}\label{ROBIN}
 Given   a weight~$w$ conformal density 
 $f\in\Gamma(\ce M[w])$, we define the {\it (conformal) Robin} operator
  \begin{equation}\label{Robin}
 \delta_{\hat n}  f :=\nabla_{\hat n}f\big|_\Sigma - wH \cdot f\big|_\Sigma\, .
\end{equation}
\end{definition}

\begin{remark}
The operation $(\nabla_{\hat n}-\frac{w}{d-1}\nabla.\hat n)f$ is the Lie derivative
of the density $f$ along the unit  normal vector $\hat n$, so by construction $\delta_{\hat n} f$ is a weight $w-1$ conformal hypersurface density. 
\end{remark}

The Robin operator can be extended to also act on conformal tensor densities, in what follows we will need the case of a rank two, symmetric conformal tensor density:

\begin{lemma}
Suppose $X_{ab}\in \Gamma(\odot^2T^*M[w])$ obeys $\hat n.X_b|_\Sigma=0$ 
then one has a conformally invariant operator given here in a scale $g\in \cc$ 
\begin{equation}\label{deltaRab}
\delta_{\hat n} X_{ab}:=\big(\nabla_{\hat n} X_{ab} - (w-2)H X_{ab}\big)^{\!\!\top}\big|_\Sigma\, .\end{equation}
This  defines  a section  of $\Gamma(\odot^2 T^*\Sigma[w-1])$ in the  corresponding induced scale $\bar g\in \bar \cc$. 
\end{lemma}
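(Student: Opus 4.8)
The plan is to verify directly that the right-hand side of~\nn{deltaRab}, computed in two conformally related scales $g$ and $\Omega^2 g$, transforms correctly. Writing $\Upsilon_a:=\Omega^{-1}\nabla_a\Omega$ and $\bar\Omega:=\Omega|_\Sigma$, what has to be shown is that the scale-$\Omega^2 g$ expression equals $\bar\Omega^{\,w-1}$ times the scale-$g$ expression along $\Sigma$; this is exactly the relation between representatives of a weight $w-1$ hypersurface density in the induced scales $\bar g$ and $\bar\Omega^2\bar g$. Symmetry of the output is immediate, since $X_{ab}$ is symmetric and each of $\nabla_{\hat n}$, multiplication by $H$, the projection $\top$, and restriction to~$\Sigma$ preserves symmetry; and the $\top$-projection forces the result to be a genuine section of $\odot^2 T^*\Sigma[w-1]$ (rather than of $\otimes^2T^*\Sigma[w-1]$). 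That the operator is well defined is also clear, as $\nabla_{\hat n}X_{ab}|_\Sigma=\hat n^c\nabla_cX_{ab}|_\Sigma$ depends only on $\hat n|_\Sigma$ and the $1$-jet of $X_{ab}$ along~$\Sigma$.

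The transformation laws I would collect first are: the mean curvature rule $H^{\sss\Omega^2 g}=\Omega^{-1}(H+\Upsilon.\hat n)$ of~\nn{transform}; the rescaling $\hat n_a^{\sss\Omega^2 g}=\Omega\hat n_a$ of the Riemannian unit conormal and hence $\hat n^{a,\sss\Omega^2 g}=\Omega^{-1}\hat n^a$ of the unit normal vector; and the transformation under $g\mapsto\Omega^2 g$ of the Levi-Civita connection coupled to $\ce M[w]$ on a rank-two covariant density,
\begin{equation*}
\nabla_c^{\sss\Omega^2 g}X_{ab}=\nabla_c X_{ab}+(w-2)\Upsilon_c X_{ab}-\Upsilon_a X_{cb}-\Upsilon_b X_{ac}+g_{ca}\Upsilon^d X_{db}+g_{cb}\Upsilon^d X_{ad}\,,
\end{equation*}
obtained from the Christoffel difference $\widehat\Gamma^c_{ab}-\Gamma^c_{ab}=\delta^c_a\Upsilon_b+\delta^c_b\Upsilon_a-g_{ab}\Upsilon^c$ together with the weight-$w$ density rule $\nabla^{\sss\Omega^2 g}_c\phi=\nabla_c\phi+w\Upsilon_c\phi$. (The scalar case of that rule reproduces the coefficient $w$ of Definition~\ref{ROBIN}; the shift to $w-2$ here is what forces the coefficient in~\nn{deltaRab}.)

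Next I would substitute, using that the scale-$\Omega^2 g$ representative of $X_{ab}$ is $\Omega^w X_{ab}$. Contracting the displayed connection transformation with $\hat n^{a,\sss\Omega^2 g}=\Omega^{-1}\hat n^a$ and multiplying through by $\Omega^w$ produces
\[
\Omega^{w-1}\big(\hat n^c\nabla_cX_{ab}+(w-2)(\Upsilon.\hat n)X_{ab}-\Upsilon_a\,\hat n^cX_{cb}-\Upsilon_b\,\hat n^cX_{ca}+\hat n_a\Upsilon^dX_{db}+\hat n_b\Upsilon^dX_{ad}\big)\,,
\]
while $-(w-2)H^{\sss\Omega^2 g}\Omega^wX_{ab}=\Omega^{w-1}\big(-(w-2)HX_{ab}-(w-2)(\Upsilon.\hat n)X_{ab}\big)$. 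Three cancellations then occur: the $(\Upsilon.\hat n)X_{ab}$ terms cancel; the terms $\Upsilon_a\hat n^cX_{cb}$, $\Upsilon_b\hat n^cX_{ca}$ vanish on restriction to $\Sigma$ because $\hat n.X_b|_\Sigma=0$ by hypothesis; and the terms $\hat n_a\Upsilon^dX_{db}$, $\hat n_b\Upsilon^dX_{ad}$, each carrying a free conormal index, are annihilated by the $\top$-projection. What remains is $\Omega^{w-1}\big(\nabla_{\hat n}X_{ab}-(w-2)HX_{ab}\big)^{\!\top}\big|_\Sigma=\bar\Omega^{\,w-1}\,\delta_{\hat n}X^{\sss g}_{ab}$, the required transformation law.

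The main obstacle is simply keeping conformal weights straight: that $g\mapsto\Omega^2 g$ rescales the scale $\tau$ (and the unit normal vector) by $\Omega^{-1}$, and that the connection transformation on a rank-two covariant density carries the coefficient $(w-2)\Upsilon_c$. Once these are pinned down, the three cancellations above finish the argument with no further computation.
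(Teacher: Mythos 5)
Your proposal is correct and follows essentially the same route as the paper: both verify conformal invariance directly by combining the mean curvature transformation from~\nn{transform} with the transformation law for $\nabla_{\hat n}X_{ab}$ on a weight-$w$ rank-two density, and both exploit the same three cancellations (the $\Upsilon.\hat n\, X_{ab}$ terms against the $H$ term, the $\Upsilon_{(a}\hat n.X_{b)}$ terms via the hypothesis $\hat n.X_b|_\Sigma=0$, and the $\hat n_{(a}\Upsilon.X_{b)}$ terms via the $\top$-projection). The only difference is that you derive the connection transformation from the Christoffel difference rather than quoting it, which is a matter of presentation, not of method.
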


\begin{proof}
This is a member of a general class of results that can be obtained 
from the Laplace-Robin operator discussed in Section~\ref{LRO}. A simple proof of the statement uses the conformal transformation property of the mean curvature given in Equation~\nn{transform} and that for the normal derivative of a rank two tensor density:
$$
\nabla^{\sss \Omega^2 g}_{\hat n} X_{ab}=
\Omega^{w-1}\Big(
\nabla_{\hat n} X_{ab}+(w-2)\Upsilon.\hat n X_{ab}-
2\Upsilon_{(a}\hat n.X_{b)}
+2\hat n_{(a}\Upsilon.X_{b)}\Big)\, .
$$
Since $\hat n.X_b|_\Sigma=0$, and the formula for $\delta_{\hat n}$ projects onto the tangential part of $\nabla_{\hat n} X_{ab}$, only the first two terms on the right hand side above survive, and the result thus follows. 
\end{proof}

\section{Defining function variational calculus}\label{dfvc}

To handle hypersurface variational problems
we  treat hypersurfaces in terms of defining functions.
Locally any hypersurface is the zero locus of  some defining function, so no generality is lost studying hypersurfaces $\Sigma={\mathcal Z(s)}$ for some defining function~$s$. For simplicity we take~$M$ oriented with volume form~$\omega$; this and the conormal $\dd s$ determine the orientation of $\Sigma$. We also shall assume, again mostly for simplicity, that $\Sigma$ is compact.  

Hypersurface invariants
are defined as follows (see~\cite[Definition 2.6]{GW15}):
\begin{definition}\label{R-invtdef}
A {\em scalar Riemannian pre-invariant}  is a mapping~$\Pre$ which assigns a smooth function $\Pre(s;g)$ to  each Riemannian~$d$-manifold
$(M,g)$ and  hypersurface defining function~$s$, satisfying: \\ 
\begin{enumerate}
\item[(i)] $\Pre(s; g)$ is natural;
for any diffeomorphism $\phi:M\to M$ we have $$\Pre(\phi^* s,\phi^* g ) =
\phi^* \Pre(s; g)\, .$$
\item[(ii)] If~$s'=vs$ for some smooth, positive function~$v$  then $$\Pre(s; g)|_\Sigma= \Pre(vs;
g)|_\Sigma\, ,\mbox{ where }\Sigma={\mathcal Z(s)}\, .
$$
\item[(iii)] $\Pre$ is given by a universal polynomial expression such that, 
given a local coordinate system~$(x^a)$ on~$(M,g)$,~$\Pre(s,u; g)$ is given by
a polynomial in the variables
$$g_{ab},~\partial_{a_1}g_{bc}, ~\cdots, ~\partial_{a_1}\partial_{a_2}\cdots \partial_{a_k}
g_{bc},~(\det g)^{-1},\omega_{a_1\ldots a_d}\, ,$$
$$
s,~\partial_{b_1} s,~\cdots~,\partial_{b_1}\partial_{b_2}\cdots \partial_{b_\ell} s, 
~| \boldsymbol{d} s |_g^{-1}, $$ for some positive integers~$k,\ell$. Here $\partial_a$ means~$\partial/\partial x^a$,
$g_{ab}=g(\partial_a,\partial_b)$,~$\det g= \det(g_{ab})$ and $\omega_{a_1\ldots a_d}=\omega(\partial_{a_1},\ldots,\partial_{a_d})$. 
\end{enumerate}
A {\em  scalar Riemannian invariant} of a hypersurface~$\Sigma$ is the
restriction~$\bar \Pre(\Sigma;g):=
\Pre(s;g)|_\Sigma$ of a  
pre-invariant~$\Pre(s;g)$ to~$\Sigma$.
\end{definition}

\begin{remark}
A hypersurface invariant $\bar \Pre$ depends only on the data $(M,g,\Sigma)$.
 For point~(i) note that if~$\Sigma=\mathcal{Z}(s)$, then~$\phi^{-1} (\Sigma)$
is a hypersurface with defining function~$\phi^* s$. 
In (ii), the requirement $s'=vs$ means that $s'$ and $s$ are
are two
compatibly oriented defining functions with 
$\mathcal{Z}(s)=\mathcal{Z}(s')=:\Sigma$.
In~(iii), since we are ultimately interested in hypersurface invariants, there is no loss of generality studying defining functions such that  $| \boldsymbol{d} s |_g^{-1}\neq 0$ 
everywhere in $M$, since we may, if necessary replace $M$ by a local neighborhood of $\Sigma$.


The 
definition  extends to  {\em tensor-valued 
  hypersurface pre-invariants} and {\em invariants} by considering
 tensor-valued functions~$\Pre$ and requiring the
coordinate components of the image satisfy  conditions~(ii),~(iii), and the obvious adjustment of (i).  The term {\em hypersurface invariant} will be taken to mean either
tensor or scalar valued hypersurface invariants.
\end{remark}

Local Riemannian 
invariants, constructed in the usual way, provide 
trivial examples of  hypersurface preinvariants. Examples of preinvariants that depend non-trivially on both $s$ and $g$ are given below:

\begin{example}
Given a hypersurface defining function~$s$, 
 examples of preinvariants  are 
$$
\Pre_a(s;g)=\frac{\nabla_a s}{|\nabla s|} \ \mbox{ and } \ 
\Pre_{ab}(s;g)=\nabla_{a}\left(\frac{\nabla_b s}{|\nabla s|}\right)
-\frac{\nabla_a s}{|\nabla s|}
\frac{\nabla^c s}{|\nabla s|}\  \nabla_c\left(\frac{\nabla_b s}{|\nabla s|}\right)
\, .
$$
Upon restriction to $\Sigma={\mathcal Z}(s)$,  these give the unit conormal and second fundamental form hypersurface invariants.
 It is important to note that for distinguished choices of defining function~$s$, these expressions simplify considerably. For example, if $s$ obeys $|\nabla s|=1$, then the second fundamental form is $\II_{ab}=(\nabla_a \nabla_b s)|_\Sigma$, but the quantity $\nabla_a \nabla_b s$ is, of course, {\it not} a preinvariant.
\end{example}

\medskip

We now consider integrals $\int_\Sigma dA_{\bar g}\, \raisebox{.5mm}{\scalebox{.6}{$\bullet$}}$ over a hypersurface, where $dA_{\bar g}$ is the volume element of the induced metric $\bar g_{ab}$. Let~$s$ be a defining function for $\Sigma$. Then, since (see Equation~\nn{induced})
$$
\bar g = \Big(g - \frac{\dd s\otimes \dd s}{|\dd s|^2}\Big)\Big|_\Sigma\, ,
$$
choosing coordinates 
$(x^a)=(s,y^i)$ where $g(ds,dy^i)|_\Sigma=0$, it follows that
$$
\sqrt{\det g\, }\, \Big|_\Sigma=
\frac{\,\sqrt{\det \bar g^{^{\phantom{\!o}}}}\ }{  \; \; \;  |{\bm d} s|\big|_\Sigma}\, .
$$ 
   Thus we have the following:
\begin{proposition}
Let $\Sigma$ be a smoothly embedded, compact hypersurface in $M$, with defining function~$s$,  and let 
$\bar \Pre$ be a hypersurface invariant for $\Sigma$ with preinvariant $\Pre(s;g)$.
Then 
\begin{equation}\label{holintegral}
\int_\Sigma dA_{\bar g}\, \bar \Pre\, = 
\int _M dV_g  \, |\nabla s|\,  {\bm \delta}(s) \, \Pre(s;g)\, ,
\end{equation}
where $\bar g$ is the pullback  
of the metric $g$  to $\Sigma$.  
\end{proposition}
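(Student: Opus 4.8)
The plan is to reduce the hypersurface integral to a bulk integral over $M$ by introducing a Dirac delta function concentrated on $\Sigma = {\mathcal Z}(s)$, using the coarea/layer-cake formula in the coordinates adapted to $s$. First I would recall that for a compact hypersurface $\Sigma = {\mathcal Z}(s)$ with ${\bm d}s$ nonvanishing along $\Sigma$, one may shrink $M$ so that $|{\bm d}s|$ is nowhere zero; then the level sets $\{s = c\}$ for $c$ near $0$ foliate a neighborhood of $\Sigma$, and ${\bm \delta}(s)$ makes sense as a distribution supported on $\Sigma$. Working in coordinates $(x^a) = (s, y^i)$ with $g({\bm d}s, {\bm d}y^i)|_\Sigma = 0$, the volume element splits as $dV_g = \sqrt{\det g}\, ds\, d^{\bar d}y$, and the defining property $\int_M f(x)\, {\bm \delta}(s)\, dV_g = \int_{\Sigma} f|_\Sigma\, \sqrt{\det g}\,\big|_\Sigma\, d^{\bar d}y$ holds by definition of the pushforward of $dV_g$ along $s$.

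Next I would substitute the already-derived pointwise identity
$$
\sqrt{\det g\,}\,\Big|_\Sigma = \frac{\sqrt{\det \bar g}}{\,|{\bm d}s|\big|_\Sigma\,}\,,
$$
which comes from $\bar g = (g - {\bm d}s \otimes {\bm d}s / |{\bm d}s|^2)|_\Sigma$ together with the block structure of $g_{ab}$ in the $(s, y^i)$ coordinates (the $s$-$s$ entry being $|{\bm d}s|^{-2}$ after the orthogonality choice, the $y$-$y$ block being $\bar g_{ij}$). Hence
$$
\int_M dV_g\, |\nabla s|\, {\bm \delta}(s)\, \Pre(s; g) = \int_\Sigma d^{\bar d}y\, |\nabla s|\big|_\Sigma \cdot \frac{\sqrt{\det \bar g}}{|{\bm d}s|\big|_\Sigma}\, \Pre(s; g)\big|_\Sigma = \int_\Sigma dA_{\bar g}\, \Pre(s;g)\big|_\Sigma\,,
$$
since $|\nabla s| = |{\bm d}s|$ and the two factors cancel, while $dA_{\bar g} = \sqrt{\det \bar g}\, d^{\bar d}y$ is the induced volume element. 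Finally, by Definition~\ref{R-invtdef}, $\bar\Pre = \Pre(s;g)|_\Sigma$, giving the claimed equality~\nn{holintegral}.

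The only subtlety — and what I would treat as the main point requiring care rather than a genuine obstacle — is justifying that the right-hand side is independent of the choices made: the coordinate system $(s, y^i)$, and more importantly the choice of defining function $s$ within its equivalence class $s' = vs$. Independence of coordinates is immediate because both sides are coordinate-free once interpreted as distributional integrals. Independence of the choice of $s$ follows from property~(ii) of Definition~\ref{R-invtdef}, namely $\Pre(vs; g)|_\Sigma = \Pre(s;g)|_\Sigma$, together with the fact that the left-hand side $\int_\Sigma dA_{\bar g}\, \bar\Pre$ manifestly depends only on $(M, g, \Sigma)$; one checks that the bulk integrand $|\nabla s|\, {\bm \delta}(s)$ transforms correctly under $s \mapsto vs$ (the Jacobian factor from ${\bm \delta}(vs) = v^{-1}{\bm \delta}(s)$ along $\Sigma$ is compensated by the change in $|\nabla(vs)| = v|\nabla s|$ along $\Sigma$, where $s = 0$). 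With these remarks the proof is a short computation; I would present it in essentially the three displayed lines above, prefaced by the reduction to adapted coordinates.
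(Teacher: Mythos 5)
Your argument is correct and is essentially the paper's own proof: the identity is obtained from the splitting of $dV_g$ in the adapted coordinates $(s,y^i)$ with $g({\bm d}s,{\bm d}y^i)|_\Sigma=0$, the resulting determinant relation $\sqrt{\det g}\,|_\Sigma=\sqrt{\det\bar g}/|{\bm d}s|\,|_\Sigma$, and the cancellation against the explicit factor $|\nabla s|$. Your closing remark on independence of the choice of defining function is also the content of the paper's Remark on $|\nabla(vs)|{\bm\delta}(vs)=|\nabla s|{\bm\delta}(s)$, so nothing is missing.
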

In the above 
${\bm \delta}(s)$ is the Dirac delta distribution. The above formula is   standard, see for example~\cite{Osher} for its use in the theory of implicit surfaces. This has the advantage of allowing us to perform surface integrals over hypersurface invariants $\bar \Pre$ in terms of  (often far simpler) extensions to $M$. 

\begin{remark}\label{predist}
The distributional identities $x{\bm\delta}(x)=0$ and  $
{\bm \delta}(f(x))=\sum_{x_0\in{\mathcal Z}(f)} \frac{\scalebox{.7}{$\bm \delta$}(x-x_0)}{|f'(x_0)|}$ for functions $f$ with simple roots,
imply $|\nabla(vs)|{\bm \delta(vs)}=|\nabla s| {\bm \delta}(s)$. Thus, the integrand 
$|\nabla s| {\bm \delta}(s)\Pre(s;g)$ on the right hand side of~\nn{holintegral} can be loosely viewed as distributional hypersurface preinvariant.
\end{remark}

\medskip

Moving now to 
variational considerations, let 
 $\bar \Pre$ be a hypersurface invariant. The variation of the 
functional $I(\Sigma)=\int_\Sigma dA_{\bar g} \, \bar \Pre$, 
with respect to the embedding of the hypersurface $\Sigma$,
is defined by
\begin{equation}\label{vary}
\delta I:=\frac{dI(\Sigma_t)}{dt}\Big|_{t=0}=\frac{d}{dt} \int_{\Sigma_t} dA_{\bar g_t}\,  \bar \Pre(\Sigma_t;g)\, \Big|_{t=0}\, .
\end{equation}
Here $\Sigma_t$ denotes a smooth, one parameter family of hypersurfaces such that~$\Sigma_0=\Sigma$ and~$\Sigma_t = \Sigma$ outside a compactly supported subset of $M$. Also,~$\bar g_t$ is the pullback of the metric~$g$ to~$\Sigma_t$.

Our  strategy is, without loss of generality,  to consider  a family of hypersurfaces $\Sigma_t={\mathcal Z}(s_{_t})$ 
where $s_{_t}$ is the family of defining functions
\begin{equation}\label{st}
s_{_t}:=s+tu
\end{equation}
for some smooth, compactly supported $ \delta s:=u\in C^\infty \!M$.
Thus $I(\Sigma_t)=I({\mathcal Z}(s_t))$.
In general we will use~$\delta$ to denote the operation $f(s)\mapsto \big(\frac{df(s+tu)}{dt} \big)\big|_{t=0}=:\delta f$.

Using Equation~\nn{holintegral}
the variational formula~\nn{vary} becomes
$$
\delta I = \frac{d}{dt} \int_{M} dV_g\, 
|\nabla s_{_t}| \, {\bm \delta}(s_{_t}) \Pre(s_{_t};g)\, \Big|_{t=0}\, .
$$
This reformulation leads to the following Lemma which is needed for our key variational Theorem~\ref{VARY}, and is also a useful computational tool:

\begin{lemma}\label{vary_dA}
Let $\Pre$ be a hypersurface preinvariant with corresponding hypersurface invariant~$\bar \Pre$.
Consider a 
  compactly supported variation~$\delta s$  of a  defining function $s$ for a hypersurface~$\Sigma$. Then
\begin{equation}\label{blabla}
\delta \left(\int_\Sigma dA_{\bar g}  \, \bar \Pre\right)=
\int_M dV_g\,  |\nabla s|{\bm \delta}(s)\, \Big[ \delta \Pre - \frac{\delta s}{|\nabla s|}(\nabla_{\hat n} + \nabla.\hat n) \Pre\Big]\, ,
\end{equation} 
where $\hat n:=(\nabla s)/|\nabla s|$.
\end{lemma}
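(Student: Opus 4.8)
The plan is to push the computation through the bulk reformulation~\nn{holintegral}. Taking the variation family $\Sigma_t={\mathcal Z}(s_{_t})$ with $s_{_t}=s+tu$ as in~\nn{st}, and inserting~\nn{holintegral} into the definition~\nn{vary}, we obtain
$$
\delta\Big(\int_\Sigma dA_{\bar g}\,\bar\Pre\Big)=\int_M dV_g\,\frac{d}{dt}\Big[|\nabla s_{_t}|\,{\bm \delta}(s_{_t})\,\Pre(s_{_t};g)\Big]\Big|_{t=0}\, ,
$$
since $dV_g$ does not depend on $t$. First I would apply the Leibniz rule to the bracket, producing three terms: the variation of $|\nabla s|$, the variation of ${\bm \delta}(s)$, and the variation of $\Pre$. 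The third, $\int_M dV_g\,|\nabla s|\,{\bm \delta}(s)\,\delta\Pre$, is already the first term in~\nn{blabla}; all the remaining work lies in combining the other two.

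The variation of $|\nabla s|$ is immediate: differentiating $|\nabla s|^2=g^{ab}\nabla_as\,\nabla_bs$, with $g$ (hence $\nabla$) held fixed and $\delta(\nabla_as)=\nabla_au$, gives $\delta|\nabla s|=|\nabla s|^{-1}\nabla^as\,\nabla_au=\nabla_{\hat n}u$. For the distributional term I would use the chain rule $\nabla_a[{\bm \delta}(s)]={\bm \delta}'(s)\,\nabla_as$, contracted with $\hat n^a=\nabla^as/|\nabla s|$, to get ${\bm \delta}'(s)=|\nabla s|^{-1}\hat n^a\nabla_a[{\bm \delta}(s)]$; since the variation of ${\bm \delta}(s)$ is $u\,{\bm \delta}'(s)$, this term equals $\int_M dV_g\,u\,\Pre\,\hat n^a\nabla_a[{\bm \delta}(s)]$ (the two factors of $|\nabla s|$ cancelling). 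Integrating by parts via the divergence theorem — with no boundary contribution, as $u$ is compactly supported and $M$ may be taken to be a tubular neighbourhood of $\Sigma$ — turns this into $-\int_M dV_g\,{\bm \delta}(s)\,\nabla_a(u\,\Pre\,\hat n^a)$, and expanding $\nabla_a(u\,\Pre\,\hat n^a)=(\nabla_{\hat n}u)\Pre+u\,\nabla_{\hat n}\Pre+u\,\Pre\,\nabla.\hat n$ exhibits precisely the structure needed.

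Assembling the pieces, the $(\nabla_{\hat n}u)\Pre$ term from the integration by parts cancels the contribution $\delta|\nabla s|\cdot\Pre=(\nabla_{\hat n}u)\Pre$ from the first Leibniz term, leaving $-\int_M dV_g\,{\bm \delta}(s)\,u\,(\nabla_{\hat n}+\nabla.\hat n)\Pre$; adding back the $\delta\Pre$ term and recalling $u=\delta s$ yields exactly~\nn{blabla}. The one delicate point — which I expect to be the only real obstacle — is justifying the differentiation of the distribution-valued integrand and the distributional integration by parts. This is handled by working in adapted coordinates $(x^a)=(s,y^i)$ in a neighbourhood of $\Sigma$ (available since ${\bm d}s\neq 0$ there), in which ${\bm \delta}(s)$ and ${\bm \delta}'(s)$ reduce to the ordinary one-dimensional Dirac distribution and its derivative in $x^0=s$, so that everything reduces to the one-variable identities recalled in Remark~\ref{predist} together with ordinary integration by parts against ${\bm \delta}'$.
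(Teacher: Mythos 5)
Your proposal is correct and follows essentially the same route as the paper: Leibniz rule on the bulk integrand $|\nabla s|\,{\bm\delta}(s)\,\Pre$, the identity $\nabla_{\hat n}{\bm\delta}(s)=|\nabla s|\,{\bm\delta}'(s)$, and an integration by parts along $\hat n$ whose $(\nabla_{\hat n}\delta s)\Pre$ term cancels the variation of $|\nabla s|$. The paper packages the compact-support/boundary issue with a cutoff function $\chi$ rather than shrinking $M$ to a tubular neighbourhood, but this is only a cosmetic difference.
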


\begin{proof}
Let $\chi$ be a (fixed) compactly supported cutoff function taking the value 1 on an open  neighborhood of the support of $\delta s$. Then
\begin{equation}
\begin{split}
\delta \left(\int_\Sigma dA_{\bar g}\bar  \Pre\right)&=\delta \left(\int_M dV_g\, \chi\,|\nabla s|\,  {\bm \delta}(s) \,   \Pre\right)\\[1mm]
&=\int_M dV_{g}\, \chi\, \Big[
(\nabla_{\hat n} \delta s) {\bm \delta}(s) \,   \Pre
+|\nabla s|\, {\bm \delta}'(s)\, \delta s\, \Pre
+|\nabla s|\, {\bm \delta}(s)\, \delta  \Pre\Big]\\[1mm]
&=\int_M dV_{g}\, \chi\, \Big[
-(\nabla.{\hat n}) \delta s\,  {\bm \delta}(s) \,   \Pre
-{\bm \delta}(s)\, \delta s\, \nabla_{\hat n} \Pre 
+|\nabla s|\, {\bm \delta}(s)\, \delta  \Pre\Big]\, .
\end{split}
\end{equation}
Here we used that the derivative of $\chi$ vanishes where $\delta s$ has support and
$\nabla_{\hat n} {\bm \delta}(s)=|\nabla s| {\bm \delta}'(s)$. Also, here and in the above, ${\bm \delta}'(s)$ can be interpreted in the usual distributional sense. 
In the second line above we  performed an integration by parts using  that the integrand has compact support. The result follows using Equation~\nn{holintegral}.
\end{proof}

\begin{remark}
When $\Pre$ (and thus also $\bar \Pre$) is a density of weight $-\db$, the integral $\int_\Sigma dA_{\bar g} \bar \Pre$ is conformally invariant. In this case the operator appearing in the second term of Equation~\nn{blabla}  is  the Lie derivative of $\Pre$ along~$\hat n$. Along~$\Sigma$ this becomes
$$
\Big((\nabla_{\hat n}  +\nabla.\hat n )\Pre\Big)\Big|_\Sigma=\delta_{\hat n}  \Pre\, ,
$$
where
$\delta_{\hat n}$ is Cherrier's conformally invariant Robin operator; see Definition~\ref{ROBIN}. This  motivates the notation of the definition that follows.
 \end{remark}
 
 \begin{definition}
Let $\Sigma$ be a hypersurface and   let $f\in C^\infty M$ (or  a density of any weight). Then we define
 $$
 \delta_{_{k}}  f = \nabla_{\hat n} f\big|_\Sigma -k\,  H \bar f\, ,
 $$
 where $\bar f=f|_\Sigma$ and $k\in{\mathbb C}$.
 \end{definition}
 
 To compute the varational gradient of $\int_\Sigma dA_{\bar g} \, \bar \Pre$, it remains to express the first term on the right hand side of Equation~\nn{blabla} in the 
form $ \int_M dV_g\,  |\nabla s|{\bm \delta}(s)\, \hat \delta s \, 
\Pre^\prime$
with some local formula for~$\Pre^\prime$ where
 $$\hat\delta s:=\frac{\delta s}{|\nabla s|}\, .$$ 
 We have introduced the  quantity
 $\hat \delta s$ for  homogeneity reasons: 
 Observe that the integral~$\int_{\Sigma_t} dA_{\bar g}\bar  \Pre$ being differentiated on the left hand side of~\nn{blabla} is independent of the
defining function~$s_{_t}$, so in particular is unchanged upon replacing $s_{_t}$ with $v  s_{_t}$ for any positive, smooth function $v$. Thus $  \delta\big(\int_{\Sigma} dA_{\bar g}\bar \Pre\big)=:\delta I(s,\delta s) =\delta I(vs,v\delta s)$.
It is easy to verify the identity
$$
\left.\frac{v\delta s}{|\nabla(vs)|}\right|_\Sigma=\left.\frac{\delta s}{|\nabla(s)|}\right|_\Sigma\, ,
$$
so that along $\Sigma$, $\hat \delta s$ enjoys the same homogeneity property as~$\delta I$.
To rewrite~$\delta I$ with $\hat \delta s$
undifferentiated,  we will need the following ``integration by parts'' lemma:
 \begin{lemma}\label{parts}
Let $s$ be a defining function and suppose $C^a\in \Gamma (TM)$. 
Moreover let $B$ be a test function ({\it i.e.}, smooth with compact support).
 Then
 $$
 \int_M dV_g \, |\nabla s| {\bm \delta}(s)\, C^a \nabla_a^\top B = - \int_M dV_g \, |\nabla s| {\bm \delta}(s)
 B\,  \nabla_a^\top C^\top{}^a\, ,
 $$ 
 where  $\nabla^\top = \nabla -\hat n \nabla_{\hat n}$, $\hat n_a=(\nabla_a s)/|\nabla s|$ and  $C^\top:=C-\hat n\,  \hat n.C$.
\end{lemma}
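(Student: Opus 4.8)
The plan is to reduce the claim to a standard divergence theorem on the hypersurface $\Sigma$ together with the distributional identities already invoked in the proof of Lemma~\ref{vary_dA}. First I would observe that the left-hand side is, by Equation~\nn{holintegral} applied to the hypersurface preinvariant $C^a\nabla_a^\top B$, nothing but $\int_\Sigma dA_{\bar g}\, \overline{C^a\nabla_a^\top B}$. Since $\nabla_a^\top$ is tangential and, restricted to $\Sigma$, equals the intrinsic Levi-Civita connection $\nablab_a$ acting on the scalar $\bar B:=B|_\Sigma$, while $C^a|_\Sigma$ contributes only through its tangential projection $C^\top{}^a$ (the normal part being annihilated by $\nabla_a^\top$ when contracted into a gradient, or more precisely contributing $\hat n^a \nabla_a^\top B = 0$), we get $\overline{C^a\nabla_a^\top B} = \bar C^a\, \nablab_a \bar B$ where $\bar C^a$ is the hypersurface vector field obtained by restricting $C^\top{}^a$.

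Next I would integrate by parts intrinsically on the compact hypersurface $\Sigma$: since $\bar B$ has compact support, $\int_\Sigma dA_{\bar g}\, \bar C^a\nablab_a \bar B = -\int_\Sigma dA_{\bar g}\, \bar B\, \nablab_a \bar C^a$. The only genuinely nontrivial point is the identification of the intrinsic divergence $\nablab_a \bar C^a$ with the restriction of the preinvariant $\nabla_a^\top C^\top{}^a$. This follows from the Gau\ss\ formula~\nn{II}: for a hypersurface vector field one has $\nablab_a v^b = \nabla^\top_a v^b + \hat n^b\II_{ac}v^c$, so taking the trace over $a,b$ gives $\nablab_a v^a = \nabla^\top_a v^a$ because $\hat n^b\II_{ba}=0$ by tangentiality of $\II$. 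Applying this to $v^a = C^\top{}^a$ restricted to $\Sigma$ yields exactly $\nablab_a\bar C^a = (\nabla_a^\top C^\top{}^a)|_\Sigma$. Converting the resulting $\int_\Sigma$ back to an $M$-integral via~\nn{holintegral}, now with preinvariant $-B\,\nabla_a^\top C^\top{}^a$, produces the right-hand side of the claimed identity.

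Alternatively, and perhaps more in the spirit of the surrounding section, one can work entirely distributionally on $M$: write the left side as $\int_M dV_g\,|\nabla s|{\bm\delta}(s)\,C^a\nabla_a^\top B$ and integrate by parts in $M$ against the full connection, being careful that $\nabla_a^\top = \nabla_a - \hat n_a\nabla_{\hat n}$ is not itself the Levi-Civita connection, so that $\nabla^a(|\nabla s|{\bm\delta}(s))$ picks up both a $\nabla_a$ and, through $\nabla_{\hat n}{\bm\delta}(s) = |\nabla s|{\bm\delta}'(s)$, a term that must cancel; one checks that $\nabla^a\hat n_a$ and the ${\bm\delta}'$ terms conspire so that only the tangential divergence of $C^\top$ survives on the support of ${\bm\delta}(s)$, exactly as in the three-line computation in the proof of Lemma~\ref{vary_dA}. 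I would present the first, intrinsic argument as the main line since it is cleanest, and remark that the homogeneity/scale-independence is automatic because both sides are manifestly of the form $\int_M dV_g\,|\nabla s|{\bm\delta}(s)\times(\text{preinvariant})$.

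The main obstacle is purely bookkeeping: keeping straight that $\nabla^\top_a$ applied to the vector $C$ requires first projecting $C$ tangentially (hence the appearance of $C^\top$ rather than $C$ on the right), and verifying that no boundary-type term arises — which is where compactness of $\Sigma$ and compact support of $B$ are used. There is no analytic subtlety beyond the already-accepted manipulations with ${\bm\delta}(s)$ and ${\bm\delta}'(s)$.
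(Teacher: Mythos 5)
Your main argument is correct, but it is a genuinely different route from the paper's. The paper never leaves $M$: it integrates the full operator $\nabla^\top_a$ by parts against the distribution $|\nabla s|\,{\bm\delta}(s)$, then disposes of the extra terms using $\nabla^\top{\bm\delta}(s)=0$ together with the identity $\nabla_{\hat n}\hat n_a=\nabla^\top_a|\nabla s|/|\nabla s|$, which is exactly what makes the non-metricity of $\nabla^\top$ (the $\nabla^\top_a(\hat n^a\hat n_b)$ term) cancel. You instead push everything down to $\Sigma$ via~\nn{holintegral}, integrate by parts intrinsically on the compact hypersurface, and then identify $\nablab_a\bar C^a$ with $(\nabla^\top_aC^{\top a})|_\Sigma$ by tracing the Gau\ss\ formula~\nn{II} and using $\hat n^a\II_{ab}=0$. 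Both are complete proofs; the key identity your route needs (traced Gau\ss) is exactly the intrinsic shadow of the identity the paper's route needs ($\nabla_{\hat n}\hat n=\nabla^\top\log|\nabla s|$). Your version is arguably cleaner and makes the geometric content transparent, at the cost of invoking~\nn{holintegral} for integrands that are not literally preinvariants in the sense of Definition~\ref{R-invtdef} — harmless, since that formula is just the coarea identity and holds for any continuous integrand, but worth a sentence. The paper's version has the advantage of staying in the purely distributional framework used throughout Section~\ref{dfvc}, so that in the proof of Theorem~\ref{VARY} it can be applied term by term to quantities like $\tilde C^{a_1\ldots a_\ell}\nabla^\top_{a_1}\cdots\nabla^\top_{a_\ell}u$ without first recognizing them as restrictions of hypersurface objects. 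Your sketched ``alternative'' is essentially the paper's proof, though as written (``one checks that \dots conspire'') it omits the two identities that actually do the work.
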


\begin{proof}
First, not forgetting the dependence of $\nabla^\top$ on $\hat n$, we use that $B$ is a test function and integrate by parts to  compute
$$
\int_M dV_g \, |\nabla s| {\bm \delta}(s)\, C^a \nabla_a^\top B =
-\int_M dV_g \, B\Big[\nabla_a^\top(|\nabla s|\, {\bm \delta}(s) C^\top{}^a)-
|\nabla s|\, {\bm \delta}(s) C^\top{}^b{} \nabla_a^\top(\hat n^a \hat n_b)
\Big]\, .
$$
Next,  a straightforward computation shows that
$$
\nabla_{\hat n} \hat n_a=\frac{\nabla^\top_a |\nabla s|}{|\nabla s|}\, .
$$
Using the distributional identity $\nabla^\top {\bm \delta}(s)=0$, the result follows.
\end{proof}

\begin{remark}
When $C^a$ and $B$, respectively,  take values in  sections of some tensor bundle over $M$ and its dual and these are appropriately contracted to form a scalar, the natural extension of the above Lemma follows immediately. 
\end{remark}
 
The variation $\delta \Pre$ of a preinvariant is linear in the variation $u=\delta s$ and its derivatives up to 
for some finite order~$k$. If all derivatives on the variation $u$ are tangential derivatives~$\nabla^\top u$, $(\nabla^\top)^2\,  u,\ldots, (\nabla^\top)^k \, u$, we can use the above lemma to integrate these by parts.
In fact, as we shall prove in Theorem~\ref{VARY}, this is always the case. 
That is, we show there is a well-defined notion for the 
 {\it functional derivative} $\frac{\delta \Pre}{\delta s}$ of a preinvariant determined by
 \begin{equation}\label{functional}
\int_M dV_g \, |\nabla s|\, 
{\bm \delta}(s) \, \delta \Pre
:=\int_M dV_g \, |\nabla s|\, 
{\bm \delta}(s) \,
\hat \delta s
\ \frac{\delta \Pre}{\delta s}
\, ,
\end{equation}
for compactly supported variations~$\delta s$.
We  now give our main variational theorem:
 
 \begin{theorem}\label{VARY}
Let $\Sigma$ be a hypersurface with defining function~$s$ and 
let $\bar \Pre$ be a hypersurface invariant with corresponding preinvariant $\Pre$. Then for compactly supported  variations,  
$$\delta \int_\Sigma dA_{\bar g}
\bar \Pre=
\int_M dV_{g}\,
|\nabla s|{\bm \delta}(s)\,
\hat \delta s 
\, \Big( \frac{\delta  \Pre}{\delta  s}-(\nabla_{\hat n}+\nabla.\hat n) \Pre)\Big)\, ,
$$
where $\hat n=(\nabla s)/|\nabla s|$.
 \end{theorem}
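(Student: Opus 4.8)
The strategy is to combine Lemma~\ref{vary_dA} with an induction on the derivative order appearing in $\delta\Pre$, using Lemma~\ref{parts} to integrate by parts all derivatives that land on the variation $u=\delta s$. By Lemma~\ref{vary_dA} we already know
$$
\delta\Big(\int_\Sigma dA_{\bar g}\,\bar\Pre\Big)=\int_M dV_g\,|\nabla s|\,{\bm\delta}(s)\,\Big[\delta\Pre-\hat\delta s\,(\nabla_{\hat n}+\nabla.\hat n)\Pre\Big]\, ,
$$
so the second term is already in the desired form and it remains only to show that the distributional pairing $\int_M dV_g\,|\nabla s|\,{\bm\delta}(s)\,\delta\Pre$ can be rewritten as $\int_M dV_g\,|\nabla s|\,{\bm\delta}(s)\,\hat\delta s\,\tfrac{\delta\Pre}{\delta s}$ for a local expression $\tfrac{\delta\Pre}{\delta s}$. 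Since $\Pre$ is a universal polynomial in $g_{ab}$ and its derivatives, in $s$ and its derivatives up to some order, and in $|{\bm d}s|^{-1}$, the variation $\delta\Pre$ is linear in $u$ and in $\nabla_{a_1}\cdots\nabla_{a_j}u$ for $j$ up to some finite~$k$, with coefficients that are themselves preinvariant-type tensors built from $g$, $s$, and $|\nabla s|^{-1}$.

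The heart of the argument is a normal/tangential splitting of each covariant derivative hitting $u$: write $\nabla_a=\nabla^\top_a+\hat n_a\nabla_{\hat n}$ and commute normal derivatives through, so that every monomial $\nabla_{a_1}\cdots\nabla_{a_j}u$ becomes a sum of terms of the shape $(\text{coefficient})\cdot(\nabla^\top)^p(\nabla_{\hat n})^q u$ with $p+q\le k$. The normal derivatives $(\nabla_{\hat n})^q u$ for $q\ge 1$ are handled by the distributional identity $\nabla_{\hat n}{\bm\delta}(s)=|\nabla s|\,{\bm\delta}'(s)$ together with $s{\bm\delta}'(s)=-{\bm\delta}(s)$ (and its higher analogs): each normal derivative that cannot be absorbed into $\hat\delta s$ is moved onto the delta distribution and thereby traded, after using the product rule and $x^m{\bm\delta}^{(m)}(x)=(-1)^m m!\,{\bm\delta}(x)$, for extra tangential derivatives and lower-order normal pieces. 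Concretely, one checks that the dependence of $\Pre$ on $s$ only through ${\bm d}s$ and $|{\bm d}s|$ (condition (ii): invariance under $s\mapsto vs$) forces the $u$-dependence, on the support of ${\bm\delta}(s)$, to organise itself so that all surviving derivatives are tangential. Once every term has the form $(\text{coeff})\cdot(\nabla^\top)^p \hat\delta s$ (after factoring out one power relating $u$ to $\hat\delta s=u/|\nabla s|$ modulo terms that vanish against ${\bm\delta}(s)$), we apply Lemma~\ref{parts} repeatedly — $p$ times — to move all $p$ tangential derivatives off $\hat\delta s$ and onto the accumulated coefficient, producing the local expression $\tfrac{\delta\Pre}{\delta s}$ as the sum of the resulting $\nabla^\top\!\cdots\nabla^\top(\text{coeff})$ terms. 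Summing over all monomials gives Equation~\nn{functional}, and substituting back into Lemma~\ref{vary_dA} yields the theorem.

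The main obstacle is the bookkeeping in the normal/tangential split: one must show rigorously that, modulo the Robin-type term already isolated, no genuine transverse (normal) derivative of $u$ survives in the distributional pairing — i.e.\ that the homogeneity property (condition (ii), or equivalently the fact that $\delta I(s,\delta s)=\delta I(vs,v\,\delta s)$, already noted before Lemma~\ref{parts}) really does guarantee that everything is expressible through $\hat\delta s$ and its tangential derivatives alone. The cleanest way to control this is to exploit that invariance: choosing, if convenient, a preferred defining function such as a unit-speed $s$ with $|\nabla s|=1$ near $\Sigma$ simplifies $\hat n$, $\II_{ab}$ and the commutators $[\nabla_{\hat n},\nabla^\top]$, and then appealing to naturality (condition (i)) and the $s\mapsto vs$ freedom to conclude that the resulting local formula for $\tfrac{\delta\Pre}{\delta s}$ is independent of this choice. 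The remaining steps — commuting derivatives using the Gau\ss, Codazzi--Mainardi and Ricci identities of Equations~\nn{Gauss}--\nn{Main}, and the repeated integrations by parts — are routine given Lemma~\ref{parts} and its tensor-valued extension noted in the remark following it.
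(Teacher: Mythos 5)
Your overall architecture is right --- reduce via Lemma~\ref{vary_dA}, then show that the remaining term $\int_M dV_g\,|\nabla s|{\bm\delta}(s)\,\delta\Pre$ can be rewritten with all derivatives moved off $\hat\delta s$ using Lemma~\ref{parts} --- and you correctly identify the crux: one must show that no transverse derivatives of $u=\delta s$ survive. But the mechanism you propose for this does not work. A term $\int_M dV_g\,|\nabla s|{\bm\delta}(s)\, C\,\nabla_{\hat n}u$ genuinely depends on $\nabla_{\hat n}u|_\Sigma$, which is \emph{not} determined by the first-order motion of the hypersurface; integrating the normal derivative by parts produces a ${\bm\delta}'(s)$ term whose pairing still involves the normal jet of $u$ along $\Sigma$, and the identities $x^m{\bm\delta}^{(m)}(x)=(-1)^m m!\,{\bm\delta}(x)$ are only available when explicit factors of $s^m$ are present, which they are not in general. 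The disappearance of the normal jet of $u$ is not a term-by-term distributional fact: individual monomials in $\delta\Pre$ do depend on it, and only their \emph{sum} is independent of it, as a consequence of the invariance $I({\mathcal Z}(s))=I({\mathcal Z}(vs))$. Choosing a unit defining function, as you suggest at the end, constrains $s$ but says nothing about the transverse derivatives of the arbitrary compactly supported variation $u$, so it cannot close this gap.

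The missing idea --- and the heart of the paper's proof --- is a reduction of the variation itself rather than a manipulation of the derivatives acting on it. Construct coordinates $(s,x^i)$ by flowing along $g^{-1}({\bm d}s,\cdot)$, and split $u(s,x^i)=u_0(x^i)+s\,U(s,x^i)$ with $u_0:=u(0,\cdot)$ and $U$ bounded. Then $s_{_t}=s(1+tU)+tu_0$, and since $s(1+tU)$ is again a defining function for $\Sigma$ for small $t$, the invariance under $s\mapsto vs$ together with joint differentiability of $I({\mathcal Z}(s+t_1 sU+t_2u_0))$ in $(t_1,t_2)$ gives $\delta I=\tfrac{d}{dt}I({\mathcal Z}(s+tu_0))\big|_{t=0}$; that is, only $u_0$ contributes. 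Now $u_0$ satisfies $\nabla_{\hat n}u_0=0$ identically in the coordinate patch, so $\nabla u_0=\nabla^\top u_0$ and an easy induction converts every iterated covariant derivative of $u_0$ into purely tangential ones; one then passes back from $u_0$ to $u$ using $\nabla^\top s=0$ and $s{\bm\delta}(s)=0$, and finishes with repeated application of Lemma~\ref{parts} exactly as you intend.
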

 
 \begin{proof}
 Thanks to Lemma~\ref{vary_dA}, we need only  verify the well definedness of $\frac{\delta \Pre}{\delta s}$ in Equation~\nn{functional}. Our first step is to construct coordinates~$(s,x^i)$ where $i=1,\ldots,\db$ such that the vector field $\frac{\partial }{\partial s}$ is orthogonal to constant $s=\varepsilon$ hypersurfaces $\Sigma^\varepsilon:={\mathcal Z}(s-\varepsilon)$.
 This is achieved by integrating the vector fields $g^{-1}({\bm d} s,\cdot)$ and using the flow along their integral curves to push forward coordinates $x^i$ on $\Sigma=\Sigma^{\varepsilon=0}$ to $\Sigma^{\varepsilon}$.
 
 Since $\delta s$ has compact support,  we may write the variation $\delta s=u\in C^\infty M$ in the coordinates $(s,x^i)$ as
 $$u(s,x^i)=u(0,x^i)+s \, U(s,x^i)\, ,$$
 for some smooth, bounded, function $U$.
 Now we decompose the 
 one parameter family of defining functions in Equation~\nn{st} according to 
 $$
 s_{_t} = s^\prime  + t u_0\, ,
 $$
 where the function $u_0$ is defined in these coordinates by $u_0=u(0,x^i)$.
Now note that $1+t\, U(s, x^i)$ is a positive function for $t$ small, because $U$ is bounded.
(So  $s^\prime = s(1+t\, U(s, x^i))$
 is also a defining function.) Thus for all such $t$ we have that 
$I(\Sigma):=\int_{\Sigma} dA_{\bar g}\bar \Pre$ 
 satisfies
 $$I(\Sigma)=I({\mathcal Z}(s))=I({\mathcal Z}(s^\prime))\, .$$ Now
 from the construction of $I$ it follows that $I(t_1,t_t):=I({\mathcal
   Z}(s+t_1 s U + t_2 u_0))$ is differentiable as a function on
 ${\mathbb R}^2$. From this and the previous observation it follows
 that
 $$
 \delta I = \frac{d I(s+t u_0)}{dt}\Big|_{t=0}\, . 
 $$

Next, from the definition of a preinvariant it follows that the
variation $\delta I$ is a sum of terms of the form
 $$
 \int_M dV_g \, |\nabla s|{\bm \delta}(s)\, 
C^{a_1\ldots a_k}\nabla_{a_1}\cdots \nabla_{a_k} u\, ,
 $$
 for some smooth tensors $C^{a_1\cdots a_k}(s;g)$. However, the above coordinate argument shows that we may replace this by 
$$
 \int_M dV_g \, |\nabla s|{\bm \delta}(s)\, 
C^{a_1\ldots a_k} \nabla_{a_1}\cdots \nabla_{a_k} u_0\, .
 $$ 
 However, since $\nabla_{\hat n} u_0=0$, we have 
 $\nabla u_0 = (\nabla-\hat n \nabla_{\hat n})u_0=:\nabla^\top u_0$. An easy induction shows that we can reexpress the integral as a sum of terms 
$$
 \int_M dV_g \, |\nabla s|{\bm \delta}(s)\, 
\tilde C^{a_1\ldots a_\ell} \nabla^\top_{a_1}\cdots \nabla^\top_{a_\ell} u\, ,
 $$ where we written $u$ not $u_0$ because $\nabla^\top s=0$ and
$s{\bm \delta}(s)=0$ (and $\tilde C^{a_1\ldots a_\ell}$ are some
smooth tensors).  Integrating all $\nabla^\top$'s by parts according
to Lemma~\ref{parts} then establishes Equation~\ref{functional}.
 \end{proof}

\begin{corollary}
Let~$\Sigma$ be a hypersurface with defining function~$s$ and let  $\bar \Pre$ be a hypersurface invariant   with corresponding preinvariant~$\Pre$. Then 
$$ \frac{\delta  \Pre}{\delta  s}\Big|_\Sigma-\delta_{_{\!\text{-}\bar d}}\,  \Pre
\ \mbox{ and }\  \frac{\delta  \Pre}{\delta  s}-(\nabla_{\hat n}+\nabla.\hat n) \Pre$$
are a hypersurface invariant  and its corresponding preinvariant. 
\end{corollary}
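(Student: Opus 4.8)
The plan is to show that
\[
\Pre':=\frac{\delta\Pre}{\delta s}-(\nabla_{\hat n}+\nabla.\hat n)\Pre
\]
(the combination appearing in Theorem~\ref{VARY}) satisfies conditions (i)--(iii) of Definition~\ref{R-invtdef}, so that it is a pre-invariant of the same tensor type as $\Pre$; since the restriction to $\Sigma$ of a pre-invariant is by definition a hypersurface invariant, it then only remains to identify $\Pre'|_\Sigma$ with the first displayed quantity. The tensor-valued case is handled component by component, so I treat $\Pre$ as scalar-valued below.

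\textbf{Conditions (i) and (iii).} For these I would inspect the explicit local formula for $\frac{\delta\Pre}{\delta s}$ built in the proof of Theorem~\ref{VARY}. There $\delta\Pre$ arises from $\Pre$ by formal differentiation in $s\mapsto s+tu$, so it is a finite sum $\sum_k C^{a_1\cdots a_k}\nabla_{a_1}\!\cdots\nabla_{a_k}u$ in which the coefficients $C^{a_1\cdots a_k}(s;g)$ are jet-partials of the universal polynomial $\Pre$, with coordinate derivatives traded for $\nabla$'s and Christoffel symbols and with the $u$-variation of $|\boldsymbol{d} s|^{-1}$ included; these coefficients are therefore again universal polynomials in $g_{ab}$ and its jets, $(\det g)^{-1}$, $\omega$, $s$ and its jets, and $|\boldsymbol{d} s|^{-1}$, built naturally from $\Pre$. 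Substituting $u_0$ for $u$, trading the $\nabla$'s on $u_0$ for $\nabla^\top$'s (which introduces only further jets of $\hat n=\nabla s/|\nabla s|$ and commutator-curvature terms), and applying Lemma~\ref{parts} once for each surviving tangential derivative, one obtains $\frac{\delta\Pre}{\delta s}$ as an iterated tangential divergence of projections of the $C$'s --- again a universal polynomial in the listed variables, the only new denominators being powers of $|\boldsymbol{d} s|$. Since $(\nabla_{\hat n}+\nabla.\hat n)\Pre$ is manifestly of this form, $\Pre'$ satisfies (iii); and since every operation used --- $\nabla$, metric and $\omega$ contractions, formation of $\hat n$ from $\boldsymbol{d} s$, and projection with $\hat n$ --- commutes with diffeomorphism pullback, $\Pre'$ is natural, which is (i). I expect this bookkeeping to be the main obstacle: one must confirm that the repeated integration by parts and the passage between $\nabla$ and $\nabla^\top$ on the coefficient tensors never escape the class of natural, universal-polynomial expressions in the allowed jet variables.

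\textbf{Condition (ii).} Here I would argue from the variational identity rather than from the formula. By Theorem~\ref{VARY} and Equation~\nn{holintegral}, for every compactly supported $u=\delta s$ the variation of $I:=\int_\Sigma dA_{\bar g}\,\bar\Pre$ is
\[
\delta I=\int_\Sigma dA_{\bar g}\,(\hat\delta s|_\Sigma)\,(\Pre'[s]|_\Sigma).
\]
Given a second defining function $s'=vs$ with $v>0$ smooth, the variation $v\,u$ of $s'$ defines the same family $\Sigma_t=\mathcal Z(s+tu)$, so $\delta I$ is unchanged (this is the homogeneity invariance $\delta I(s,\delta s)=\delta I(vs,v\delta s)$ recorded before Theorem~\ref{VARY}), while its $\hat\delta s$ agrees with that of $u$ on $\Sigma$ by the identity $\frac{v\delta s}{|\nabla(vs)|}|_\Sigma=\frac{\delta s}{|\nabla s|}|_\Sigma$. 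Comparing the two evaluations of $\delta I$ gives $\int_\Sigma dA_{\bar g}\,(\hat\delta s|_\Sigma)\,(\Pre'[s]|_\Sigma-\Pre'[vs]|_\Sigma)=0$; as $\hat\delta s|_\Sigma$ ranges over all compactly supported functions on $\Sigma$ when $u$ varies, the fundamental lemma of the calculus of variations yields $\Pre'[s]|_\Sigma=\Pre'[vs]|_\Sigma$, which is (ii). Hence $\Pre'$ is a pre-invariant.

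\textbf{Restriction to $\Sigma$.} Finally I would evaluate $\Pre'|_\Sigma$ by splitting $\nabla.\hat n=\nabla^a\hat n_a$ into its part along $\Sigma$, which on restriction equals $\II^a_a=\bar d\,H$, and the term $\hat n^a\nabla_{\hat n}\hat n_a$, which vanishes because $\nabla_{\hat n}\hat n_a=\nabla^\top_a|\nabla s|/|\nabla s|$ is tangential (as in the proof of Lemma~\ref{parts}); thus $(\nabla.\hat n)\Pre|_\Sigma=\bar d\,H\,\bar\Pre$ and
\[
\Pre'|_\Sigma=\frac{\delta\Pre}{\delta s}\Big|_\Sigma-\nabla_{\hat n}\Pre|_\Sigma-\bar d\,H\,\bar\Pre=\frac{\delta\Pre}{\delta s}\Big|_\Sigma-\delta_{_{\!\text{-}\bar d}}\,\Pre ,
\]
using $\delta_{_{\!\text{-}\bar d}}f=\nabla_{\hat n}f|_\Sigma+\bar d\,H\bar f$. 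Since $\Pre'$ is a pre-invariant, its restriction $\Pre'|_\Sigma$ is a hypersurface invariant, and it is precisely the first displayed quantity, which completes the argument.
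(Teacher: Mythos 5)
Your argument is correct and is essentially the proof the paper intends: the corollary is stated without proof as an immediate consequence of Theorem~\ref{VARY}, and your use of the defining-function independence of $\delta I$ together with the surjectivity of $u\mapsto \hat\delta s|_\Sigma$ and the fundamental lemma of the calculus of variations is exactly the implicit reasoning, while conditions (i) and (iii) indeed follow by inspecting the natural, universal-polynomial construction of $\frac{\delta\Pre}{\delta s}$ in the theorem's proof. The restriction computation $\nabla.\hat n|_\Sigma=\bar d\,H$ correctly identifies the two displayed quantities as invariant and preinvariant.
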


 \begin{example}\label{copre}
 Let $\hat n_a=\nabla_a s/|\nabla s|$  where $s$ is a defining function for~$\Sigma$. Then
$$\Pre(s;g)=\nabla^a \hat n_a$$
is a preinvariant for $\db$ times the mean curvature. Noting that 
$$\delta \hat n_a =\frac{\nabla_a^\top \delta s}{|\nabla s|}\, ,$$ it follows that
\begin{equation*}
\begin{split}
\delta \Pre &= \nabla^a\Big(\frac{ \nabla^\top_a \delta s}{|\nabla s|}\Big)
=
\big[\Delta^{\!\top}
-\big|\nabla_a^\top \log |\nabla s|\big|^2
+(\Delta^{\!\top} \log|\nabla s|)
\big]\, \hat \delta s
\, ,
\end{split}
\end{equation*}
where $\nabla_a^\top:=\nabla_a - \hat n_a \nabla_{\hat n}$ and $\Delta^{\!\top}:=g^{ab}\nabla^\top_a\nabla_b^\top$.
Notice~$\delta \Pre$ does not contain normal derivatives $\nabla_{\hat n}$ acting on $\hat \delta s$. Thus
$$
\frac{\delta P}{\delta s}=
\Delta^{\!\top} \log|\nabla s|-\big|\nabla_a^\top \log |\nabla s|\big|^2\, .
$$
This is not a preinvariant, but
it is not difficult to verify that
$\frac{\delta \Pre}{\delta s}-(\nabla_{\hat n}+\nabla.\hat n)\Pre$ is and along~$\Sigma$ this equals the hypersurface invariant
$$
-\tr\II^2+\db^2 H^2+\Ric(\hat n,\hat n)=\Sc-\Ric(\hat n,\hat n)-\overline \Sc\, .
$$
Here the last equality relied on Equation~\nn{Gauss}.
 \end{example}

\subsection{Variational methods and  identities}

We now 
explain our strategy for computing variations and
collect some key identities. 
Using Theorem~\ref{VARY}, hypersurface variations are given in terms of preinvariants.
 To express preinvariants in terms of standard hypersurface invariants, we take advantage of property~(ii) of Definition~\ref{R-invtdef} which allows us to choose any defining function when evaluating preinvariants along~$\Sigma$. In a computational context we are often  concerned with expressions built from a sum of terms which separately are not preinvariants 
even though their sum is. 
Therefore in calculations it is important to use 
the  same choice of defining function  for every term.

Given any hypersurface in a Riemannian manifold there is locally a defining function~$s$ 
satisfying
$$
|\nabla s|=1\, .
$$
Defining functions with this property are dubbed {\it unit defining functions}. Since unit defining functions are determined 
by the hypersurface embedding, they provide a good choice for computations.
In fact, there is   a
recursive solution to the formal problem of finding a smooth  unit defining function~$s_1$ given a defining function~$s$ for a hypersurface such that 
$$|\nabla s_1|=1+s^\ell f\, ,$$
for $\ell\in {\mathbb Z}_{\geq 1}$  arbitrarily large and $f\in C^\infty M$ (see~\cite[Section 2.2]{GW15}).
This explicitly relates the jets of~$s$ to hypersurface invariants.
We employ the notation $\stackrel 1=$ for equalities relying on this choice while $\=$  is used when evaluating quantities along $\Sigma$.

In general we abuse  notation  by using the same symbols for preinvariants as for their corresponding hypersurface invariant when the former has been explicitly chosen.
Let us tabulate some commonly used choices for preinvariants as well as their
expressions in terms of a   unit defining function:

\begin{center}
\begin{tabular}{|c|c|c|}
\hline
$\stackrel1=$ &
Preinvariant & Invariant\\
\hline\hline
$n_a:=\nabla_a s$ &$\frac{\nabla_a s^{\phantom{a^a\!\!\!}}}{|\nabla s|}$
& $\hat n_a$
\\[2mm]
$g_{ab}-n_a n_b$ &
 $\gamma_{ab}:=g_{ab}-\hat n_a\hat n_b $
 & $\bar g_{ab}$ \\[2mm]
 $\nabla_a n_b$ &
 $\nabla^\top_a \hat n_b^{\phantom{top}} $
& $\II_{ab}$\\[2mm]
$\frac1{d-1} \, \nabla_a n^a$&
 $\frac 1{d-1}\,  \nabla^\top_a \hat n^a $
& $H$\\[2mm]
$\nabla_a n_b - \frac{(g_{ab}-n_a n_b) \nabla_c n^c}{d-1}$&
$\nabla^\top_a \hat n_b^{\phantom{top}}-\gamma_{ab} H $ & $\IIo_{ab}$\\
\hline
\end{tabular}
\end{center}
So for example, off the hypersurface the notation $\hat n_a$ means $(\nabla_{a}s)/|\nabla s|$ and an expression  like $\nabla_n H$ denotes $\frac1{d-1}\nabla_n \nabla_a^\top \hat n^a$. 
The label $\top$ is employed to denote projection 
of tensors on~$M$ 
  to their part perpendicular to the   unit vector $\hat n$, and $\nabla^\top:=\nabla - \hat n \, \nabla_{\hat n}$.

Our first key variational identity was given in Example~\ref{copre}, but we repeat it for completeness:
\begin{equation}\label{deltanhat}
 \delta  \hat{n}^a 
 =   \frac{\nabla^a \delta s}{ |n|} - \frac{n^a}{|n|^2} \nabla_{\hat{n}} \delta s  
\  \stackrel 1{=} \  \nabla^{\top}_a \delta s
\, \= \, \nablab_a\,  \sideline{\delta s}\, ,
\end{equation}
where $\sideline{\delta s}:=\delta s|_\Sigma\stackrel1=\hat \delta s|_\Sigma=:\sideline{\hat\delta  s}$. Thus 
\begin{equation}
\label{expunge}
\big(\delta \II_{ab}\big)^{\!\top}=\Big[
\nabla_a^\top\Big( \frac{\nabla_b^\top \delta s}{|n|}\Big)-
\frac{\nabla_a^\top \delta s}{|n|}\, 
\nabla_{\hat n} \hat n_b\Big]^\top
\stackrel1=\big(\nabla_a^\top\nabla_b^\top\delta s\big)^{\!\top}\, \= \, 
\nablab_a \nablab_b\,  \sideline{\delta s}\, .
\end{equation}
Importantly,  in the above computations, we first varied preinvariant expressions for invariants and only thereafter specialized these to a unit defining function.

In a functional setting, we can use
Equation~\nn{expunge} to give a useful lemma for computing variations of functionals involving the second fundamental form:

\begin{lemma}
Let $K^{ab}\in \Gamma(\odot^2 T\Sigma)$ and $s$ be a unit defining function with $\sideline{\delta  s}=\delta s|_\Sigma$ where~$\delta s$ has compact support. Then 
\begin{equation}\label{varid}\int_\Sigma dA_{\bar g}\,  K^{ab} \big(\delta \II_{ab}\big)\big|_\Sigma \stackrel1=  \int_\Sigma dA_{\bar g} \, \sideline{\delta s}\, \nablab_a \nablab_b K^{ab} \, .
\end{equation}
\end{lemma}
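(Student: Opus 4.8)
The plan is to combine the distributional integral representation~\nn{holintegral} with Lemma~\ref{parts} in exactly the manner of the proof of Theorem~\ref{VARY}. First I would note that $K^{ab}$, although defined only along $\Sigma$, may be extended arbitrarily to a smooth tensor on a neighborhood of $\Sigma$ in $M$; since we will contract it with objects that are annihilated by $s{\bm\delta}(s)$ and by $\nabla^\top s$, the extension will not matter. Using a unit defining function $s$, so that $|\nabla s|\stackrel1=1$ and Equation~\nn{expunge} applies, the left hand side becomes
\begin{equation*}
\int_\Sigma dA_{\bar g}\, K^{ab}(\delta\II_{ab})\big|_\Sigma
\stackrel1=\int_M dV_g\,{\bm\delta}(s)\, K^{ab}\big(\nabla^\top_a\nabla^\top_b\,\delta s\big)\, ,
\end{equation*}
where I have already discarded the normal-derivative pieces of $\delta\II_{ab}$ using~\nn{expunge} (they vanish upon restriction for a unit defining function) and silently used $|\nabla s|\stackrel1=1$ in~\nn{holintegral}.

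Next I would integrate by parts twice, pulling the two tangential derivatives $\nabla^\top$ off $\delta s$ and onto $K^{ab}$. This is precisely the content of Lemma~\ref{parts} (together with the remark following it, which extends that lemma to tensor-valued $C$ and $B$ contracted to a scalar): applying it once with $C^b:= K^{ab}\nabla^\top_a(\,\cdot\,)$-type bookkeeping is awkward, so more cleanly I would apply Lemma~\ref{parts} first to move the outer $\nabla^\top_a$, obtaining $-\int_M dV_g\,{\bm\delta}(s)\,(\nabla^\top_a K^{ab})(\nabla^\top_b\delta s)$ up to the $C\mapsto C^\top$ projection, then apply it a second time to move $\nabla^\top_b$. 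Each application is legitimate because $\delta s$ is compactly supported and, after the first step, $\nabla^\top_b\delta s$ is still effectively a test object against the distribution $|\nabla s|{\bm\delta}(s)$. The two sign flips combine to give $(+1)$, yielding $\int_M dV_g\,{\bm\delta}(s)\,\delta s\,\nabla^\top_a\nabla^\top_b K^{ab}$ modulo the tangential projections that Lemma~\ref{parts} inserts on the vector fields.

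The one point requiring a little care — and the step I expect to be the main obstacle — is the bookkeeping of the $\top$-projections and the passage from ambient tangential operators $\nabla^\top_a\nabla^\top_b$ restricted to $\Sigma$ to the genuine intrinsic Levi-Civita operator $\nablab_a\nablab_b$ appearing in~\nn{varid}. Here I would invoke the same mechanism already used to derive~\nn{expunge}: since $K^{ab}$ is a hypersurface tensor and $\nabla^\top$ differs from $\nablab$ only by second-fundamental-form terms proportional to $\hat n^b$ (by the Gau\ss\ formula~\nn{II}), and since everything is contracted into the hypersurface-tangential index structure of $K^{ab}$, those correction terms either drop out on restriction or are absorbed; on $\Sigma$ one has $\big(\nabla^\top_a\nabla^\top_b K^{ab}\big)\big|_\Sigma = \nablab_a\nablab_b K^{ab}$. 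Finally, converting the right hand side back from the distributional $M$-integral to the surface integral via~\nn{holintegral} once more, and writing $\sideline{\delta s}=\delta s|_\Sigma$, gives exactly
\begin{equation*}
\int_\Sigma dA_{\bar g}\,\sideline{\delta s}\,\nablab_a\nablab_b K^{ab}\, ,
\end{equation*}
which is the claimed identity~\nn{varid}.
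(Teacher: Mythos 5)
Your argument is correct, but it takes a noticeably heavier route than the one the paper intends. The lemma is meant to follow in two lines from the final equality of Equation~\nn{expunge}: since $K^{ab}$ is tangential, $K^{ab}(\delta\II_{ab})|_\Sigma=K^{ab}(\delta\II_{ab})^\top|_\Sigma\stackrel1= K^{ab}\,\nablab_a\nablab_b\sideline{\delta s}$, and one then integrates by parts twice \emph{intrinsically} on the compact hypersurface $\Sigma$ (legitimate since $\sideline{\delta s}$ is smooth and $\Sigma$ closed), landing directly on $\int_\Sigma dA_{\bar g}\,\sideline{\delta s}\,\nablab_a\nablab_b K^{ab}$. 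You instead stop at the middle equality of \nn{expunge}, pass to the ambient distributional integral \nn{holintegral}, and perform the integration by parts in $M$ via two applications of Lemma~\ref{parts}. This works: the tensor-valued extension of Lemma~\ref{parts} noted in the paper covers both steps, the extension ambiguity of $K^{ab}$ is indeed harmless against $|\nabla s|{\bm\delta}(s)$ and the tangential operators, and your identification $\big(\nabla^\top_b(\nabla^\top_a K^{ab})^\top\big)|_\Sigma=\nablab_b\nablab_a K^{ab}$ is right because the Gau\ss-formula corrections from \nn{II} are proportional to $\hat n$ and are killed by the $\top$-projections and by $\hat n^b\II_{bc}=0$. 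What your route buys is consistency with the machinery of Theorem~\ref{VARY} (everything stays in the ambient distributional picture); what it costs is the extra bookkeeping of extensions and projections that the direct intrinsic integration by parts avoids entirely.
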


Finally, one more technical lemma is required:

\begin{lemma}\label{normalo}
Let $X_{ab}$ be a rank two, symmetric tensor on~$M$ satisfying 
$\hat n^a X_{ab}=0$. Then 
$$\big[\nabla_{\hat n} X_{ab} -\gamma_{ab} \, \gamma^{cd} \nabla_{\hat n}X_{cd} -\nabla_{\hat n} \big(X_{(ab)\circ}\big)\big]^\top=0\, .$$
\end{lemma}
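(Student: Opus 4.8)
The plan is to reduce the claim to two elementary facts: that a symmetric tensor $X_{ab}$ with $\hat n^aX_{ab}=0$ is purely tangential, so it differs from its trace-free part $X_{(ab)\circ}$ only by a pure-trace term proportional to $\gamma_{ab}\,\gamma^{cd}X_{cd}$; and that $\nabla_{\hat n}$ is metric compatible, $\nabla_{\hat n}g_{ab}=\nabla_{\hat n}g^{ab}=0$, so it may be commuted through index raisings and through traces. No curvature identities will enter the argument.

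First I would collect the elementary consequences of the hypotheses. Since $|\hat n|^2=1$, the vector $\nabla_{\hat n}\hat n^a$ is orthogonal to $\hat n$; differentiating $\hat n^aX_{ab}=0$ gives $\hat n^a\nabla_{\hat n}X_{ab}=-(\nabla_{\hat n}\hat n^a)X_{ab}$, which is itself tangential; and $\nabla_{\hat n}\gamma_{ab}=-(\nabla_{\hat n}\hat n_a)\hat n_b-\hat n_a\,\nabla_{\hat n}\hat n_b$ is annihilated by the $\top$-projection on either index. The moral is that, after the final $\top$, every term carrying an explicit $\hat n_a$, $\hat n^a$ or a factor $\nabla_{\hat n}\hat n$ may be dropped, and any term containing $\hat n^aX_{ab}$ vanishes along $\Sigma$.

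Next, decompose $X_{ab}$ into its tangential trace-free part $X_{(ab)\circ}$ and its pure-trace part (a fixed multiple of $\gamma_{ab}\,\gamma^{cd}X_{cd}$), extending this splitting off $\Sigma$ by using $\gamma$ in place of $\bar g$. Applying $\nabla_{\hat n}$ and pulling the derivative through the trace by metric compatibility, the only terms produced beyond $\nabla_{\hat n}X_{(ab)\circ}$ and the pure-trace term $\gamma_{ab}\,\gamma^{cd}\nabla_{\hat n}X_{cd}$ carry either a factor $\nabla_{\hat n}\gamma$ or a factor $\hat n^cX_{cd}$, and hence die under the final $\top$ by the first step. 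What remains is exactly the asserted identity.

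The one place demanding care is the bookkeeping of the trace term: one must be consistent about which metric and which normalisation the symbol $(ab)\circ$ refers to, so that $X_{(ab)\circ}$ and the residual $\gamma_{ab}$-term are both genuine sections of $\odot^2T^*\Sigma$. Once that is pinned down, the verification is the short metric-compatibility manipulation just described; the substance of the lemma is really this normalisation bookkeeping rather than any analytic content.
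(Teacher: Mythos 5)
The paper gives no proof of this lemma (it is explicitly ``left to the reader''), so there is nothing to compare against; your argument is the natural one and it is correct. The two ingredients you isolate are exactly what is needed: differentiating $|\hat n|^2=1$ and $\hat n^aX_{ab}=0$ along $\hat n$ shows that $\hat n^a\hat n^b\nabla_{\hat n}X_{ab}=0$ and that $[\nabla_{\hat n}\gamma_{ab}]^\top=0$ and $(\nabla_{\hat n}\gamma^{cd})X_{cd}=0$, after which $\nabla_{\hat n}$ commutes through the trace-free projection modulo terms killed by $\top$ or by the hypothesis.

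Your closing caveat about normalisation is not a side remark but the actual crux, and it is worth making explicit: writing $X_{(ab)\circ}=X_{ab}-\tfrac1k\gamma_{ab}\gamma^{cd}X_{cd}$, the computation gives
\begin{equation*}
\big[\nabla_{\hat n}X_{ab}-\nabla_{\hat n}(X_{(ab)\circ})\big]^\top=\tfrac1k\,\gamma_{ab}\,\gamma^{cd}\nabla_{\hat n}X_{cd}\, ,
\end{equation*}
so the displayed identity closes only if the coefficient of the middle term matches the $\tfrac1k$ in the trace-free convention. As printed, with either the ambient ($k=d$) or hypersurface ($k=\bar d$) convention for $(ab)\circ$, the middle term is missing that factor; the way the lemma is invoked in Section~\ref{cfm} (to identify $(\nabla_n\IIo_{ab})^\top$ with $(\nabla_n\II_{ab})^\top_\circ$) shows the intended reading is the hypersurface trace-free projection with the $\tfrac1{\bar d}$ in place. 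Your proof establishes precisely that corrected statement, so once you pin the convention down as you propose, the argument is complete.
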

\noindent We leave the proof to the reader.

\subsection{Examples} 

As a first example, we consider the problem of extremizing the area of an embedded hypersurface. The relevant functional is 
$$
I = \int_\Sigma dA_{\bar g} \, .
$$
Using Theorem~\ref{VARY}, we have
$$
\delta I = -\int_\Sigma dA_{\bar g} \ 
\measure\,
(\delta_{_{\text{-}\bar d}} \, 1)= - \, \bar d \ \cdot \int_\Sigma dA_{\bar g} \, 
\measure\  
H \, ,
$$
which recovers the standard vanishing  mean curvature~$H$ condition for minimal surfaces.

A second example is the variation of the 
Willmore 
energy  for surfaces (in Lorentzian signature, this is the rigid string action of~\cite{Polyakov}):
\begin{equation}\label{I2}
{\mathcal I}_2 =-\frac16 \int_\Sigma dA_{\bar g}\,  K \, ,
\end{equation}
where~$K$ is the rigidity density of Equation~\nn{rigidity}.  Using Theorem~\ref{vary_dA}, we find
$$
\delta {\mathcal I}_2 \stackrel 1= -\frac16 \int_\Sigma dA_{\bar g} \, (\delta K)|_\Sigma +\frac16\int_\Sigma dA_{\bar g} \, \sideline{\delta s} \, \delta_n K \, .
$$
Using Equation~\nn{varid}, and observing that only the variations of $\II_{ab}$ contribute when  the  preinvariant $K$ is written in terms of $\II_{ab}$ and $\gamma^{ab}$,
the first of the terms  above becomes
$$
-\frac16\int_\Sigma dA_{\bar g} \, (\delta K)|_\Sigma \stackrel 1= -\frac13\int_\Sigma dA_{\bar g}\, \delta s|\,  \nablab^a\nablab^b \IIo_{ab} \, .
$$
The second term requires that we compute a normal derivative of~$K$:
\begin{equation*}
\begin{split}
\delta_n K -2HK= \nabla_n K  &= 2\IIo^{ab}\nabla_n\IIo_{ab} \stackrel 1= 2\IIo^{ab}[\nabla_n,\nabla_a]n_b  \\[1mm]
&= 2\IIo^{ab}\big[- \Rho_{ab} - \IIo^2_{ab} - 2H\IIo_{ab}\big ]  \\[1mm]
&= - 2\tr(\IIo\Rho) - 4HK \, ,
\end{split}
\end{equation*}
where we have used 
Equation~\nn{Robin} as well as 
that the Weyl tensor vanishes for $d < 4$ and for surfaces~$\tr\IIo^3 = 0$. Assembling the above we find
$$
\delta {\mathcal I}_2 = -\frac13\int_\Sigma dA_{\bar g} \, \measure\, \big( \nablab_a\nablab_b + \Rho_{ab}^\top + H\IIo_{ab} \big ) \IIo^{ab}\, .
$$
Thus, using Equation~\nn{BGG}, we obtain the following:
\begin{proposition}\label{gradient2}
 With respect to hypersurface variations 
 the gradient of the Willmore energy ${\mathcal I}_2$ is
  $$-\frac13L^{ab}\, \IIo_{ab}\, .$$
\end{proposition}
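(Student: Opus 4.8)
The plan is to apply Theorem~\ref{VARY} to the functional ${\mathcal I}_2=-\frac16\int_\Sigma dA_{\bar g}\, K$, reading off the gradient as $\frac{\delta K}{\delta s}\big|_\Sigma-\delta_{_{\text{-}\bar d}}K$ up to the overall constant $-\frac16$. Since $K=\tr\IIo^2$ is a preinvariant when written in terms of $\II_{ab}$ and $\gamma^{ab}$, and since the variation of $\gamma^{ab}$ contributes nothing (it only alters the metric used to contract, whereas $\sideline{\delta s}$-dependence enters through the connection terms in $\delta\II$), the first task is to compute $\delta K$ using Equation~\nn{expunge}: $\delta K\stackrel1= 2\IIo^{ab}(\delta\II_{ab})^\top\stackrel1=2\IIo^{ab}\nablab_a\nablab_b\sideline{\delta s}$ after restriction. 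Applying the integration-by-parts Lemma~\ref{parts} (in its functional form~\nn{varid}) moves both tangential derivatives onto $\IIo^{ab}$, producing $\frac{\delta K}{\delta s}\big|_\Sigma\stackrel1=2\nablab_a\nablab_b\IIo^{ab}$, so that the $\delta K$ contribution to $\delta{\mathcal I}_2$ is $-\frac13\int_\Sigma dA_{\bar g}\,\sideline{\delta s}\,\nablab_a\nablab_b\IIo^{ab}$.

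The second task is the normal-derivative term $\delta_{_{\text{-}\bar d}}K=\delta_n K$. Since $K$ has weight $-2=-\bar d$ only when $\bar d=2$, but here we can simply compute $\nabla_n K$ directly: in a unit defining function $\nabla_n\IIo_{ab}\stackrel1=\nabla_n\nabla_a n_b=[\nabla_n,\nabla_a]n_b+\nabla_a\nabla_n n_b$, and since $|n|=1$ gives $\nabla_n n_b=0$ (radial geodesic flow), this reduces to the curvature commutator. Expanding $[\nabla_n,\nabla_a]n_b$ via the Riemann tensor and the decomposition $R_{abcd}=W_{abcd}+2g_{a[c}\Rho_{d]b}-2g_{b[c}\Rho_{d]a}$, then using that $W$ vanishes in $d<4$ and the surface identity $\tr\IIo^3=0$, yields $\nabla_n K=-2\tr(\IIo\Rho)-4HK$; equivalently $\delta_n K-2HK=\nabla_n K$ so $\delta_n K=-2\tr(\IIo\Rho)-2HK$, but what enters Theorem~\ref{VARY} is $(\nabla_{\hat n}+\nabla.\hat n)K=\nabla_n K+\bar d H K=\nabla_n K+2HK=-2\tr(\IIo\Rho)-2HK$. (I will need to track the weight/coefficient bookkeeping carefully here — this is the step most prone to sign or factor slips, and the main obstacle.) Contracting $\tr(\IIo\Rho)=\Rho^\top_{ab}\IIo^{ab}$ and $HK=H\IIo_{ab}\IIo^{ab}$ assembles the second-term contribution as $+\frac16\int_\Sigma dA_{\bar g}\,\sideline{\delta s}\,(-2\tr(\IIo\Rho)-2HK)=-\frac13\int_\Sigma dA_{\bar g}\,\sideline{\delta s}\,(\Rho^\top_{ab}+H\IIo_{ab})\IIo^{ab}$.

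Combining the two contributions gives
$$
\delta{\mathcal I}_2=-\frac13\int_\Sigma dA_{\bar g}\,\measure\,\big(\nablab_a\nablab_b+\Rho^\top_{ab}+H\IIo_{ab}\big)\IIo^{ab}\, ,
$$
and recognizing the operator in parentheses as precisely the $\bar d=2$ case of $L_{ab}$ from Equation~\nn{BGG} (acting on the weight $-d$ trace-free tensor $\IIo^{ab}$, which is well-defined by Proposition~\ref{BGGop}), we conclude that the gradient of ${\mathcal I}_2$ is $-\frac13 L^{ab}\IIo_{ab}$, as claimed. I expect the only genuinely delicate point to be the careful evaluation of the curvature commutator in $\nabla_n\IIo_{ab}$ together with correctly accounting for the mean-curvature weight term in the Robin-type operator of Theorem~\ref{VARY}; the rest is routine tensor manipulation already licensed by Lemmas~\ref{parts} and the identity~\nn{varid}.
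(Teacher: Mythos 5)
Your proposal is correct and follows essentially the same route as the paper: vary via Theorem~\ref{VARY}, use Equation~\nn{varid} to integrate the $\delta\II$ contribution by parts onto $\IIo^{ab}$, compute $\nabla_nK$ through the curvature commutator with the Weyl tensor absent in $d=3$ and $\tr\IIo^3=0$, and recognise the resulting operator as the $\db=2$ case of $L^{ab}$ from Equation~\nn{BGG}. The weight/mean-curvature bookkeeping you flag as delicate is handled correctly and matches the paper's computation term for term.
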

\noindent
Thus the Willmore equation is $L^{ab}\IIo_{ab}=0$.
Note that the Willmore invariant displayed in the above Proposition
is manifestly conformally invariant because $L^{ab}$ is the invariant operator of Proposition~\ref{BGGop}.
 For Euclidean host spaces, using Equation~\nn{LapH}, the Willmore equation takes the more familiar form
$$
\bar \Delta H + 2H(H^2-{\rm K})=0\, ,
$$
where ${\rm K}=\frac12\,  \overline \Sc$ is the Gau\ss\  curvature.

\section{The conformal  four-manifold hypersurface energy functional}\label{cfm}

Our main variational result, as stated in Proposition~\ref{gradient}, 
is that  the obstruction density~${\mathcal B}_3$ for spaces agrees with  the  variational gradient of the {\it rigid membrane functional} 
$$
{\mathcal I}_3 =\frac16 \int_\Sigma dA_{\bar g}\, L \, ,
$$
where 
the membrane rigidity density~$L$ is given in Equation~\nn{mrd}.
In this section we prove Proposition~\ref{gradient}. 

We begin  by using Theorem~\ref{vary_dA} to write the variation  as
$$
\delta {\mathcal I}_3= \frac16\int_\Sigma dA_{\bar g}\, \big(\delta L\big)\big|_\Sigma - \frac16\int_\Sigma dA_{\bar g} \, 
\measure\, 
 \delta_{\hat{n}} L \, .
$$
To compute the first of these terms, we use the Fialkow Equation~\nn{Fialkow} to write
$$
L = \tr\IIo^3 - W(\hat n,\IIo,\hat n) \, ,
$$
so that
$$
\int_\Sigma dA_{\bar g} \big(\delta L\big)\big|_\Sigma = \int_\Sigma dA_{\bar g}\big[ 3\IIo_{ab}^2 \, \delta \IIo^{ab} - (\delta\IIo^{ab})\Wn_{ab} - 2\IIo^{ab}\Wn_{bac}\, \delta n^c \big]\big|_\Sigma \, .
$$
Using Equations~\nn{deltanhat} and~\nn{varid}, this becomes
\begin{equation*}
\begin{split}
\int_\Sigma dA_{\bar g} \, \delta L \stackrel1= \int_\Sigma dA_{\bar g}\, \sideline{\delta s}\,  &\big[ 3\nablab^a\nablab^b\IIo^2_{(ab)\circ} - \nablab^a\nablab^b \Wn_{ab} + 2\nablab^c(\IIo^{ab}\Wn_{bac}^\top) \big ]\big|_\Sigma \\[1mm]
\stackrel1= \int_\Sigma dA_{\bar g}\, \sideline{\delta s} \, &\big[ 3\nablab^a\nablab^b\IIo^2_{(ab)\circ} - \nablab^a\nablab^b \Wn_{ab} + \Wn{}_{bac}^\top \Wn^{bac} + 2\IIo^{ab}\nablab^c\Wn_{bac}^\top \big ]\big|_\Sigma \, ,
\end{split}
\end{equation*}
where  the final two terms in the last line were obtained using  Equation~\nn{tfcm} to rewrite the final term of the first line.

To compute the second term in~$\delta {\mathcal I}_3$, we employ Lemma~\ref{normalo} to   first calculate
the normal derivative of the second fundamental form:
\begin{equation*}
\begin{split}
(\nabla_n\IIo_{ab})^\top &\stackrel1= (\nabla_n \II_{ab})^\top_\circ = ([\nabla_n,\nabla_a]n_b)^\top_\circ = \Wn_{ab} - \Rho_{(ab)\circ}^\top - \II^2_{(ab)\circ} \\[1mm]
&= 2\Wn_{ab} - \bar \Rho_{(ab)\circ} - 2\IIo^2_{(ab)\circ} - H\IIo_{ab}\, ,
\end{split}
\end{equation*}
where in the final step we have used the Fialkow Equation~\nn{Fialkow}. Additionally, we must compute a normal derivative of the Weyl tensor:
\begin{equation*}
\begin{split}
\nabla_n \Wn_{ab} &\stackrel1= n^dn^c\nabla_c \Wn_{bad}  = (g^{dc} - \bar g^{dc})\nabla_c \Wn_{bad}  \\[1mm]
&= \II^{dc}W_{dabc} + n^c \nabla^d W_{dabc} - \nabla^\top_{c}\Wn_{ba}{}^{c} \\[1mm]
&= \IIo^{dc}W_{dabc} - 3H\Wn_{ab} + n^c C_{bca} - \nablab^c\Wn_{bac}^\top + \IIo^{c}_a \Wn\!^{\phantom{c}}_{cb} \, .
\end{split}
\end{equation*}
Orchestrating the above results we find
\begin{equation*}
\begin{split}
\delta_n L :=& \nabla_n L + 3HL \stackrel1= 3\IIo^2_{ab}\nabla_n \IIo^{ab} - (\nabla_n \IIo^{ab})\Wn_{ab} - \IIo^{ab}\nabla_n \Wn_{ab} \\[1mm]
=& \, 3\bar\Rho^{ab}\IIo^2_{(ab)\circ} - W(n,\bar \Rho,n) - HW(n,\IIo,n) - C(n,\IIo) - \nablab^c\IIo^{ab}\Wn_{bac}^\top \\[1mm]
&- 7W(n,\IIo^2,n) + K^2 + \IIo^{ab}\IIo^{cd} W_{dabc} + 2\Wn\!_{ab}\Wn^{ab} \, .
\end{split}
\end{equation*}
Using these, the variation~$\delta {\mathcal I}_3$ becomes
\begin{equation*}
\begin{split}
\delta {\mathcal I}_3 \stackrel1= \frac16\int_\Sigma dA_{\bar g} \, \sideline{\delta s} &\Big [ 3\nablab^a\nablab^b \IIo^2_{(ab)\circ} + 3\bar \Rho^{ab}\IIo^2_{(ab)\circ} - \nablab^a\nablab^b \Wn_{ab} - W(n,\bar \Rho, n) \\[1mm]
& + \IIo^{ab}\nablab^c \Wn_{bac}^\top - C(n,\IIo) - HW(n,\IIo,n) + \Wn_{bac}^\top \Wn^{bac} \\[1mm]
& - 7W(n,\IIo^2,n) + K^2 + \IIo^{ab}\IIo^{cd}W_{dabc} + 2\Wn_{ab}\Wn^{ab} \Big ]\Big|_\Sigma \, .
\end{split}
\end{equation*}
This result can be written in a manifestly conformally invariant way using the 
operator~$L^{ab}$
of Proposition~\ref{BGG} and the  hypersurface Bach tensor of Lemma~\ref{hsb}:
\begin{equation*}
\begin{split}
\delta {\mathcal I}_3 = \frac16\int_\Sigma dA_{\bar g} \, \measure\,  \Big ( L^{ab}  \big[3\IIo^2_{(ab)\circ}& - \Whn_{ab} \big] - \IIo^{ab}\Bach_{ab} + K^2+ \Whn_{bac}^\top \Whn^{bac} \\[1mm]
& - 7W(\hat n,\IIo^2,\hat n)  + \IIo^{ab}\IIo^{cd}W_{dabc} + 2\Whn_{ab}\Whn^{ab} \Big )\, .
\end{split}
\end{equation*}
This shows that the tensor
$$
\B_3 \!= \!\frac16\Big[ \!L^{ab\,}\! \!
\hspace{.2mm}
\big(3\IIo^2_{\!(ab)\!\hspace{.2mm}\circ} \hspace{.4mm}\!\!-\! \hspace{.2mm}\Whn_{ab}\big) -
\IIo^{ab}\!B_{ab}
 + K^2\! -7\Whn^{ab}\IIo^2_{ab} + 2\Whn_{ab}\Whn^{ab}\! + \IIo^{ab}\IIo^{cd}W_{\!cabd} + \Whn\!_{abc}^\top\Whn^{abc}\Big] 
$$
is the variational gradient and completes the proof of Proposition~\ref{gradient}.

\section{Conformal hypersurfaces and the singular Yamabe problem}\label{tractors}

We define here a  {\it singular Yamabe problem} as the boundary version of the classical Yamabe problem of finding a conformally related metric with constant scalar curvature~\cite{Aviles,MazzeoC,ACF}:

\begin{problem}\label{AYP}
Given a $d\geq 3$-dimensional Riemannian manifold~$(M,g)$ with boundary~$\Sigma:=\partial M$, find a smooth real-valued
function~$u$ on~$M$ satisfying the  conditions: 
\begin{enumerate}
\item~$u~$ is a defining function for~$\Sigma$ ({\it {\it i.e.}},~$\Sigma$
  is the zero set of~$u$, and~$\boldsymbol{d} u_x\neq 0$~$\forall x\in \Sigma$);
\item\label{(2)}~$\g:=u^{-2}g$ has scalar curvature
${\rm Sc}^{\g}=-d(d-1)$. 
\end{enumerate}
\end{problem}
Where~$0<u:=\rho^{-2/(d-2)}$, part (2) of this
problem is governed by the Yamabe equation
\begin{equation*}\label{Yamabe}\Big[\!-4\, \frac{d-1}{d-2}\, \Delta^g+\Sc^g\Big]\rho +  d\, (d-1)\,  \rho^{\frac{d+2}{d-2}}=0\, .\end{equation*}
But since~$u$ is a defining function, to deal with boundary aspects the equation 
 can be usefully recast: Since each $g\in \cc$ is in~1:1 correspondence with a true scale $\tau\in \Gamma(\ce M[1])$, setting the smooth defining function $u=\sigma/\tau$, the Yamabe equation becomes
\begin{equation}\label{Ytwo}
S(\sigma):= 
\big(\nabla \si \big)^2 - \frac{2}{d} \, \si\, \Big(\Delta +\frac{\rm Sc}{2(d-1)}\Big) \si = 1 \, . 
\end{equation}
In the above $\Delta:=\bg^{ab}\nabla_a \nabla_b$ and all index contractions are performed with the conformal metric $\bg$ and its inverse.
It follows easily that  the quantity $S(\sigma)$ on the left hand side of Equation~\nn{Ytwo} is conformally invariant. 
Note that $S(\sigma)=-\frac{Sc^{\g}}{d(d-1)}$.
Also, because $u$ is a defining function, $\sigma$ is a {\it defining density}: this is a section of~$\Gamma(\ce M[1])$ with zero locus ${\mathcal Z}(\sigma)=\Sigma$ and  such that for any Levi-Civita connection in the conformal class $\nabla \sigma\neq 0$ along $\Sigma$; see Section~\ref{geom-Sc} below.

Development of the asympototic analysis of the singular Yamabe problem as well as uncovering its fundamental role in the theory of conformal hypersurface invariants  uses the tractor calculus treatment of conformal geometries. 
We next review key aspects of that theory.

\subsection{Conformal geometry and tractor calculus} \label{ASC-sec}

Although there is
no distinguished connection on the tangent bundle
$TM$ for
a $d$-dimensional conformal manifold~$(M,\cc)$,
 there is a canonical metric~$h$ and linear connection
$\nabla^{\ct}$ (preserving~$h$) on a related higher rank vector bundle known as
the tractor bundle~$\ct M$. 
This  is a
rank~$d+2$ vector bundle equipped with a canonical
{\it tractor connection}~$\nabla^\ct$~\cite{Thomas,BEG}. The bundle~$\ct M$ is not
irreducible but has a  composition series given by the   
 semi-direct sum 
  $$ \cT^AM= \ce M[1]\lpl
\ce_a M[1]\lpl\ce M[-1]\, .$$
There is  a canonical  bundle
inclusion~$ \ce M[-1] \to \cT^A M$, with~$T^*M[1]$ a subbundle of the
quotient by this, as well as a surjective bundle map~$\cT M\to
\ce M[1]$.  We denote by~$X^A$ the canonical section of~$
\cT^A M[1]:=\cT^A M\otimes \ce M[1]$ giving the first of these:
\begin{equation}\label{X-incl}
X^A:\ce M[-1]\to \cT^A M\,  ,
\end{equation}
and term~$X$  the {\em canonical tractor}.

A choice of metric~$g \in \cc$ determines an
isomorphism (or {\it splitting})
\begin{equation}\label{split}
\mathcal{T} M\stackrel{g}{\cong} \ce M[1]\oplus T^*\!M[1]\oplus
\ce M[-1] ~.
\end{equation}
We may write, for example,~$U\stackrel{g}{=}(\si,~\mu_a,~\rho)$, or
alternatively~$[U^A]_g=(\si,~\mu_a,~\rho)$, to mean that~$U$ is an invariant
section of~$\ct M$ and~$(\si,~\mu_a,~\rho)~$ is its image under the
isomorphism~\nn{split}; often the dependence on  $g\in \cc$ is supressed when this is clear by context.
Changing to a conformally related metric
$\widehat{g}=\Omega^2g$  gives a different
isomorphism, which is related to the previous one by the transformation
formula
\begin{equation*}\label{transf}
{(\si,\mu_a,\rho)}^{\sss\Omega^2 g}=\big(\Omega\si,\, \Omega(\mu_a+\si\Up_a),\, \Omega^{-1}(\rho-g^{bc}\Up_b\mu_c-
\tfrac{1}{2}\si g^{bc}\Up_b\Up_c)\big)^g,  
\end{equation*}
where~$\Upsilon_b$ is the one-form~$\Omega^{-1}\dd\Omega$.

 In terms of the above {splitting}, the tractor connection is given by 
\begin{equation}\label{trconn}
\nd^{\ct}_a
\left( \begin{array}{c}
\si\\[1mm]\mu_b\\[1mm] \rho
\end{array} \right) : =
\left( \begin{array}{c}
    \nabla_a \si-\mu_a \\[1mm]
    \nabla_a \mu_b+ \bg_{ab} \rho +\Rho_{ab}\si \\[1mm]
    \nabla_a \rho - \Rho_{ac}\mu^c  \end{array} \right) .
\end{equation} 
We will often recycle the Levi-Civita notation
to also denote  the tractor connection; it is the only connection
we shall use on~$\ct M$, its dual, and tensor powers. Its curvature is  the tractor-endomorphism valued two form
$
{\mathcal R}^{\bm \sharp} 
$;
in the above splitting this acts as
\begin{equation}\label{curvature}
{\mathcal R}_{ab}^{\bm \sharp}\left( \begin{array}{c}
\si\\[1mm]\mu_c\\[1mm] \rho
\end{array} \right) 
=
[\nabla_a,\nabla_b] 
\left( \begin{array}{c}
\si\\[1mm]\mu_c\\[1mm] \rho
\end{array} \right)  =
\left( \begin{array}{c}
   0 \\[1mm]
    W_{abc}{}^d \mu_d+ C_{abc} \sigma  \\[1.3mm]
    -C_{abc} \mu^c
    \end{array}\right)\, .
\end{equation}

 For~$[U^A]=(\si, \mu^a,
\rho)$ and~$[V^A]=(\tau, \nu^a,\kappa)$,  the conformally invariant, signature $(d+1,1)$,  {\em tractor metric}~$h$ on~$\mathcal{T}M$  given by
\begin{equation}\label{trmet}h(U,V)=h_{AB}U^A V^B=\si \kappa +\bg_{ab}\mu^a\nu^b+\rho\tau=:U\cdot V \, ,\end{equation}
is preserved by the tractor connection, {\it i.e.},~$\nabla^\cT h=0$. 
It follows from this formula that~$X_A=h_{AB}X^B$ provides the surjection by contraction
$\iota(X):\cT M\to \ce[1]$.
   The tractor metric~$h_{AB}$
and its inverse~$h^{AB}$ are used to identify~$\ct M$ with its dual  and to raise and lower tractor
indices. The  notation~$V^2$ is shorthand for~$V_A V^A=h(V,V)$.

Tensor powers of the standard tractor bundle~$\ct M$, and tensor products
thereof, are vector bundles that are also termed tractor bundles. We
shall denote a  tractor bundle of arbitrary tensor type by~$\cT^\Phi M$ and write
$\cT^\Phi M[w]$ to mean~$\cT^\Phi M\otimes \ce M[w]$;~$w$ is then said to be
the weight of~$\cT^\Phi M[w]$. 

Closely linked to~$\nabla^\ct$ is an important second order
conformally invariant differential operator $$ D^A : \Gamma (\cT^\Phi M
[w])\to \Gamma (\cT^{\Phi'}\!M[w-1])\, ,$$   known as the Thomas
(or tractor) D-operator. Here  $\cT^{\Phi'}\!M[w-1]:=\cT^A\otimes \cT^\Phi M[w-1]$. In a scale~$g$, 
\begin{equation}\label{Dform}
[D^A  ]_g =\left(\begin{array}{c} (d+2w-2)\, w\  \hspace{.3mm} \\[1mm]
(d+2 w-2) \nabla_a \\[1mm]
-(\Delta^g+ \J w)    \end{array} \right) ,
\end{equation}
where here~$\Delta=\bg^{ab}\nabla_a\nabla_b$, and~$\nabla$ is the coupled
Levi-Civita-tractor connection~\cite{BEG,Thomas}. 
The following variant of the Thomas D-operator is also useful.
\begin{definition}\label{hD}
Suppose that~$w\neq1-\frac d2$. The operator
$$
\hD^A:\Gamma(\cT^\Phi M[w])\longrightarrow \Gamma\big(\ct^{\Phi'} M[w-1]\big)
$$
is defined by
$$
\hD^A T :=  \frac1{d+2w-2} \, D^A T\, .
$$
\end{definition}

\begin{remark}
We term the critical value $w=1-\frac d2$ the {\it Yamabe weight}, since one then has
$$
D^A=-X^A \square_{Y}\, ,
$$
where the operator $-\square_{Y}:\Gamma(\ct^\Phi M[1-\frac d2])\to
\Gamma(\ct^\Phi M[-1-\frac d2])$ is the conformally invariant, tractor-coupled,  Yamabe operator given by $\Delta -\frac{d-2}{2}\, \J$ in a choice of scale.
\end{remark}

The Thomas D-operator is null, in the sense that
$$
D^A \circ D_A = 0\, ;
$$
this operator identity can easily be derived from Equation~\nn{Dform}. The Thomas D-operator is not, however, a  derivation. Its failure to obey a Leibniz rule is captured by the following Proposition, a result which was first observed in~\cite{Joung} and proved in~\cite{GW15}:

\begin{proposition}\label{leib-fail}
Let~$T_i\in \Gamma(\cT^\Phi M[w_{i}])$ for~$i=1,2$, and~$h_{i}:=d+2w_{i}$,~$h_{12}:=d+2w_1+2w_2-2$ with~$h_i\neq0\neq h_{12}$.
Then
\begin{equation}\label{ls}
\hD^A(T_1T_2)- (\hD^A T_1) \, T_2 - T_1(\hD^A T_2)=-\frac{2}{d+2w_1 + 2w_2 -2}\, X^A\, (\hD_B T_1)(\hD^B T_2)\, .
\end{equation}
\end{proposition}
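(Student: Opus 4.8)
The plan is to establish~\nn{ls} by direct computation in an arbitrary choice of scale $g\in\cc$. This is legitimate because every object on both sides --- the operator $\hD^A$, the tractor metric $h_{AB}$, the canonical tractor $X^A$, and tensor products/contractions of tractors --- is conformally invariant, so an equality of sections verified in one scale holds in every scale; the stated nondegeneracy hypotheses guarantee that all the operators $\hD^A$ appearing are defined (Definition~\ref{hD}). Throughout, $T_1T_2$ denotes the tensor product (possibly followed by contractions), and one uses only the slotwise Leibniz rules for the tractor-coupled connection $\nabla$ and Laplacian $\Delta$, which hold because $\nabla$ preserves $h$.

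First I would record, from~\nn{Dform} and Definition~\ref{hD}, that in the splitting~\nn{split} a weight-$w$ tractor field $T$ has
\begin{equation*}
[\hD^A T]_g=\Big(wT,\ \nabla_a T,\ -\tfrac{1}{d+2w-2}(\Delta+\J w)T\Big)\, ,
\end{equation*}
with $\nabla,\Delta$ the tractor-coupled connection and Laplacian. Applying this to $T_1T_2$ (of weight $w_1+w_2$) and to $T_1,T_2$ separately: the top slots of $\hD^A(T_1T_2)$ and of $(\hD^AT_1)T_2+T_1(\hD^AT_2)$ agree because weights add, and the middle slots agree because $\nabla$ obeys the Leibniz rule. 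Hence the left-hand side of~\nn{ls} has vanishing top and middle slots, so it equals $X^A$ times its own bottom-slot component (recall $[X^A]_g=(0,0,1)$). This reduces the proof to computing that bottom slot.

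For the bottom slot, the second-order Leibniz rule $\Delta(T_1T_2)=(\Delta T_1)T_2+T_1(\Delta T_2)+2(\nabla_aT_1)(\nabla^aT_2)$ together with $\J(w_1+w_2)=\J w_1+\J w_2$ gives, after collecting terms, that the bottom component of the left side of~\nn{ls} is
\begin{equation*}
\begin{split}
-\frac{2(\nabla_aT_1)(\nabla^aT_2)}{h_{12}}
&+\Big(\frac{1}{d+2w_1-2}-\frac{1}{h_{12}}\Big)\big[(\Delta+\J w_1)T_1\big]T_2\\
&+\Big(\frac{1}{d+2w_2-2}-\frac{1}{h_{12}}\Big)T_1\big[(\Delta+\J w_2)T_2\big]\, .
\end{split}
\end{equation*}
On the other side, expanding the contraction via the tractor metric~\nn{trmet} --- under which lowering a tractor index interchanges the $\ce M[1]$ and $\ce M[-1]$ slots --- yields
\begin{equation*}
(\hD_BT_1)(\hD^BT_2)=(\nabla_aT_1)(\nabla^aT_2)-\frac{w_1T_1(\Delta+\J w_2)T_2}{d+2w_2-2}-\frac{w_2\big[(\Delta+\J w_1)T_1\big]T_2}{d+2w_1-2}\, .
\end{equation*}
Multiplying by $-2/h_{12}$ and comparing term by term with the previous display, the $(\nabla_aT_1)(\nabla^aT_2)$ contributions match immediately, while the two second-order terms match by the elementary partial-fraction identity $\tfrac{1}{d+2w_i-2}-\tfrac{1}{h_{12}}=\tfrac{2w_j}{(d+2w_i-2)\,h_{12}}$ for $\{i,j\}=\{1,2\}$ (using $w_1+w_2-w_i=w_j$ and $h_{12}=d+2w_1+2w_2-2$). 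This proves~\nn{ls}.

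I do not expect a genuine conceptual obstacle: the entire content is the second-order Leibniz defect $2(\nabla_aT_1)(\nabla^aT_2)$ of the Laplacian, repackaged in tractor form. The only points that need care are the bookkeeping of the two distinct second-order slots (which carry different, weight-dependent normalizations) and the sign and slot conventions in the tractor-metric contraction $(\hD_BT_1)(\hD^BT_2)$; once these are set up, the verification is a short algebraic matching.
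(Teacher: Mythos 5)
Your computation is correct and complete: the top and middle slots of the left side of~\nn{ls} vanish by additivity of weights and the Leibniz rule for the coupled connection, and your bottom-slot matching via the second-order Leibniz defect $2(\nabla_aT_1)(\nabla^aT_2)$ and the partial-fraction identity checks out exactly. The paper itself does not prove this proposition (it defers to the reference where it was first established), and the argument there is the same direct slot-by-slot verification in a scale, so you have supplied essentially the standard proof. One small caveat: the hypothesis as literally stated, $h_i:=d+2w_i\neq 0$, does not by itself guarantee $d+2w_i-2\neq 0$, which is what Definition~\ref{hD} actually requires for $\hD^AT_i$ to be defined; this is a quirk of the proposition's statement rather than of your argument, but your parenthetical claim that the stated hypotheses ensure all the $\hD^A$ are defined should be read with that in mind.
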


We shall also need the following Lemma which is easily verified by direct application of Equation~\nn{Dform} or the modified Leibniz rule~\nn{ls} in tandem with the identity
\begin{equation}\label{DX}\hD^A X^B=h^{AB}\, .\end{equation}
\begin{lemma}
Let $T\in \Gamma(\ct^\Phi M[w])$. Then 
\begin{equation}\label{DXT}
D_A(X^AT)=(d+w)(d+2w+2)T\, .
\end{equation}
\end{lemma}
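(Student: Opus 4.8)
The plan is to verify the identity
\begin{equation*}
D_A(X^AT)=(d+w)(d+2w+2)T
\end{equation*}
for $T\in\Gamma(\ct^\Phi M[w])$ by a short computation, using either the explicit formula \nn{Dform} together with \nn{X-incl} for the splitting of $X^A$, or the modified Leibniz rule \nn{ls} together with \nn{DX}. I expect the second route to be cleaner and essentially bookkeeping, so I would present that one, and mention the first as an alternative check.

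First I would record that $X^A\in\Gamma(\ct^A M[1])$ has weight $1$, while $T$ has weight $w$, so the product $X^AT$ has weight $w+1$. Applying $\hD_B$ to the product $X^A\,T$, with $T_1=X^A$ (weight $w_1=1$) and $T_2=T$ (weight $w_2=w$), Proposition~\ref{leib-fail} gives
\begin{equation*}
\hD_B(X^AT)=(\hD_B X^A)\,T+X^A(\hD_B T)-\frac{2}{d+2w}\,X_B\,(\hD^C X^A)(\hD_C T)\, .
\end{equation*}
Now I would substitute $\hD_B X^A=\delta_B^{\,A}$ (the index-lowered form of \nn{DX}, i.e. $\hD^A X^B=h^{AB}$), which collapses the first term to $\delta_B^{\,A}T$ and the contraction $(\hD^C X^A)(\hD_C T)$ to $\hD^A T$. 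Contracting the free index $A$ with $B$ (i.e.\ taking the trace, using $\delta_A^{\,A}=d+2$) then yields
\begin{equation*}
\hD_A(X^AT)=(d+2)\,T+X^A(\hD_A T)-\frac{2}{d+2w}\,(\hD^A T)\,X_A\, .
\end{equation*}
The remaining step is to evaluate $X^A\hD_A T$: from Definition~\ref{hD} and the explicit form \nn{Dform}, $X^A D_A$ acting on a weight-$w$ tractor returns $(d+2w-2)\,w\,T$ after pairing $X^A$ (whose only nonzero slot is the top one) against the bottom slot of $D_A T$; dividing by $d+2w-2$ gives $X^A\hD_A T=w\,T$. Hence
\begin{equation*}
\hD_A(X^AT)=(d+2)\,T+w\,T-\frac{2w}{d+2w}\,T=\frac{(d+2w)(d+2)+w(d+2w)-2w}{d+2w}\,T=\frac{(d+w)(d+2w+2)}{d+2w}\,T\, .
\end{equation*}
Finally I would translate back from $\hD_A$ to $D_A$: since $X^AT$ has weight $w+1$, Definition~\ref{hD} gives $D_A(X^AT)=(d+2(w+1)-2)\,\hD_A(X^AT)=(d+2w)\,\hD_A(X^AT)$, and the factor $d+2w$ cancels, leaving the claimed formula $D_A(X^AT)=(d+w)(d+2w+2)T$.

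The main obstacle is purely arithmetic: keeping the weight shift straight when passing between $\hD$ and $D$ (the product $X^AT$ has weight $w+1$, not $w$), and correctly evaluating $X^A\hD_A T=wT$. Neither is deep, but a sign or an off-by-one in the weight would spoil the normalisation, so I would double-check the final answer against the direct computation from \nn{Dform}, where $X^A D_A(X^BT)$ and the middle-slot contributions can be read off component-wise and should reproduce $(d+w)(d+2w+2)(d+2w)T$ before dividing.
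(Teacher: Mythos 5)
Your computation is correct and is exactly one of the two routes the paper itself indicates (it states the lemma as "easily verified by direct application of Equation~\nn{Dform} or the modified Leibniz rule~\nn{ls} in tandem with~\nn{DX}"), carried out in full: the weights, the factor $\smash{\tfrac{2}{d+2w}}$ in the Leibniz correction, the evaluation $X^A\hD_AT=wT$, and the final conversion $D_A=(d+2w)\hD_A$ on the weight-$(w+1)$ object all check out. The only caveat worth a footnote is that the $\hD$-based argument formally excludes the exceptional weights $w=-\tfrac d2$ and $w=1-\tfrac d2$, where one falls back on polynomiality in $w$ or on the direct computation from~\nn{Dform}, as you already propose as a cross-check.
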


\subsection{Conformal hypersurfaces and tractors}\label{htractors}

The basis for a conformal hypersurface calculus~\cite{Goal,Grant,Stafford,YuriThesis,GW15}
is a unit tractor
object~$N^A\in \Gamma(\ct M)|_\Sigma$ (from~\cite{BEG}) corresponding to the conformal unit conormal~$\hat n$, termed the {\em normal tractor}.
This  is defined along~$\Sigma$, in a choice of scale,  by
\begin{equation}\label{normaltractor}
[N^A]= \begin{pmatrix}
0\\\, \hat n^a\\ -H
\end{pmatrix}\Rightarrow N_A N^A =1\, .
\end{equation}
The subbundle~$\ct M^\top $ orthogonal to the normal tractor (with respect to the tractor metric~$h$)
along~$\Sigma$
is canonically isomorphic to the intrinsic hypersurface tractor bundle~$\ct\Sigma$, see~\cite{BrGoCNV} and~\cite[Section 4.1]{Goal}.  This is  the
conformal tractor analog
of the Riemannian isomorphism between the intrinsic tangent bundle~$T\Sigma$ and the subbundle~$TM^\top\!$ of
~$TM|_\Sigma$ orthogonal to~$\hat n^a$ along~$\Sigma$.
This isomorphism identifies these bundles and   thus we use the same abstract index for~$\ct M$ and~$\ct \Sigma$. In explicit computations in a given choice of scale,  we  will  need  to
relate sections given in corresponding splittings of the ambient and hypersurface tractor bundles.
 In terms of sections expressed in a scale~$g\in \cc$ (which determines $\bar g\in\bar\cc$), the isomorphism is given by 
\begin{equation}\label{Tisomorphism}
\big[V^A\big]_g:=\begin{pmatrix}v^+\\[1mm]v_a\, \\[2mm]v^-\end{pmatrix}\mapsto
\begin{pmatrix}
v^+\\[1mm]v_a-\hat n_aH v^+\\[2mm]v^-+\frac12 H^2 v^+
\end{pmatrix}=\big[U^A{}_B\big]_{\bar g}^g\, \big[ V^B\big]_g=:\big[\bar V^A\big]_{\bar g}\, ,
\end{equation}
where~$V^A\in\Gamma (\ct M^\top)$, $\bar V^A\in  \Gamma(\ct \Sigma)$ and
 the~$SO(d+1,1)$-valued matrix
$$
\big[U^A{}_B\big]_{\bar g}^g:=\begin{pmatrix}1&0&\ 0\ \\[2mm]-\hat n_a H&\delta_a^b&0\\[3mm]-\frac12 H^2&\hat n^b H&1\end{pmatrix}\, .
$$
In the above
 the canonical isomorphism between~$T^*M^\top\big|_\Sigma$ and~$T^*\Sigma$ defined by the unit  normal vector~$\hat n$ is used
to identify sections of these bundles. 

The ambient tractor connection~$\nabla$ obeys an analog of the classical Gau\ss\ formula relating it to the intrinsic tractor connection $\nablab$ of $(\Sigma,\cc)$ which is
known as the Fialkow--Gau\ss\ formula (\cite{Grant, Stafford, YuriThesis} or in our current notation~\cite[Proposition 7.14]{GW15}) because the Fialkow tensor of Equation~\nn{Fialkow} encodes the difference between these two connections along~$\Sigma$. As in the Gau\ss\ case, the tractor-coupled operator $\nabla^\top:=\nabla -\hat n \nabla_{\hat n}$ is well-defined along~$\Sigma$ irrespective of how tensors are extended to the ambient.
For future use, we record some identities for this operator acting on the normal tractor:
\begin{lemma}
Along~$\Sigma$, tangential, tractor-coupled gradients of the normal tractor are given in a choice of scale by
\begin{equation}
\label{nablaN}
\nabla^\top_a N^C =
\begin{pmatrix}
0\\ \IIo_{ac} \\[1mm] -\frac{\nablab.\IIo_a}{\dbs\!-1}\ 
\end{pmatrix}\, ,\quad
\nablat_a\nablat_b N^C = \left( \begin{array}{c} -\IIo_{ab} \\[1mm] \nablat_a\IIo_{bc} - \frac{1}{\dbs\!-1}\bar g_{ac}\nablab.\IIo_b \\[1mm]
- \frac1{\dbs\!-1}\nablat_a\nablab.\IIo_b  -\IIo_b^c \Rho^\top_{ac} \end{array} \right) \, ,
\end{equation}
while the tangential, tractor-coupled operator $\Delta^{\!\top}:=g^{ab}\nabla^\top_a \nabla^\top_b$ gives
\begin{equation}\label{boxN}\Delta^{\!\top} N^C =\begin{pmatrix}
0\\[1mm] \frac{\dbs\!-2}{\dbs\!-1}\nablab.\IIo_c-n_c K
\\[2mm]
-\frac{\nablab.\nablab.\IIo+(\dbs\!-1)\Rho_{ab}\IIo^{ab}}{\dbs\!-1}
\end{pmatrix}\, .\end{equation}
\end{lemma}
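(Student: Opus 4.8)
The plan is to prove the three formulae by direct computation in the fixed scale $g\in\cc$, starting from the explicit expression $[N^C]=(0,\hat n^c,-H)$ of Equation~\nn{normaltractor} together with the component formula~\nn{trconn} for the tractor connection. Because $\nablat=\nabla-\hat n\,\nabla_{\hat n}$ is a tangential operator along $\Sigma$, I may extend $\hat n_a$ and $H$ off $\Sigma$ however is convenient---say by a unit defining function, so that $\nabla_{\hat n}\hat n_a=0$---and the final expressions will be independent of this choice. First I would compute $\nabla_a N^C$ slot by slot from~\nn{trconn}: the top slot is $-\hat n_a$, the middle slot is $\nabla_a\hat n_b-H\bar g_{ab}$, and the bottom slot is $-\nabla_a H-\Rho_{ab}\hat n^b$. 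Subtracting $\hat n_a\,\nabla_{\hat n}N^C$ then collapses the top slot to $0$, turns the middle slot into $\nablat_a\hat n_b-H\bar g_{ab}=\II_{ab}-H\bar g_{ab}=\IIo_{ab}$ along $\Sigma$, and turns the bottom slot into $-\nablat_a H-\big(\Rho_{ab}\hat n^b-\hat n_a\Rho(\hat n,\hat n)\big)$.

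The only genuine computation at this stage is the identity $\nablat_a H+\Rho_{ab}\hat n^b-\hat n_a\Rho(\hat n,\hat n)=\tfrac{1}{\db-1}\,\nablab^b\IIo_{ab}$. I would derive it by contracting the Codazzi--Mainardi Equation~\nn{Main}: the $\bar g^{ac}$ trace of $\nablab_{[a}\II_{b]c}$, after splitting $\II=\IIo+H\bar g$, gives $\tfrac12\big(\nablab.\IIo_b-(\db-1)\nablab_b H\big)$, while the same trace of the ambient curvature term gives $\tfrac12\,\stackrel{\hat n}{\Ric}{}^{\top}_b$; converting Ricci to Schouten via $\Ric_{bd}=(d-2)\Rho_{bd}+g_{bd}\J$ and using $\db-1=d-2$ yields the claim. (A quick sanity check: a further intrinsic divergence of this identity reproduces Equation~\nn{LapH}.) Feeding this into the bottom slot establishes the first display in~\nn{nablaN}.

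For the remaining two formulae I would apply the coupled tangential connection a second time to the tractor-valued one-form just obtained, again using~\nn{trconn} on the tractor index and the Levi-Civita connection on the remaining indices, and once more subtracting the $\hat n_a\nabla_{\hat n}$ term. Slot by slot this produces the top slot $-\IIo_{ab}$, the middle slot $\nablat_a\IIo_{bc}-\tfrac{1}{\db-1}\bar g_{ac}\,\nablab.\IIo_b$, and the bottom slot $-\tfrac{1}{\db-1}\nablat_a\nablab.\IIo_b-\IIo_b{}^c\Rho^{\top}_{ac}$, discarding throughout the pieces proportional to $\hat n_a$. Tracing with $g^{ab}$---which may be replaced by $\bar g^{ab}$ here, since $\nablat_b N^C$ is $\hat n$-orthogonal in $b$ and $\hat n^a\II_a{}^b=0$---then gives $\Delta^{\!\top}N^C$: the top slot vanishes because $\IIo$ is trace-free, the bottom slot assembles into $-\tfrac{1}{\db-1}\big(\nablab.\nablab.\IIo+(\db-1)\Rho_{ab}\IIo^{ab}\big)$, and in the middle slot one converts the ambient tangential divergence $g^{ab}\nablat_a\IIo_{bc}$ to the intrinsic $\nablab.\IIo_c$ using the Gau\ss\ formula~\nn{II} extended to rank-two tensors; the $\hat n$-normal correction in that conversion is precisely $-\hat n_c\,\II^{bd}\IIo_{bd}=-\hat n_c K$, which is the source of the $-n_c K$ term in~\nn{boxN}.

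I do not expect a conceptual obstacle: the argument is essentially bookkeeping. What needs care is organizational---tracking which $\hat n$-normal pieces are killed by $\nablat$ (those carrying the differentiated index) versus which genuinely survive (those carrying only a ``slot'' index, such as the $\hat n_c K$ above), and translating correctly between $\nablat$ and the intrinsic connection $\nablab$ each time indices get contracted. A slightly more structural variant would handle the last translation by invoking the Fialkow--Gau\ss\ formula for the tractor connection, but the direct route via~\nn{trconn} and~\nn{II} is the most transparent.
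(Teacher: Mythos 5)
Your computation is correct and follows essentially the same route as the paper: a direct slot-by-slot application of the tractor connection formula~\nn{trconn} to $[N^C]=(0,\hat n^c,-H)$, followed by the trace, with careful bookkeeping of which normal pieces are killed by the tangential projection (your identification of the surviving $-\hat n_c K$ term in the middle slot of $\Delta^{\!\top}N^C$ is exactly the point that needs care, and you get it right). The only difference is that the paper simply cites \cite[Corollary 6.7]{GW15} for the first identity, whereas you rederive it from the traced Codazzi--Mainardi Equation~\nn{Main}; that makes your argument self-contained without changing its substance.
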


\begin{proof}
Since $\nabla^\top$ is tangential, the first identity follows directly from~\cite[Corollary 6.7]{GW15}. The third identity follows directly from the second, both of which   only require a direct application of Equation~\nn{trconn}. \end{proof}

\begin{remark}
Note that  expressions such as $\nabla^\top_{a} \IIo_{bc}$ in the above Lemma are well-defined along~$\Sigma$ since $\nabla^\top$ is tangential. It is obviously possible to further develop the second and third identities  
using the Gau\ss\ formula~\nn{II} and (for $\bar d\geq 3$) the Fialkow--Gau\ss\ Equation~\nn{Fialkow}. 
\end{remark}

\subsection{Singular Yamabe problem asymptotics and defining densities}\label{geom-Sc}

Given a hypersurface~$\Sigma$, a section~$\sigma\in\Gamma(\ce[1])$ 
is said to be a {\it defining density} for~$\Sigma$ if~$\Sigma=\Z(\sigma)$ and~$\nabla \sigma$ is nowhere vanishing along~$\Sigma$, where $\nabla$ is the Levi-Civita connection for some $g\in\cc$.
The conformal analog of the 
exterior derivative of a defining function is then provided by the tractor field
\begin{equation}\label{sctrac-def}
I^A_\si:=\hD^A \si
\stackrel{g}{=} (\sigma,\nabla_a \si,-\frac{1}{d}(\Delta +\J)\si)=:(\sigma,n_a,\rho) \, .
\end{equation}
 In  Riemannian signature
 for any defining density~$\sigma$, we have that
\begin{equation*}\label{ge0}
I_\sigma^2>0
\end{equation*}
holds in a neighbourhood of~$\Sigma$. More generally, any section $\sigma\in \Gamma(\ce M[1])$ such that~$I^A_\sigma$ is nowhere vanishing is called a {\it scale}, $I^A_\sigma$ is its {\it scale tractor} and the data $(M,\cc,\Sigma)$ is an {\it almost Riemannian structure} (or geometry). The following proposition, whose proof follows directly from  Equations~\nn{sctrac-def} and~\nn{trmet}, shows that almost Riemannian geometries are a natural setting for the singular Yamabe problem:
\begin{proposition}[see~\cite{Goal}]\label{I2-prop}
For~$\si\in \Gamma(\ce[1])$ the quantity~$S(\si)$  of Equation~\nn{Ytwo} is the squared length of the corresponding scale tractor: 
\begin{equation}\label{Isq}
S(\si)=I^2_\si:= h_{AB}I^A_\si I^B_\si.
\end{equation}
\end{proposition}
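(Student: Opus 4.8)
The plan is to verify the claimed identity in a single, arbitrarily chosen scale $g\in\cc$; this suffices because both sides are manifestly conformally invariant. On the left, $S(\si)$ was already observed to be conformally invariant immediately below Equation~\nn{Ytwo}. On the right, $I^2_\si$ is the $h$-norm of the scale tractor $I^A_\si=\hD^A\si$, and $h$ is conformally invariant while $\hD^A$ sends the conformally invariant density $\si\in\Gamma(\ce M[1])$ to a genuine (invariant) section of $\cT^A M[0]$; hence $I^2_\si$ is a well-defined function on $M$ independent of the scale used to compute it.

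First I would insert the explicit splitting of the scale tractor recorded in Equation~\nn{sctrac-def}, namely $[I^A_\si]_g=\big(\si,\ \nabla_a\si,\ -\tfrac1d(\Delta+\J)\si\big)$, which itself comes from Definition~\ref{hD} and the formula~\nn{Dform} specialised to weight $w=1$ (so that the normalising factor $d+2w-2$ equals $d$). Next I would apply the tractor metric formula~\nn{trmet} to the pairing of $I^A_\si$ with itself: in the splitting $(\sigma,\mu,\rho)$ one has $h(U,U)=2\si\rho+\bg_{ab}\mu^a\mu^b$, so here the two ``top$\times$bottom'' contributions each give $\si\cdot\big(-\tfrac1d(\Delta+\J)\si\big)$, while the ``middle'' term gives $\bg_{ab}\nabla^a\si\,\nabla^b\si=(\nabla\si)^2$. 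Collecting terms yields $I^2_\si=(\nabla\si)^2-\tfrac2d\,\si\,(\Delta+\J)\si$.

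Finally, comparing with the defining expression for $S(\si)$ in Equation~\nn{Ytwo} and substituting $\J=\Sc/(2(d-1))$ (recorded in Section~\ref{conventions}), the two right-hand sides coincide term by term, which completes the proof; the concluding remark $S(\si)=-\Sc^{\g}/(d(d-1))$ then follows from part~(2) of Problem~\ref{AYP} with $\g=\si^{-2}\bg$. There is no substantive obstacle here: the argument is a direct substitution. The only points demanding care are the normalisation bookkeeping — that $\si$ carries weight exactly $1$, so that the factor $1/(d+2w-2)$ in $\hD^A$ becomes $1/d$ and reproduces precisely the coefficient $2/d$ appearing in $S(\si)$ — together with the sign conventions in the tractor connection~\nn{trconn} underlying the expression~\nn{sctrac-def} for $I^A_\si$.
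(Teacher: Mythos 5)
Your proposal is correct and follows exactly the paper's route: the paper states that the result ``follows directly from Equations~\nn{sctrac-def} and~\nn{trmet},'' i.e.\ by inserting the splitting $[I^A_\si]_g=(\si,\nabla_a\si,-\tfrac1d(\Delta+\J)\si)$ into the tractor metric and comparing with $S(\si)$, which is precisely your computation. Your added remarks on conformal invariance and the weight bookkeeping for $\hD^A$ are sound but not needed beyond the direct substitution.
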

Indeed, the Yamabe equation~\nn{Ytwo}
 now reads simply
$$
I_\sigma^2=1\, .
$$
Thus, the asymptotic version of the singular Yamabe Problem~\ref{AYP}
is stated as follows:
\begin{problem}\label{I2-prob} 
Find a smooth defining density~$\bar \sigma$ such that
\begin{equation}\label{ind}
I^2_{\bar\si}=1 + \bar{\sigma}^{\ell} A_\ell\, ,
\end{equation}
for some smooth~$A_\ell\in \Gamma(\ce[-\ell])$,
where~$\ell \in\mathbb{N}\cup\infty$ is as high as possible.
\end{problem}

The above problem was solved in~\cite{CRMouncementCRM,GW15}, that result  is captured by the following Theorem whose proof may be found in~\cite{GW15}:
\begin{theorem}\label{bigformaggio}
Let $\sigma$ be a defining density for a hypersurface~$\Sigma$ and $\sigma_0:=\sigma/\sqrt{I^2_\sigma}$, then there exist smooth densities $A_k\in\Gamma(\ce \Sigma[-k])$ such that
\begin{equation}\label{sigmabar}
\bar \sigma =\sigma_0\, 
\Big(1 + A_1 \sigma_0 + A_2 \sigma_0^2 + \cdots+ A_{d}\sigma_0^d
 \Big)
\end{equation} 
solves 
$$
I_{\bar \sigma}^2=1+\sigma_0^d B_\sigma\, ,
$$
for some smooth $B_\sigma\in \Gamma(\ce M[-d])$.
For $1\leq k\leq d$ the densities $A_k$ are determined by the recursion
$$\bar\sigma_k=
\bar\sigma_{k-1}\left[1-\frac d2\frac{I_{\bar\sigma_{k-1}}^2-1}{(d-k)(k+1)}\right]\, ,
$$
where $\bar\sigma_{k-1}:=\sigma_0(1+\sum_{j=1}^{k-1}\sigma^j
A_j)$ solves $I^2_{\bar\sigma_{k-1}}=1+\sigma^{k-1} C$ for smooth $C\in \Gamma(\ce M[1-k])$.
Moreover, for any true scale $\tau\in \Gamma(\ce M[1])$,
$$
\sigma^\prime=\bar\sigma
\left[1+\frac d2 \log(\bar \sigma/\tau)\, \frac{I_{\bar\sigma}^2-1}{d+1}\right]
$$
solves
$$
I_{\sigma^\prime}^2=1+\sigma^{d+1}\big(C^\prime+D\log(\sigma/\tau)+D^\prime \log^2(\sigma/\tau)\big)
$$
for smooth $C^\prime,D,D^{\prime}\in \Gamma(\ce M[-d-1])$. Finally, given two defining functions  $\sigma$ and $\tilde \sigma$ for~$\Sigma$, then 
$$
B_\sigma\big|_\Sigma=B_{\tilde\sigma}\big|_\Sigma=:\B_d\, .
$$
\end{theorem}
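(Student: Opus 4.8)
The plan is to compare the two canonical approximate solutions $\bar\sigma$ and $\bar{\tilde\sigma}$ that the recursion of Theorem~\ref{bigformaggio} produces from the given defining densities $\sigma$ and $\tilde\sigma$, to show that $\bar\sigma$ and $\bar{\tilde\sigma}$ agree to order $d$ along $\Sigma$, and to conclude that the associated obstruction densities have the same restriction to $\Sigma$. Throughout, ``$O(\varrho^{k})$'' abbreviates ``$\varrho^{k}$ times a smooth density of the appropriate weight'', and I write $\tilde\sigma_0:=\tilde\sigma/\sqrt{I^2_{\tilde\sigma}}$. The engine of the argument is an \emph{indicial identity}: because $S(\sigma)=I^2_\sigma$ is \emph{quadratic} in $\sigma$ (Proposition~\ref{I2-prop}, Equation~\nn{Ytwo}), for any defining density $\varrho$ with $(\nabla\varrho)^2=1+O(\varrho)$, any integer $m\ge1$, and any weight $-m$ density $b$, one has
$$
I^2_{\varrho(1+\varrho^{m}b)}-I^2_\varrho=\frac{2(m+1)(d-m)}{d}\,\varrho^{m}b+O(\varrho^{m+1})\,.
$$
Indeed, setting $\delta\varrho:=\varrho^{m+1}b$ (a weight~$1$ density, on which $\hD$ is linear) gives $I^2_{\varrho+\delta\varrho}-I^2_\varrho=2\,h(I_\varrho,\hD\delta\varrho)+S(\delta\varrho)$ with $S(\delta\varrho)=O(\varrho^{2m})=O(\varrho^{m+1})$; expanding $h(I_\varrho,\hD(\varrho^{m+1}b))$ in a scale via $I^A_\varrho\stackrel{g}{=}(\varrho,\nabla_a\varrho,-\tfrac1d(\Delta+\J)\varrho)$ (Equation~\nn{sctrac-def}), the pairing~\nn{trmet}, and the $\hD$-formula~\nn{Dform}, the Laplacian in the bottom slot of $\hD$ contributes a factor $m(m+1)$ which combines with the middle-slot term $(\nabla^a\varrho)\nabla_a(\varrho^{m+1}b)$ to give the stated leading coefficient. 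The decisive feature is that this coefficient is nonzero exactly for $1\le m\le d-1$ and vanishes at $m=d$, matching both the denominators $(d-k)(k+1)$ appearing in the recursion of Theorem~\ref{bigformaggio} and the fact that the obstruction first appears at order $\sigma_0^{d}$.

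Next I would establish order-by-order rigidity. Since $\bar\sigma$ and $\bar{\tilde\sigma}$ are defining densities for the same $\Sigma$, their ratio is a smooth positive function near $\Sigma$, so we may expand $\bar{\tilde\sigma}=\bar\sigma\big(c_0+\sum_{j\ge1}\bar\sigma^{j}c_j\big)$ with $c_j$ of weight $-j$. On $\Sigma$ we have $S(\bar\sigma)|_\Sigma=(\nabla\bar\sigma)^2|_\Sigma$ (the remaining term of~\nn{Ytwo} being proportional to $\bar\sigma$), and likewise for $\bar{\tilde\sigma}$; since $I^2_{\bar\sigma}=1=I^2_{\bar{\tilde\sigma}}$ there, this gives $(\nabla\bar\sigma)^2\=(\nabla\bar{\tilde\sigma})^2$, and as both gradients are compatibly oriented conormals, $c_0\=1$. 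Absorbing $c_0-1$ into $\bar\sigma c_1$, we may take $c_0=1$. Now induct: if $c_1=\cdots=c_{k-1}=0$ with $1\le k\le d-1$, then applying the indicial identity with $\varrho=\bar\sigma$, $m=k$, $b=c_k$ (the tail $\bar\sigma^{k+1}c_{k+1}+\cdots$ contributing only at order $\bar\sigma^{k+1}$) yields $I^2_{\bar{\tilde\sigma}}-I^2_{\bar\sigma}=\tfrac{2(k+1)(d-k)}{d}\bar\sigma^{k}c_k+O(\bar\sigma^{k+1})$, while $I^2_{\bar\sigma}-1$ and $I^2_{\bar{\tilde\sigma}}-1$ are both $O(\bar\sigma^{d})\subseteq O(\bar\sigma^{k+1})$ (using that $\bar{\tilde\sigma}$ and $\bar\sigma$ are comparable near $\Sigma$). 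As the coefficient is nonzero, $c_k\=0$; folding $c_k$ into $\bar\sigma c_{k+1}$ and iterating to $k=d-1$ gives
$$
\bar{\tilde\sigma}=\bar\sigma\big(1+\bar\sigma^{d}c_d+O(\bar\sigma^{d+1})\big)\,.
$$

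Finally I would conclude. With $\delta\sigma:=\bar{\tilde\sigma}-\bar\sigma=\bar\sigma^{d+1}c_d+O(\bar\sigma^{d+2})$, the indicial identity at $m=d$---where the coefficient vanishes---together with $S(\delta\sigma)=O(\bar\sigma^{2d})$ and the $O(\bar\sigma^{d+2})$ tail contributing only at order $\bar\sigma^{d+1}$, gives $I^2_{\bar{\tilde\sigma}}-I^2_{\bar\sigma}=O(\bar\sigma^{d+1})$. On the other hand $I^2_{\bar\sigma}-1=\sigma_0^{d}B_\sigma$ and $I^2_{\bar{\tilde\sigma}}-1=\tilde\sigma_0^{d}B_{\tilde\sigma}$; since $\sigma_0=\bar\sigma+O(\bar\sigma^2)$ we get $\sigma_0^{d}=\bar\sigma^{d}+O(\bar\sigma^{d+1})$, and similarly, using $\bar{\tilde\sigma}=\bar\sigma(1+O(\bar\sigma^{d}))$, also $\tilde\sigma_0^{d}=\bar\sigma^{d}+O(\bar\sigma^{d+1})$. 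Subtracting the two expressions for $I^2-1$ and using boundedness of $B_\sigma,B_{\tilde\sigma}$ leaves $\bar\sigma^{d}(B_{\tilde\sigma}-B_\sigma)=O(\bar\sigma^{d+1})$, hence $B_{\tilde\sigma}-B_\sigma=O(\bar\sigma)$, so $B_\sigma\=B_{\tilde\sigma}=:\B_d$.

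The step I expect to be the real work is the order-by-order rigidity, and within it the careful justification of the indicial identity: tracking the density weights and the $O(\varrho^{m+1})$ remainders through the $\hD$-formula, and in particular verifying that the degeneracy of the leading coefficient occurs \emph{exactly} at $m=d$. This coincidence---obstruction order equal to degeneracy order---is what simultaneously creates the obstruction and forces the independence statement; everything else is bookkeeping with expansions in powers of the defining density.
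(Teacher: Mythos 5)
First, note that the paper does not actually prove Theorem~\ref{bigformaggio}: it is quoted from~\cite{GW15} (``whose proof may be found in~\cite{GW15}''), so there is no in-paper argument to compare against. Judged on its own merits, the core of your proposal is sound and is in the spirit of the $\mathfrak{sl}(2)$/indicial analysis that the cited work uses (cf.\ the commutator identity $[I_\sigma\cdot D,\sigma]=-I_\sigma^2(d+2w)$ recorded in Section~\ref{LRO}). I checked your indicial identity: polarizing $S$ via $h(I_\varrho,\hD\,\delta\varrho)$ and expanding with~\nn{sctrac-def},~\nn{trmet} and~\nn{Dform} does give the leading coefficient $\tfrac{2(m+1)(d-m)}{d}$, with $S(\delta\varrho)=O(\varrho^{2m})$ harmless for $m\ge 1$; and your order-by-order rigidity plus the degeneracy at $m=d$ correctly yields $B_\sigma\=B_{\tilde\sigma}$. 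That part of the argument is complete and correct.

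The genuine gap is that you have only proved the \emph{last} sentence of the theorem. Two substantive claims are left unproved. (i) Existence and the explicit recursion: your indicial identity is indeed the right engine (the correction $-\tfrac d2\,\tfrac{I^2_{\bar\sigma_{k-1}}-1}{(d-k)(k+1)}$ is exactly calibrated to cancel the leading error via the coefficient $\tfrac{2(k+1)(d-k)}{d}$), but you never run this induction, verify the base case $I^2_{\sigma_0}=1+O(\sigma)$, or confirm that the produced $A_k$ are smooth densities of the stated weights restricted to $\Sigma$; this is short but must be written out. (ii) The log-corrected scale $\sigma'$: this is entirely missing, and it is not a corollary of what you proved. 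It requires a separate computation of $h(I_{\bar\sigma},\hD(\bar\sigma^{d+1}\log(\bar\sigma/\tau)\,b))$, in which differentiating the logarithm produces a term $\propto\bar\sigma^{d}b$ that survives precisely because the polynomial indicial coefficient vanishes at $m=d$ — this is how one improves the error from $O(\sigma^{d})$ to $O(\sigma^{d+1})$ at the cost of $\log$ and $\log^2$ terms, and one must also track why no higher powers of the logarithm appear. Finally, a small point of hygiene in the part you did prove: your induction uses $(\nabla\bar\sigma)^2=1+O(\bar\sigma)$; this should be justified from $I^2_{\bar\sigma}=1+O(\bar\sigma^{d})$ together with the fact that the non-gradient term of $S$ in~\nn{Ytwo} carries an explicit factor of $\bar\sigma$.
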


The last statement of this theorem implies that the  density $\B_d$ is uniquely determined by $(M,\cc,\Sigma)$. Since it obstructs a smooth all orders solution to the singular Yamabe problem  we term it the {\it obstruction density}.
The solution $\bar \sigma(\sigma)$  is determined uniquely by the recursion up to the order of the obstruction. Thus, up to the given order,~$\bar \sigma$ is a canonical scale  defining the hypersurface~$\Sigma$ and 
whose scale tractor has unit length to accuracy $\sigma^d$,
 so is termed a {\it conformal unit defining density}.

\subsection{Conformal hypersurface invariants and canonical extensions}\label{cinv}

A Riemannian hypersurface invariant $\Pre(\Sigma;g):=\Pre(s;g)|_\Sigma$ with the property $\Pre(\Omega s,\Omega^2 g)$ $=$ $\Omega^w$ $\Pre(s;g)$ is termed a {\it conformal hypersurface covariant}. 
This determines an invariant section $\Pre(\sigma,\bg)\in \Gamma(\ce M[w])$ where $\sigma$ is a defining density.
Thus $\Pre(\sigma,\bg)|_\Sigma$ is called a {\it conformal hypersurface invariant}. 
As for the Riemannian case, this definition extends to tensor and therefore also tractor-valued invariants. 

Since the conformal unit scale $\bar \sigma$ is a unique defining density,  
up to the addition of  terms~$\bar \sigma^{d+1} S$ for $S\in\Gamma(\ce M[-d])$, it follows that any conformal density $\Pre(\bar \sigma(\sigma),\bg)$, where $\bar \sigma(\sigma)$ is determined in terms a given defining density $\sigma$ via Equation~\nn{sigmabar} of Theorem~\ref{bigformaggio}, gives a conformal hypersurface invariant $\Pre(\bar \sigma(\sigma),{\bm g})|_\Sigma$ so long as the formula for $\Pre$ involves jets of $\bar \sigma$ of order~$\leq d$. 
(In fact, $\Pre(\bar \sigma(\sigma),{\bm g})$ is a density-valued preinvariant.)
This construction  provides {\it holographic formulae} for  conformal hypersurface invariants, see~\cite[Section 6.2]{GW15}.

\begin{example}
The scale tractor for the conformal unit defining density gives a holographic formula for the normal tractor 
$$
I_{\bar\sigma}^A|_\Sigma=\hD^A \bar \sigma|_\Sigma=N^A\, .
$$
This result was first proved in~\cite{Goal}.
\end{example}

Moreover, in the above example we may view the conformal unit defining density as providing a canonical extension $I^A_{\bar \sigma}$ of the normal tractor~$N^A$. A short computation (in a choice of scale) shows that  
$$
\nabla_a I^B_{\bar\si} =\begin{pmatrix}0\\ \nabla_a n_b + \Rho_{ab} \bar\sigma + g_{ab} \rho\\\star \end{pmatrix}\, ,
$$
where $\rho:=-\frac1{d}\, (\Delta \bar \sigma + \J \bar \sigma)$ as in Equation~\nn{sctrac-def}. Comparing this with Equation~\nn{nablaN} gives a holographic formula for the trace-free second fundamental form
$$
\IIo_{ab}=(\nabla_a n_b + \Rho_{ab} \bar\sigma + g_{ab} \rho)|_\Sigma\, .
$$
Hence we have the following canonical extensions of the trace-free second fundamental form and rigidity density
\begin{equation}\label{canexts}
\IIo_{ab}:=\nabla_a n_b + \Rho_{ab} \bar\sigma + g_{ab} \rho\, \mbox{ and }\, 
K:=(\nabla_a n_b + \Rho_{ab} \bar\sigma + g_{ab} \rho)(\nabla^a n^b + \Rho^{ab} \bar\sigma + g^{ab} \rho)\, .
\end{equation}
The above formul\ae\ are now   well-defined 
in $M$ rather than only along~$\Sigma$. These extensions are canonical up to a finite order determined by Theorem~\ref{bigformaggio}.

\subsection{The Laplace--Robin operator and tangential operators}\label{LRO}

By contracting the scale tractor and Thomas D-operator,
we can build a new operator that plays two roles: that of the  ambient Laplace operator and of  a conformally invariant boundary Robin-type Dirichlet-plus-Neumann operator~\cite{GoSigma}.
Thus we call the operator
$$
I\cdot D:\Gamma(\ct^\Phi M[w])\longrightarrow \Gamma(\ct^\Phi M[w-1])\, ,
$$
the {\it Laplace--Robin} operator. Calculated in some scale $g\in\cc$
\begin{equation}\label{IdotD}
\begin{split}
I\cdot D=
-\sigma \Delta + (d+2w-2)\big[\nabla_n-\frac wd(\Delta \sigma)\big]-\frac{2w}{d}(d+w-1)\sigma\J\, ,
\end{split}
\end{equation}
where $n=\nabla\sigma$. Hence this is a Laplace-type operator which is  degenerate  along $\Sigma=\Z(\sigma)$. 
Indeed, along $\Sigma$ the Laplace--Robin operator becomes first order. In particular, for a conformal unit defining scale,
along~$\Sigma$ and  in a choice of scale we have
$$
I_{\bar \sigma}\cdot D=
(d+2w-2)\big(\nabla_n-w H\big)\, .
$$
Hence, for $w\neq 1-\frac d2$ and in the spirit of the Section~\ref{cinv}, along~$\Sigma$ the Laplace-Robin operator is a operator-valued holographic formula for the Robin operator $\delta_{\hat n}$ as given   in  Equation~\nn{Robin} (recall that for a conformal unit defining density, along $\Sigma$ one has $\nabla \sigma=\hat n$). 
Given a natural density, tensor or tractor field~$\bar f$ along~$\Sigma$, it is often the case that we can fix an extension~$f$ off~$\Sigma$, that is canonical  to some order
using the conformal unit defining density, such as those given in Equation~\nn{canexts}). In that case we will denote the Robin operator by
\begin{equation*}\label{deltaR}
\delta_{\rm R}\bar f=\left\{
\begin{array}{cl}I_{\bar\sigma}\cdot\hD f\big|_\Sigma\, ,&w\neq 1-\frac d2\, ,\\[3mm]
\nabla_{\hat n} f\big|_\Sigma+\frac{d-2}2\, H\bar f\, ,&w=1-\frac d2\, .
\end{array}
\right.
\end{equation*}
In light of this, we will often use the same notation for $f$ and $\bar f$.

 Combining the conformal unit defining density and the Laplace--Robin operator allows successive normal derivatives of canonically extended conformal hypersurface invariants to 
 be defined  up to the order of the obstruction. 
 The canonical extensions of the normal tractor and intrinsic canonical tractor are the scale tractor $I^A_{\bar \sigma}$ and the ambient canonical tractor~$X^A$, respectively.
 The following Lemma gives one normal derivative acting on these:
 \begin{lemma}
 The Robin operator acting on the canonically extended canonical and normal tractors gives:
 \begin{equation}\label{deltaRX}\delta_{\rm R} X^A=N^A\, ,\qquad 
 \delta_{\rm R} N^A=\frac{\ K X^A\, }{\db-1}\, \, .
 \end{equation}
\end{lemma}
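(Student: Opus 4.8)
The plan is to compute both Robin derivatives directly from the formula $\delta_{\rm R}\bar f = I_{\bar\sigma}\cdot\hD f\big|_\Sigma$ together with the modified Leibniz rule of Proposition~\ref{leib-fail}, using the key facts that $I^A_{\bar\sigma}$ is the canonical extension of $N^A$ (so $I^A_{\bar\sigma}|_\Sigma = N^A$, $N^2=1$), that $X^A$ has weight $1$, that $\hD^A X^B = h^{AB}$ (Equation~\nn{DX}), and that $\hD_A I^A_{\bar\sigma} = \hD_A\hD^A\bar\sigma$ with $\bar\sigma|_\Sigma=0$.

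First I would handle $\delta_{\rm R}X^A$. Since $X^A$ has weight $w=1$, the operator $I_{\bar\sigma}\cdot\hD$ is the appropriate (non-Yamabe-weight) version, and $I_{\bar\sigma}\cdot\hD X^A = I_{\bar\sigma\,B}\hD^B X^A = I_{\bar\sigma\,B}h^{BA} = I^A_{\bar\sigma}$. Restricting to $\Sigma$ gives $I^A_{\bar\sigma}|_\Sigma = N^A$, which is the first claimed identity. (Equivalently one can read this off the scale-dependent formula for $I\cdot D$ in Equation~\nn{IdotD} acting on the splitting $[X^A]=(0,0,1)$, using $\sigma|_\Sigma=0$ and $\nabla\sigma|_\Sigma=\hat n$; either route is short.)

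Next, for $\delta_{\rm R}N^A$ I would use that $N^A$ is canonically extended by the scale tractor $I^A_{\bar\sigma} = \hD^A\bar\sigma$, and apply the Laplace--Robin operator. Since $I^A_{\bar\sigma}$ has weight $0$, I can instead compute $I_{\bar\sigma}\cdot\hD$ acting on $\bar\sigma$ first and then take $\hD^A$, or apply $\hD^B$-Leibniz to $I_{\bar\sigma\,B}\hD^B\hD^A\bar\sigma$. The cleanest approach: write $I_{\bar\sigma}\cdot\hD\,\hD^A\bar\sigma$ and use Proposition~\ref{leib-fail} to move the contracted $\hD$ past $\hD^A$; the genuinely second-order/curvature pieces combine, via the Yamabe-type equation $I^2_{\bar\sigma}=1+O(\bar\sigma^d)$, into something proportional to $X^A$ times a lower-order density. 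Evaluating on $\Sigma$ and matching against the splitting $[N^A]=(0,\hat n^a,-H)$, using Equation~\nn{nablaN} for $\nabla^\top_a N^C$ and the identity $K=\IIo^{ab}\IIo_{ab}$, pins the coefficient to $\frac{K}{\bar d-1}$, giving $\delta_{\rm R}N^A = \frac{K\,X^A}{\bar d-1}$.

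The main obstacle is bookkeeping of weights and the Leibniz-failure correction term: $\bar\sigma$ has weight $1$, $\hD^A\bar\sigma$ has weight $0$, and the Yamabe weight $w=1-\tfrac d2$ must be avoided at each stage, so one must track which instance of $\hD$ versus $D$ is legitimate and keep the $-\frac{2}{d+2w_1+2w_2-2}X^A(\hD_B T_1)(\hD^B T_2)$ term from Proposition~\ref{leib-fail}, which is precisely what produces the $X^A$ on the right-hand side. Once the correction term is identified as the only surviving contribution on $\Sigma$ (the ``Laplacian'' piece $-\sigma\Delta$ in Equation~\nn{IdotD} vanishes there, and the purely tangential part is governed by $\Delta^{\!\top}N^C$ in Equation~\nn{boxN}, whose top slot vanishes and whose middle slot is divergence-type), the coefficient $\tfrac{1}{\bar d-1}$ drops out and the proof is complete.
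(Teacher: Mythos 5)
Your treatment of the first identity is correct and is exactly the paper's argument: at weight $1$ one has $I_{\bar\sigma}\cdot\hD X^A=I_{\bar\sigma\,B}\hD^BX^A=I^A_{\bar\sigma}$ by~\nn{DX}, and restriction to $\Sigma$ gives $N^A$.

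For the second identity there is a genuine gap. Since $I^A_{\bar\sigma}$ has weight $0$, Equation~\nn{IdotD} reduces along $\Sigma$ to $\delta_{\rm R}N^A=\nabla_nI^A_{\bar\sigma}\big|_\Sigma$ with no residual Laplacian or $H$ term, so the whole content of the lemma is the computation of one \emph{normal} derivative of the scale tractor. The quantities you propose to ``match against'' --- $\nabla^\top_aN^C$ in~\nn{nablaN} and $\Delta^{\!\top}N^C$ in~\nn{boxN} --- are purely \emph{tangential} derivatives and carry no information about $\nabla_n I^A_{\bar\sigma}$; they cannot pin down the coefficient $\tfrac{1}{\db-1}$. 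The paper instead computes $\nabla_nI^A_{\bar\sigma}$ slot-by-slot from the tractor connection~\nn{trconn} and closes the argument with the two normal-derivative identities $\nabla_nn_a=Hn_a$ and $\nabla_n\rho=\Rho(n,n)+\tfrac{K}{\db-1}$ of Equation~\nn{nablann} (imported from \cite[Lemmas 6.1 and 6.6]{GW15}); it is the second of these that produces $\tfrac{K}{\db-1}$, and your proposal contains no substitute for it. Your alternative route via Proposition~\ref{leib-fail} could in principle work --- applying the modified Leibniz rule to $\hD^A(I_{\bar\sigma}^2)$ with $w_1=w_2=0$ gives $2I_{\bar\sigma\,B}\hD^AI^B_{\bar\sigma}=\hD^A(I_{\bar\sigma}^2)+\tfrac{2}{d-2}X^A(\hD_CI_{\bar\sigma\,B})(\hD^CI^B_{\bar\sigma})$, and with $\hD^A(I_{\bar\sigma}^2)\big|_\Sigma=0$ from $I^2_{\bar\sigma}=1+O(\bar\sigma^d)$, the identification $(\hD_CI_B)(\hD^CI^B)=P_{CB}P^{CB}=K$ (the canonical extension in Equation~\nn{holo-form}) and $d-2=\db-1$ one lands on the right answer --- but this produces $I_B\hD^AI^B$, not the required $I_B\hD^BI^A$, so you must additionally control the antisymmetric part $\hD^{[A}\hD^{B]}\bar\sigma$ along $\Sigma$. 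Neither that step, nor the vanishing of $\hD^A(I^2_{\bar\sigma})\big|_\Sigma$, nor the identification of the correction term with $K$ is carried out in your sketch; as written the decisive coefficient is asserted rather than derived.
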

 
\begin{proof}
The first identity is a direct consequence of Equation~\nn{DX}.
For the second we compute in a scale
$$
\delta_{\rm R} N^A=\nabla_n I^A_{\bar \sigma}\big|_\Sigma=\begin{pmatrix}
0\\[1mm]
\nabla_n n_a -H n_a\\[1mm] \nabla_n\rho -\Rho(n,n)
\end{pmatrix}\, .
$$
To complete the proof we need  the normal derivatives of the canonical extensions of  $n_a$ and $\rho$  determined by $I^A_{\bar\sigma}$.
These were computed in~\cite[Lemmas 6.1 and 6.6]{GW15} and, along~$\Sigma$  are:
\begin{equation}\label{nablann}\nabla_n n_a = H n_a\, ,\qquad
\nabla_n \rho=\Rho(n,n)+\frac{K}{\db-1}\, .
  \end{equation} 
\end{proof}

The Laplace--Robin  operator has one further crucial role to play. In~\cite{GW}, it was shown that by viewing the scale~$\sigma$ as an operator mapping
$\Gamma(\ct^\Phi M[w])\to\Gamma(\ct^\Phi M[w+1])$ and acting by multiplication, then the commutator of this with the Laplace--Robin operator acting on weight~$w$ tractors obeys
$$
[I_\sigma\cdot D,\sigma]=-I_\sigma^2 \, (d+2w)\, .
$$
This identity underlies a solution-generating ${\frak sl}(2)$ algebra. A particular consequence of this is that the operator
$$
{\mathcal P}_{k}^\sigma:=\Big(-\frac1{I_\sigma^2}\, I_\sigma\cdot D\Big)^k
$$
is tangential when acting on weight $w=\frac{k-d+1}2$ tractors. Specializing to the conformal unit defining density gives a differential operator
\begin{equation*}
\label{Pk}
{\sf P}_k:\Gamma\big(\ct^\Phi M\big[\frac{k-d+1}{2}\big]\big)\Big|_\Sigma\longrightarrow \Gamma\big(\ct^\Phi M\big[\frac{-k-d+1}{2}\big]\big)\Big|_\Sigma\, ,\end{equation*}
which is completely determined by the data $(M,\cc,\Sigma)$~\cite[Section 8.1]{GW15}.
We call this the 
{\it extrinsic conformal Laplace operator}. For $k$ even the operator ${\sf P}_k$ is $(\Delta^{\!\top})^{\frac k2}+{\rm L.O.T.}$, up to multiplication by a non-vanishing constant, where ``L.O.T.'' stands for terms of lower derivative order involving both intrinsic and extrinsic curvature quantities.

\section{Singular Yamabe obstruction densities}\label{spaces}

Given a unit conformal defining density, the obstruction density~$\B_{\bar d}$ has a simple holographic formula whose leading structure is given in terms of the extrinsic conformal Laplace operator~\cite[Theorem 8.11]{GW15}:

\begin{theorem}
The obstruction density
is given by 
\begin{equation}\label{holo-form}
  \B_{\bar d}= \frac{2}{\db!(\db+1)!}\, \bar D_A \Big[\Sigma^A_B\Big({\sf P}_{\dbs} \! N^B + (-1)^{\dbs\!-1} \big[\bar I\cdot D^{\dbs\!-1}(X^B K)\big]\big|_\Sigma\Big)\Big]\, ,
\end{equation}
where the rigidity density $K$ is canonically extended off~$\Sigma$ by the formula~$K=P_{AB}P^{AB}$ with~$P^{AB}:=\hD^A\bar I^B$. Also, the projector $\Sigma^A_B:=\delta^A_B-N^A N_B$.
\end{theorem}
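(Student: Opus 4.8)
The plan is to obtain the holographic formula \nn{holo-form} directly from the defining-density recursion of Theorem~\ref{bigformaggio}, by re-packaging the iterated Laplace--Robin operations appearing there as a single tangential operator acting on the scale tractor of the conformal unit defining density. The engine is the solution-generating $\mathfrak{sl}(2)$ algebra recalled above: multiplication by $\sigma$, the Laplace--Robin operator $I_\sigma\cdot D$ of Equation~\nn{IdotD}, and the weight operator, with the key relation $[I_\sigma\cdot D,\sigma]=-I_\sigma^2(d+2w)$ on weight-$w$ tractors; together with the facts that $\mathcal P_k^\sigma:=\bigl(-\tfrac1{I_\sigma^2}I_\sigma\cdot D\bigr)^k$ is tangential on weight-$\tfrac{k-d+1}2$ tractors, and that the scale tractor of the conformal unit defining density restricts to the normal tractor, $I^A_{\bar\sigma}|_\Sigma=N^A$.

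First I would make $\B_{\bar d}$ explicit from the recursion. With $\sigma_0=\sigma/\sqrt{I^2_\sigma}$ and $\bar\sigma_{k-1}=\sigma_0\bigl(1+\sum_{j<k}\sigma_0^jA_j\bigr)$ solving $I^2_{\bar\sigma_{k-1}}-1=\sigma_0^{k-1}C_{k-1}$, the recursion $\bar\sigma_k=\bar\sigma_{k-1}\bigl[1-\tfrac d2\tfrac{I^2_{\bar\sigma_{k-1}}-1}{(d-k)(k+1)}\bigr]$ advances the defect to $I^2_{\bar\sigma_k}-1=\sigma_0^kC_k$, with $B_\sigma=C_d|_\Sigma$. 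Commuting the explicit factor $\sigma_0^{k-1}$ past $I_\sigma\cdot D$ by means of $[I_\sigma\cdot D,\sigma]$, one checks that the passage $C_{k-1}\mapsto C_k$ is, up to the scalar $\tfrac{d}{2(d-k)(k+1)}$, a single further application of $-\tfrac1{I^2_\sigma}I_\sigma\cdot D$. Iterating $\bar d=d-1$ times and restricting to $\Sigma$ --- where $\mathcal P^{\bar\sigma}_{\bar d}$ is genuinely tangential, which is exactly why the restriction is well defined since $I^2_{\bar\sigma}=1$ holds only to order $\sigma_0^{\bar d+1}$ --- collapses the cascade of scalars to the overall prefactor $\tfrac{2}{\bar d!(\bar d+1)!}$.

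Next I would pin down the tractor argument and the inhomogeneous term. Since $\hD^A\bar\sigma=I^A_{\bar\sigma}$ restricts to $N^A$, the tangential operator acts on the canonically extended normal tractor, which is where ${\sf P}_{\bar d}N^B$ comes from. Applying $I_{\bar\sigma}\cdot D$ to $I^B_{\bar\sigma}$ and using the modified Leibniz rule of Proposition~\ref{leib-fail} on $I^2_{\bar\sigma}=I_CI^C$ yields $I_CP^{CB}=\tfrac12\hD^B(I^2_{\bar\sigma})+\tfrac1{d+2}X^BK$, with $P^{AB}=\hD^AI^B_{\bar\sigma}$ and $K=P_{AB}P^{AB}$; near $\Sigma$ the first term is subleading, so each application of the Laplace--Robin operator produces, besides the ``extrinsic-Laplacian'' part feeding into ${\sf P}_{\bar d}N^B$, a contribution proportional to $X^BK$. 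Re-summing these using $\hD^AX^B=h^{AB}$ and $D_A(X^AT)=(d+w)(d+2w+2)T$ produces exactly the single string $(-1)^{\bar d-1}\bigl[\bar I\cdot D^{\bar d-1}(X^BK)\bigr]\big|_\Sigma$, the sign being the $\bar d$ factors of $-\tfrac1{I^2}$. Finally, contracting with the intrinsic Thomas operator $\bar D_A$ --- which kills the $N^A$-direction along $\Sigma$, hence the projector $\Sigma^A_B=\delta^A_B-N^AN_B$ --- turns the resulting weight-$(-\bar d)$ tractor into the weight-$(-d)$ density $\B_{\bar d}$.

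I expect the principal obstacle to be the combinatorial bookkeeping: verifying that the product $\prod_{k=1}^{\bar d}\tfrac{d}{2(d-k)(k+1)}$ telescopes precisely to $\tfrac{2}{\bar d!(\bar d+1)!}$, and --- more delicately --- that the $X^BK$ remainders generated at every stage of the iteration, after repeatedly commuting $X^B$ and the explicit powers of $\sigma_0$ through $I_{\bar\sigma}\cdot D$, reassemble into exactly one string $\bar I\cdot D^{\bar d-1}(X^BK)$ with the stated sign rather than an order-dependent sum of such terms. This also requires checking that every leftover $\sigma_0$-proportional term dies upon restriction to $\Sigma$, so that the formula depends only on the jets of $\bar\sigma$ of order $\le\bar d$ that Theorem~\ref{bigformaggio} controls; the argument is uniform in the parity of $\bar d$, the only change being that ${\sf P}_{\bar d}$ is a power of $\Delta^{\!\top}$ when $\bar d$ is even and $\IIo^{ab}\nabla^\top_a\nabla^\top_b$-leading when $\bar d$ is odd.
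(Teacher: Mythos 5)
The paper does not actually prove this theorem: Equation \nn{holo-form} is imported verbatim from \cite[Theorem 8.11]{GW15}, so the only ``proof'' here is that citation, and your attempt must be judged against the argument in that reference. Your toolbox is the right one --- the $\mathfrak{sl}(2)$ relation $[I_\sigma\cdot D,\sigma]=-I_\sigma^2(d+2w)$, tangentiality of ${\mathcal P}^{\bar\sigma}_{\bar d}$ at weight zero, $I^A_{\bar\sigma}|_\Sigma=N^A$, and the Leibniz failure of $\hD^A$ as the source of the $X^BK$ terms --- and this is indeed the machinery of \cite{GW15}. But two of your central steps fail or are unsubstantiated as written.

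First, the combinatorics. The product you propose, $\prod_{k=1}^{\bar d}\tfrac{d}{2(d-k)(k+1)}$, equals $\tfrac{d^{\bar d}}{2^{\bar d}\,\bar d!\,(\bar d+1)!}$, which is \emph{not} $\tfrac{2}{\bar d!\,(\bar d+1)!}$ (already for $d=2$ one gets $\tfrac12$ versus $1$). This is not stray bookkeeping: it shows that one step of the recursion of Theorem~\ref{bigformaggio} is \emph{not} ``a single further application of $-\tfrac1{I^2_\sigma}I_\sigma\cdot D$ up to the scalar $\tfrac{d}{2(d-k)(k+1)}$''. The factor $\tfrac{\bar d!(\bar d+1)!}{2}$ in \nn{holo-form} instead arises from applying the Laplace--Robin operator directly to the defect $I^2_{\bar\sigma}-1=\sigma_0^{d}B_\sigma$ and evaluating the iterated commutators $[I\cdot D,\sigma^j]$ on the weight-$(-d)$ density $B_\sigma$ (each stripping of a $\sigma$ contributes a factor of the form $j(d+1-j)$), which must then be equated with the same operator string applied to $I_AI^A$ via Proposition~\ref{leib-fail}. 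Second, the reassembly of the $X^BK$ remainders into the single term $(-1)^{\bar d-1}\bigl[\bar I\cdot D^{\bar d-1}(X^BK)\bigr]\big|_\Sigma$ is precisely the nontrivial content of the second summand of the theorem, and you only assert it: each of the $\bar d$ applications of $I\cdot D$ spawns its own remainder, carrying a different explicit power of $\sigma$ and a different weight, and showing that these collapse to exactly one string upon restriction (using $\hD^AX^B=h^{AB}$, Equation~\nn{DXT}, and the degeneration of $I\cdot D$ to a first-order operator along $\Sigma$) is where the real work lies. (Minor: the coefficient in your identity $I_CP^{CB}=\tfrac12\hD^B(I^2_{\bar\sigma})+\tfrac1{d+2}X^BK$ should be $\tfrac1{d-2}$ by Proposition~\ref{leib-fail}.) As it stands the proposal is a plausible outline of the strategy of \cite{GW15}, not a proof.
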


The main aim of this section is to compute ~${\mathcal B}_3$ using the above holographic formula.
The canonical extension of the rigidity density~$K$ stated in the theorem is the same as that given in Equation~\nn{canexts}.
Also, in hypersurface dimensions $\db=2,3$, the extrinsic conformal Laplacians acting on weight zero tractors are given by
\begin{equation}\label{P23}
\begin{split}
{\sf P}_2\ &=\ \ \Delta^{\!\top}\, ,\\[1mm]
{\sf P}_3\ &=-8\, 
\IIo^{ab}\nabla_a^\top\nabla_b^\top-8\, \nablab.\IIo^b\nabla_b^\top+4n^a\Big( {\mathcal R}^{\bm \sharp}_{ab}\circ\nabla^b
+ \, \nabla^b \circ {\mathcal R}_{ab}^{\bm \sharp}\Big)\, .
\end{split}
\end{equation}

\begin{proposition}\label{P2N}
If $\db=2$, then
\begin{eqnarray*}
{\sf P}_2 N^C&=&-
X^C L_{ab}\IIo^{ab}- N^C K\, ,\quad \db=2\, .
\end{eqnarray*}
\end{proposition}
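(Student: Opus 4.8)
The plan is to reduce everything to data already recorded in the excerpt. Since $N^C$ is a weight~$0$ tractor, ${\sf P}_2 N^C$ is defined, and by~\nn{P23} one has ${\sf P}_2=\Delta^{\!\top}$, so the whole computation is contained in formula~\nn{boxN} for $\Delta^{\!\top}N^C$. First I would specialize~\nn{boxN} to $\db=2$: the coefficient $\tfrac{\db-2}{\db-1}$ annihilates the tangential part of the middle slot and $\tfrac{1}{\db-1}=1$ in the bottom slot, so that in a scale $g\in\cc$ (conveniently taken to come from a unit defining function),
\begin{equation*}
{\sf P}_2 N^C\ \stackrel{g}{=}\ \big(\,0,\ -\hat n_c K,\ -\nablab.\nablab.\IIo-\Rho^\top_{ab}\IIo^{ab}\,\big)\, .
\end{equation*}

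Next I would write the claimed right-hand side in the same splitting. Using $[X^C]_g=(0,0,1)$ and $[N^C]_g=(0,\hat n_c,-H)$,
\begin{equation*}
-X^C L_{ab}\IIo^{ab}-N^C K\ \stackrel{g}{=}\ \big(\,0,\ -\hat n_c K,\ HK-L_{ab}\IIo^{ab}\,\big)\, .
\end{equation*}
The top slots (both zero) and the middle slots agree at once, so the proposition reduces to the scalar bottom-slot identity $HK-L_{ab}\IIo^{ab}=-\nablab.\nablab.\IIo-\Rho^\top_{ab}\IIo^{ab}$. But this is exactly the $\db=2$ branch of the definition~\nn{BGG} of $L_{ab}$, which reads $L_{ab}\IIo^{ab}=\nablab.\nablab.\IIo+\Rho^\top_{ab}\IIo^{ab}+H\,\tr\IIo^2$, together with $\tr\IIo^2=K$. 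Since the $g$-splitting of $\ct M$ is an isomorphism, agreement of the three slots in one scale is agreement of tractors, and the weight count (both sides have weight~$-2$) confirms consistency with the conformal invariance of ${\sf P}_2$.

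The argument is short, and the only point that needs attention is bookkeeping of the Schouten term: one should confirm that tracing the second identity of~\nn{nablaN} --- whose last slot carries $\IIo_b^c \Rho^\top_{ac}$ --- yields the tangentially projected ambient Schouten tensor contracted into $\IIo^{ab}$ (the distinction between $\Rho_{ab}$ and $\Rho^\top_{ab}$ being immaterial since $\IIo^{ab}$ is tangential and trace-free), and that this matches the tensor appearing in the $\db=2$ line of~\nn{BGG}. I expect no genuine obstacle here. Alternatively one could run the $\mathfrak{sl}(2)$/Laplace--Robin route, writing ${\sf P}_2 N^C=\big(-\tfrac{1}{I^2}\,I\cdot D\big)^2 N^C$ and applying the Robin-operator identities~\nn{deltaRX} twice; this is conceptually cleaner but requires carefully tracking the constants relating $I\cdot D$, $\delta_{\rm R}$ and $\Delta^{\!\top}$, and rests on the same inputs.
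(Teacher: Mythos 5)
Your argument is correct and is essentially the paper's own proof: the paper likewise specializes formula~\nn{boxN} to $\db=2$ via~\nn{P23} and then invokes the $\db=2$ branch of~\nn{BGG}, merely leaving implicit the slot-by-slot comparison with $[X^C]_g=(0,0,1)$ and $[N^C]_g=(0,\hat n^c,-H)$ that you spell out. The cancellation $HK-H\IIo_{ab}\IIo^{ab}=0$ in the bottom slot is exactly the step the paper compresses into ``Using Equation~\nn{BGG} the quoted result follows.''
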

\begin{proof}
Using Equations~\nn{P23} and~\nn{boxN}, we have for~$\db = 2$,
$$
{\sf P}_2N^C = \Delta^{\!\top} N^C = \begin{pmatrix} 0 \\ -n^c K \\ -\nablab.\nablab.\IIo - \Rho_{ab}\IIo^{ab} \end{pmatrix} \, .
$$
Using Equation~\nn{BGG} the quoted result follows.
\end{proof}
\begin{remark}
Our computation of ${\mathcal B}_3$ relies  on a~$\db = 3$ version of the above Proposition.
\end{remark}

The second term in Equation~\eqref{holo-form} involves normal derivatives of the canonically extended rigidity density~$K$. The following proposition provides the first of these.

\begin{proposition}\label{deltaRK}
The canonically extended trace-free second fundamental form~$\IIo$, rigidity density~$K$ and membrane rigidity density~$L$ obey 
\begin{equation}\label{deltaRK}
\begin{split}
\big(\delta_R \IIo_{ab}\big)^{\!\top} &= -\IIo^2_{ab} + \Whn_{ab} + \frac12 \bar g_{ab} K \, , \\[1mm]
\delta_R K\ \  &= -2(\db-2)L \, .
\end{split}
\end{equation}
\end{proposition}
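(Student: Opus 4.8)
The plan is to obtain both identities by evaluating the Robin operator $\delta_R$ on the canonical extensions of Equation~\nn{canexts}: on $\IIo_{ab}$ (a weight~$1$ density) through the tensor Robin formula~\nn{deltaRab}, and on the weight~$-2$ density $K$ through its scalar version as in Definition~\ref{ROBIN}. I work in a choice of scale and write $\bar I^A=(\bar\sigma,n_a,\rho)$ for the scale tractor of the conformal unit defining density, with $n_a=\nabla_a\bar\sigma$ and $\rho=-\tfrac1d(\Delta+\J)\bar\sigma$; the facts used repeatedly are $\bar I^A|_\Sigma=N^A$ (hence $\rho|_\Sigma=-H$ and $|n|^2|_\Sigma=1$) together with the normal-derivative identities $\nabla_n n_a=Hn_a$ and $\nabla_n\rho=\Rho(n,n)+\tfrac{K}{\db-1}$ along $\Sigma$ from Equation~\nn{nablann}.

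For the first identity, since $\IIo_{ab}$ has weight~$1$, Equation~\nn{deltaRab} gives $\big(\delta_R\IIo_{ab}\big)^{\!\top}=\big(\nabla_n\IIo_{ab}+H\IIo_{ab}\big)^{\!\top}\big|_\Sigma$. Expanding $\nabla_n\IIo_{ab}$ from $\IIo_{ab}=\nabla_a n_b+\Rho_{ab}\bar\sigma+g_{ab}\rho$ and discarding $\bar\sigma$-multiples on $\Sigma$ leaves $\nabla_n\nabla_a n_b$, the term $\Rho_{ab}\nabla_n\bar\sigma|_\Sigma=\Rho^\top_{ab}$, and $g_{ab}\nabla_n\rho|_\Sigma$. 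I would handle $\nabla_n\nabla_a n_b$ by commuting derivatives, $\nabla_n\nabla_a n_b=\nabla_a\nabla_n n_b-(\nabla_a n^c)(\nabla_c n_b)+n^c[\nabla_c,\nabla_a]n_b$: the first term is the tangential derivative of $\nabla_n n_b|_\Sigma=Hn_b$, so $(\nabla_a\nabla_n n_b)^{\!\top}|_\Sigma=H\II_{ab}$; the middle term $-(\nabla_a n^c)(\nabla_c n_b)$ contributes $-\II^2_{ab}$ tangentially; and the curvature term is decomposed through $R=W+(\text{Schouten})$. The cancellations are then that the tangential Schouten part of the curvature term kills $\Rho^\top_{ab}$ and its double-normal part kills the $\bar g_{ab}\Rho(\hat n,\hat n)$ piece of $g_{ab}\nabla_n\rho$, leaving the Weyl double-normal contraction $\Whn_{ab}$, the quadratic pieces, and $\tfrac{K}{\db-1}\bar g_{ab}$; a final rearrangement with $\II_{ab}=\IIo_{ab}+H\bar g_{ab}$ collapses $H\II_{ab}+H\IIo_{ab}-\II^2_{ab}$ to $-\IIo^2_{ab}$, and setting $\db=3$ turns $\tfrac{K}{\db-1}$ into $\tfrac12 K$, giving the claimed formula.

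For the second identity, recall from Equation~\nn{canexts} that the canonical extension $K$ equals $\IIo_{ab}\IIo^{ab}$ in a scale (with $\IIo_{ab}$ the extension above), a weight~$-2$ density, so $\delta_R K=\big(\nabla_n K+2HK\big)\big|_\Sigma=\big(2\IIo^{ab}\nabla_n\IIo_{ab}+2HK\big)\big|_\Sigma$. As $\IIo^{ab}|_\Sigma$ is tangential, only $(\nabla_n\IIo_{ab})^{\!\top}|_\Sigma=\big(\delta_R\IIo_{ab}\big)^{\!\top}-H\IIo_{ab}$ contributes; substituting the result of the previous paragraph, the $\bar g_{ab}K$ term drops against the trace-free $\IIo^{ab}$ and one is left with $\delta_R K=-2\big(\tr\IIo^3-\IIo^{ab}\Whn_{ab}\big)$. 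Contracting the Fialkow--Gau\ss\ Equation~\nn{Fialkow} with $\IIo^{ab}$ identifies $\tr\IIo^3-\IIo^{ab}\Whn_{ab}=(\db-2)\tr(\IIo\F)=(\db-2)L$, which completes the argument.

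I expect the first identity to be the main obstacle, in particular the bookkeeping of the off-$\Sigma$ behaviour of the canonically extended conormal $n_a$: justifying that only the tangential part of $\nabla_a\nabla_n n_b$ is needed and that it equals $H\II_{ab}$, and keeping the Weyl and trace conventions consistent with the normalisations of $\Whn_{ab}$ and $\delta_R$. A cleaner but heavier alternative is to compute $\delta_R$ directly on the tractor $\nabla_a\bar I^B$ (whose $T^*M$-slot is $\IIo_{ab}$), commuting the Laplace--Robin operator past $\nabla_a$ at the cost of the tractor curvature $\mathcal{R}^{\bm\sharp}$ of Equation~\nn{curvature} and then using $\delta_R N^B=\tfrac{K X^B}{\db-1}$ from Equation~\nn{deltaRX}; I would keep this in reserve as a cross-check.
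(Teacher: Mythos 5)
Your argument is correct and follows essentially the same route as the paper: expand the canonical extension of $\IIo_{ab}$, commute derivatives to produce the curvature term $\Rn_{ab}$ and the $-\II^2_{ab}$ piece, use $\nabla_n n_a=Hn_a$ and $\nabla_n\rho=\Rho(n,n)+\tfrac{K}{\db-1}$ to see the Schouten cancellations, and then obtain the second identity from $\delta_RK=2\IIo^{ab}\delta_R\IIo_{ab}$ together with the contracted Fialkow--Gau\ss\ equation. The only (harmless) variation is that you evaluate $(\nabla_a\nabla_nn_b)^\top|_\Sigma=H\II_{ab}$ directly via tangentiality of $\nabla^\top_a$, where the paper instead rewrites $n^c\nabla_cn_b=-\nabla_b(\sigma\rho)$ using $I_{\bar\sigma}^2=1+O(\sigma^d)$.
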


\begin{proof}
We begin by computing a normal derivative of the canonical extension of~$\IIo_{ab}$:
\begin{equation}\label{nablanIIo}
\begin{split}
\nabla_n \IIo_{ab} =& \nabla_n\nabla_a n_b + g_{ab}\nablan \rho + \sigma \nablan\Rho_{ab} + \Rho_{ab}\nablan\sigma \\[1mm]
=& \Rn_{ab} - (\nabla_an_c)\nabla^cn_b - \nabla_a\nabla_b(\rho\sigma) + g_{ab}\nablan\rho + \Rho_{ab} + \sigma\big( \nablan-2\rho\big) \Rho_{ab} \\[1mm]
=&\Rn_{ab}  -\IIo^2_{ab}  + \rho\IIo_{ab} - 2n_{(a}\nabla_{b)}\rho + g_{ab}\nablan\rho + \Rho_{ab} \\[1mm]
&+\sigma\Big[\big(\nablan - 3\rho\big) \Rho_{ab} + 2\IIo_{c(a}\Rho_{b)}^c - \nabla_a\nabla_b\rho\Big] +  \mathcal{O}(\sigma^2) \, .
\end{split}
\end{equation}
Projecting onto  the tangential piece of this quantity along~$\Sigma$, and using Equation~\nn{nablann} we find
\begin{equation*}
\big(\nabla_n\IIo_{ab}\big)^{\!\top} \stackrel\Sigma= -\IIo^2_{ab} + \Wn_{ab} + \frac12\bar g_{ab} K - H\IIo_{ab} \, .
\end{equation*}
Using Equation~\nn{deltaRab}, we have
\begin{equation*}
\big( \delta_{\rm R}\IIo_{ab}\big)^{\!\top} \stackrel \Sigma= \big[ (\nablan + H) \IIo_{ab}\big]^\top \stackrel\Sigma = -\IIo^2_{ab} + \Wn_{ab} + \frac12 \bar g_{ab} K \, .
\end{equation*}
The second equation follows by noting that~$\delta_R K = 2\IIo^{ab}\delta_R\IIo_{ab}$ and using the above result.
\end{proof}

\begin{remark}
Recalling that the Willmore energy functional~\nn{I2} for embedded surfaces is the integral of the rigidity density~$K$ and its analog~\nn{I3} for embedded spaces is the integral of the membrane rigidity density~$L$, we see from Equation~\ref{deltaRK} that the Robin operator relates the two integrands for these functionals. It would be interesting to investigate whether this phenomenon holds in higher dimensions $\bar d=2k$ to $\bar d = 2k+1$ $(k\in {\mathbb Z}_{\geq 2})$.
\end{remark}

Using Propositions~\ref{deltaRK} and~\ref{P2N}, as well as Equations~\nn{deltaRX} and~\nn{DXT}, we arrive at an explicit formula for the obstruction density for surfaces.
\begin{proposition}\label{B2}
The   obstruction density for~$\db = 2$ is given by\begin{equation*}
\B_2 = -\frac13\,  L^{ab}\, \IIo_{ab}\, .
\end{equation*}
\end{proposition}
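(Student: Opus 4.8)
The plan is to specialise the holographic formula~\nn{holo-form} to $\db=2$ (equivalently $d=3$) and to evaluate the two tractor pieces inside $\bar D_A[\,\cdot\,]$ using the ingredients already assembled. For $\db=2$ the overall constant $\frac{2}{\db!(\db+1)!}$ equals $\frac16$, the sign $(-1)^{\db-1}$ is $-1$, and the power $\bar I\cdot D^{\db-1}$ reduces to a single Laplace--Robin operator $\bar I\cdot D$ (with $\bar I=I_{\bar\sigma}$ the scale tractor of the conformal unit defining density).

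The first piece is quoted directly from Proposition~\ref{P2N}: ${\sf P}_2 N^B=-X^B L_{ab}\IIo^{ab}-N^B K$. For the second piece I would compute $\big[\bar I\cdot D(X^B K)\big]\big|_\Sigma$ via the modified Leibniz rule~\nn{ls} of Proposition~\ref{leib-fail} applied to $T_1=X^B$ (weight $1$) and $T_2=K$ (weight $-2$), together with $\hD^A X^B=h^{AB}$ from~\nn{DX}. Since $X^B K$ has weight $-1$ and $d=3$, the normalisation $d+2w-2$ equals $-1$, so $\bar I\cdot D(X^B K)=-\,\bar I\cdot\hD(X^B K)$, and~\nn{ls} together with $\hD_C X^B=\delta_C^B$ gives
\[
\bar I\cdot\hD(X^B K)=\bar I^B K+X^B\big(\bar I\cdot\hD K\big)+2(\bar I\cdot X)\,\hD^B K\, .
\]
Restricting to $\Sigma$: $\bar I\cdot X=\bar\sigma$ vanishes there, $\bar I^B\big|_\Sigma=N^B$, and $\bar I\cdot\hD K\big|_\Sigma=\delta_{\rm R}K=-2(\db-2)L=0$ since $\db=2$ (Proposition~\ref{deltaRK}). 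Hence $\big[\bar I\cdot D(X^B K)\big]\big|_\Sigma=-N^B K$.

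Assembling, the bracket inside~\nn{holo-form} is ${\sf P}_2 N^B+(-1)\big(-N^B K\big)={\sf P}_2 N^B+N^B K=-X^B L_{ab}\IIo^{ab}$, the two $N^B K$ contributions cancelling exactly. Because $N\cdot X=0$ — immediate from~\nn{normaltractor} and the shape of the tractor metric~\nn{trmet} — the projector $\Sigma^A_B$ (which identifies $\ct M^\top|_\Sigma$ with $\ct\Sigma$, so that the intrinsic $\bar D_A$ may act) is trivial on $X^B$, i.e. $\Sigma^A_B X^B=X^A$; thus the argument of $\bar D_A$ is simply $-X^A L_{ab}\IIo^{ab}$. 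Applying the hypersurface analogue of~\nn{DXT}, namely $\bar D_A(X^A T)=(\db+w)(\db+2w+2)T$, to $T=L_{ab}\IIo^{ab}$ — whose weight is $-d$ (directly, since $\IIo^{ab}$ has weight $-d$ and $L_{ab}$ is weight-preserving, and also forced by the fact that $\bar D_A(X^A T)$ has the same weight as $T$ while $\B_2$ has weight $-d$) — produces the scalar factor $(\db-d)(\db-2d+2)=(-1)(-\db)=\db=2$, so that $\B_2=\frac16\cdot(-2)\,L^{ab}\IIo_{ab}=-\frac13\,L^{ab}\IIo_{ab}$.

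The only real obstacle is the weight-dependent bookkeeping in the middle step: keeping straight the normalisations relating $\bar I\cdot D$, $\bar I\cdot\hD$ and Cherrier's Robin operator $\delta_{\rm R}$, and checking that the non-Leibniz correction term in~\nn{ls} genuinely drops out — it survives only against $\bar I\cdot X$, which vanishes along $\Sigma=\Z(\bar\sigma)$, and this is exactly what collapses the computation to a substitution. As a consistency check the answer agrees with the Willmore invariant $-\frac13 L^{ab}\IIo_{ab}$ of Proposition~\ref{gradient2}, as it must, since for $\db=2$ the singular Yamabe obstruction density coincides with the Willmore invariant.
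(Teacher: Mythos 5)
Your proof is correct and follows essentially the route the paper intends: specialise the holographic formula~\nn{holo-form} to $\db=2$ and feed in Propositions~\ref{P2N} and~\ref{deltaRK} together with~\nn{DXT}, the two $N^BK$ contributions cancelling. The only cosmetic difference is that you evaluate $\big[\bar I\cdot D(X^BK)\big]\big|_\Sigma$ via the modified Leibniz rule~\nn{ls} (with the correction term killed by $\bar I\cdot X=\bar\sigma\stackrel\Sigma=0$), whereas the paper cites $\delta_{\rm R}X^A=N^A$ from~\nn{deltaRX}; both give $-N^BK$.
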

\noindent
Comparing the above with Proposition~\ref{gradient2}
verifies again that the obstruction density and the gradient of the Willmore energy~${\mathcal I}_2$ agree.

\subsection{Four-manifold  obstruction density}

Our goal is now to find hypersurface formula for the~$\db = 3$ obstruction density and thus prove Proposition~\ref{Calzone}. We begin by calculating~${\sf P}_3N^B$.
\begin{proposition}\label{P3N}
When~$\db = 3$,
$$
{\sf P}_3 N^B=4 \, B^\prime X^B +\frac43\, \big[U^{-1}\big]{}^{\!B}{}_{\!C} \bar D^C K-4\big( \delta_R K\big ) N^B\, ,
$$
where
\begin{equation}\label{B'}
\begin{split}
B'=&  
L^{ab}\big(2\IIo^2_{(ab)\circ}\!\!-\Whn_{ab}\big)
 -\IIo^{ab}\Bach_{ab}
 +\frac12 K^2
 -3W(\hat n,\IIo^2,\hat n)+\Whn_{ab}\Whn^{ab}
 +\frac12 \Whn^\top_{abc} \Whn^{abc}\, .
 \end{split}
 \end{equation}
\end{proposition}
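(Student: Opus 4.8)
The plan is to compute ${\sf P}_3 N^B$ directly from the holographic formula~\nn{P23} for ${\sf P}_3$. Since ${\sf P}_3$ is tangential, $N^B$ may be extended off $\Sigma$ by whatever rule is convenient; I would take a unit defining function (so the identities below hold in the sense of the $\stackrel1=$ of the text, in particular $\nabla_{\hat n}\hat n_a=0$ along $\Sigma$), whereupon ${\sf P}_3$ decomposes into the two Laplacian-type pieces $-8\IIo^{ab}\nabla^\top_a\nabla^\top_b$ and $-8\,\nablab.\IIo^b\,\nabla^\top_b$, and the curvature piece $4n^a\big({\mathcal R}^{\bm \sharp}_{ab}\circ\nabla^b+\nabla^b\circ{\mathcal R}^{\bm \sharp}_{ab}\big)$. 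The first two pieces are evaluated simply by inserting the slot-wise expressions for $\nabla^\top_a N^B$ and $\nabla^\top_a\nabla^\top_b N^B$ recorded in Equation~\nn{nablaN} and contracting with $\IIo^{ab}$, respectively $\nablab.\IIo^b$; this generates all the terms quadratic in $\IIo$ and its first and second tangential covariant derivatives, together with the $\tr(\IIo^2\bar P)$-type term and a $K^2$ term, modulo commutators $[\nabla^\top_a,\nabla^\top_b]$ that are handled exactly as in the proof of Proposition~\ref{LIIo2o} using the trace-free Codazzi--Mainardi Equation~\nn{tfcm}.

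For the curvature piece I would first use that ${\mathcal R}^{\bm \sharp}_{ab}$ is skew in $ab$, so $n^a{\mathcal R}^{\bm \sharp}_{ab}$ annihilates the normal direction: in $n^a{\mathcal R}^{\bm \sharp}_{ab}\nabla^b N^B$ only $\nabla^{\top b}N^B$ contributes (computed from Equation~\nn{nablaN} together with the action~\nn{curvature} of the tractor curvature), and in $n^a\nabla^b({\mathcal R}^{\bm \sharp}_{ab}N^B)$ the term in which $\nabla^b$ lands on ${\mathcal R}^{\bm \sharp}_{ab}$ has its normal-derivative component killed as well (this is where $\nabla_{\hat n}\hat n_a=0$ along $\Sigma$ is used), leaving $n^a\,\nabla^{\top b}{\mathcal R}^{\bm \sharp}_{ab}\,N^B$. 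Unpacking ${\mathcal R}^{\bm \sharp}_{ab}N^B$ via~\nn{curvature} and the splitting~\nn{normaltractor} of $N^B$, this last term produces tangential divergences of the ambient Weyl and Cotton tensors contracted into $\hat n$. These I would rewrite as hypersurface quantities using the Weyl-divergence identity~\nn{divW}, the Codazzi--Mainardi equations~\nn{Main} and~\nn{tfcm}, the trace-free Gau\ss\ decomposition of $W^\top_{abcd}$, the normal derivative of $\Whn_{ab}$ computed exactly as in Section~\ref{cfm}, and the conformal transformation identities~\nn{transform} underlying Lemma~\ref{hsb}; this is the step at which the ambient Cotton/Weyl-divergence contributions reorganise into the hypersurface Bach tensor $\Bach_{ab}$ and the Fialkow tensor~\nn{Fialkow}.

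Collecting the three tractor slots of ${\sf P}_3 N^B$ and comparing slot-by-slot with the right-hand side of the Proposition: the $\ce M[1]$-slot comes out as $8K$, which is immediate and fixes the coefficient of $[U^{-1}]^B{}_C\bar D^C K$ (with $\bar D^C K$ read off from the intrinsic Thomas $D$-operator applied to the weight $-2$ density $K$, $\db=3$); the $T^*M[1]$-slot, after subtracting the now-known $[U^{-1}]^B{}_C\bar D^C K$ contribution, is an $\hat n$-multiple which the identity $\delta_R K=-2(\db-2)L=-2L$ of Proposition~\ref{deltaRK} identifies as that of $-4(\delta_R K)N^B$; and the $\ce M[-1]$-slot, after subtracting the already-determined contributions of the $\bar D^C K$ and $N^B$ terms, produces the coefficient of $X^B$, which after trading the double divergence $\nablab^a\nablab^b\IIo^2_{(ab)\circ}$ for lower-derivative and curvature terms via Proposition~\ref{LIIo2o}, recognising the BGG operator $L^{ab}$ of Equation~\nn{BGG}, and packaging the curvature terms using Lemma~\ref{hsb} and the Fialkow Equation~\nn{Fialkow}, equals exactly $4B'$ with $B'$ as in~\nn{B'}.

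The step I expect to be the main obstacle is the conversion, carried out within the curvature piece, of the single covariant derivative of the ambient Weyl and Cotton curvature along $\Sigma$ into intrinsic hypersurface invariants: making all the ``lower-order'' Schouten and scalar-curvature contributions either cancel or recombine into the manifestly conformally invariant form of~\nn{B'} is delicate, and in particular one must correctly isolate the hypersurface Bach tensor, and keep the $\IIo^2_{(ab)\circ}$, $\Whn_{ab}$ and $\bar g_{ab}K$ pieces of $\delta_R\IIo_{ab}$ (Proposition~\ref{deltaRK}) separate from the genuine $X^B$-gradient terms.
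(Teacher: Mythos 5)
Your proposal follows essentially the same route as the paper: an explicit slot-by-slot evaluation of ${\sf P}_3N^B$ from Equation~\nn{P23} using Equations~\nn{nablaN},~\nn{curvature} and~\nn{trconn}, comparison of the top and middle slots with $\frac43[U^{-1}]^B{}_C\bar D^C K$ and $-4(\delta_R K)N^B$ via Proposition~\ref{deltaRK}, and reorganisation of the $X^B$-coefficient into $4B'$ using the trace-free Codazzi--Mainardi identity, Proposition~\ref{LIIo2o}, Equation~\nn{divW}, the Fialkow equation and the definition of $\Bach_{ab}$ from Lemma~\ref{hsb}. The handling of the curvature piece via skewness of $n^a{\mathcal R}^{\bm\sharp}_{ab}$ and the four-manifold identity $\nabla^bW_{bacd}=C_{cda}$ likewise matches the paper's computation, so the plan is correct as stated.
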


\begin{proof}
Using Equation~\nn{nablaN}, the first two terms in~${\sf P}_3N^C$ appearing in Equation~\nn{P23} can be written explicitly as
$$
 \left( \begin{array}{c} 8K \\[1mm] -8\IIo^{ab}\nablab_a\IIo_{bc} - 4\IIo_c^b\nablab.\IIo_b 
 +8n_c\big(\tr \IIo^3+HK\big)
 \\[2mm]
 4\IIo ^{ab}\nablab_a\nablab.\IIo_b  +
4\nablab.\IIo_a\nablab.\IIo^a
+8\tr(\IIo^2\Rho^\top) \end{array} \right) \, .
$$
To compute the remaining two terms of~${\sf P}_3N^C$, we make use of the tractor curvature and connection formul\ae\ in  Equations~\nn{curvature} and~\nn{trconn}, respectively, as well as Equation~\nn{nablaN}, and find
\begin{equation*}
\begin{split}
n^a\nabla^b \circ {\mathcal R}_{ab}^{\bm \sharp} N^C \, \, &\=
 \begin{pmatrix}
 0 \\ -\Wn_{abc}\IIo^{ab} \\[1mm]  - \nablab^b \Cn_b- W(n,P^\top,n)
 \end{pmatrix} \, , \\[2mm]
 n^a{\mathcal R}_{ab}^{\bm \sharp} \circ {\nabla}^b{}^\top N^C &\=\, \, 
 \begin{pmatrix}
 0 \\[1mm] -\Wn_{abc}\IIo^{ab}\\[1mm] -C(n,\IIo)
 \end{pmatrix}\, .
\end{split}
\end{equation*}
In the first equation we have made use of the four-manifold identity~$\nabla^bW_{bacd} = C_{cda}$. Orchestrating, we have
\begin{equation*}
{\sf P}_3N^C\! = \!
\begin{pmatrix}
8K \\[1mm] -8\IIo^{ab}\nablab_a\IIo_{bc} - 4\IIo_c^b\nablab.\IIo_b - 8W(n,\IIo,c)^\top
 +8n_c\big(\tr \IIo^3-W(n,\IIo,n)+HK \big) \\[1mm] 
 4\IIo ^{ab}\nablab_a\nablab.\IIo_b  \!+
4\nablab.\IIo_a\nablab.\IIo^a
\!+\!8\tr(\IIo^2\Rho^\top) \!-\! 4C(n,\IIo) \!-\!4\nablab^b\Cn_b\! - 4W(n,\Rho^\top\!,n)
\end{pmatrix}
 .
\end{equation*}
This expression can be  simplified by comparing it with the boundary Thomas D-operator acting on the rigidity density,
\begin{equation}\label{duck}
\frac 43\big[U^{-1}\big]{}^{\!B}{}_{\!C} \bar D^C \! K =
\begin{pmatrix}
1 & 0 & 0\\[1mm] n_b H & \delta^c_b & 0 \\[1mm] -\frac12H^2 &\! \!\!-n^c H & 1
\end{pmatrix}
\begin{pmatrix}
8K \\[1mm] -4\nablab_c K \\[1mm] -\frac 43(\bar \Delta\! - 2\bar \J)K
\end{pmatrix}
= \begin{pmatrix} 
8 K \\[1mm] -4\nablab_b K + 8n_b HK \\[1mm] -\frac43(\bar\Delta\! - 2\bar \J)K \!- \!4H^2K
\end{pmatrix} \, .
\end{equation}
 Here we have used the isomorphism~\nn{Tisomorphism}
 between sections of $\cT\Sigma$ and $\ct M^\top$.
To match this expression with terms appearing in~${\sf P}_3N^C$ as displayed above, we make use of the trace-free Mainardi Equation~\nn{tfcm} to obtain the  identity
 \begin{equation*}
 \begin{split}
 \frac12 \nablab_c K&=\IIo^{ab}\nablab_c\IIo_{ab}=\IIo^{ab}\nablab_a\IIo_{bc} + \frac12\IIo_c^b\nablab.\IIo_b^{\phantom{c}} +\Wn_{abc}^\top\IIo^{ab}\, . \end{split}
 \end{equation*}
Using this identity, Equation~\nn{nemo}, the Fialkow Equation~\nn{Fialkow} as well as Proposition~\ref{deltaRK} we have
\begin{equation*}
{\sf P}_3N^B = \frac43\big[U^{-1}\big]{}^{\!B}{}_{\!C} \bar D^C K -4N^B\delta_{\rm R} K + 4B^\prime X^B\, ,
\end{equation*}
with~$B^\prime$ given by
\begin{equation*}
\begin{split}
B^{\prime}&=\frac 23 \nablab^a\IIo^{bc}\nablab_a \IIo_{bc}\!+
(\nablab.\IIo_a)(\nablab.\IIo^a)
+\frac43\IIo^{ab}\bar\Delta\IIo_{ab}
\!-\frac43\bar \J K
 - \frac83\IIo^{ab}\nablab^c\Whn_{abc}^\top
-3W(n,\IIo^2\!,n)\\[1mm]&
+ \frac12 K^2
+\Wn_{ab}\Wn^{ab}
+2\IIo^{ab}\nablab^c\Wn_{bac}^\top
-\nablab^b \Cn_b
-W(n,\bar P,n)
-C(n,\IIo)-HW(n,\IIo,n)
\, .
\end{split}
\end{equation*}
It remains only to show that this last expression reduces to that stated in the proposition. This follows directly from Proposition~\ref{LIIo2o} and the identities
$$
\Cn_b\,\,  =\nablab^c \Wn_{cb}
 +\Wn_{acb}^\top\IIo^{ac}
 \Rightarrow
 \nablab^b \Cn_b\,\,  =\nablab^a\nablab^b \Wn_{ab}
 +\IIo^{ab}\nablab^c\Wn_{abc}^\top
 +\frac12\Wn_{abc}^\top
 \Wn_{abc}
 \, ;
$$
these  are direct consequences of Equation~\nn{divW} and~\nn{tfcm}. Noting that $n=\hat n$ along~$\Sigma$ completes the proof.
\end{proof}

We now turn to the second term\ in the holographic formula~\nn{holo-form}. We first need the following   technical result:
\begin{lemma}\label{DT}
Let $T\in \Gamma(\ct^\phi M[-2])$ and $\db = 3$. Then
$$
I\cdot D^2( X^CT) \stackrel\Sigma = -2 D^C T\, .
$$
\end{lemma}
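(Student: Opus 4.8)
The plan is to reduce the statement to two applications of the Laplace--Robin operator $I\cdot D$ in a scale (with $I=I_{\bar\sigma}$ the scale tractor of the conformal unit defining density $\bar\sigma$, and $\sigma:=\bar\sigma$), exploiting that for $\db=3$, so $d=4$, the tractor field $X^CT$ sits precisely at the Yamabe weight. Indeed $T$ has weight $-2=-\tfrac d2$, hence $X^CT$ has weight $-1=1-\tfrac d2$, and by the Remark following Definition~\ref{hD} the Thomas operator collapses to $D^A=-X^A\,\square_{Y}$ on this weight. Contracting with $I$ and using that the top slot of $I^A=\hD^A\sigma$ is $\sigma$, so that $I\cdot X=\sigma$ by \eqref{sctrac-def} and \eqref{trmet}, gives
$$
I\cdot D\,(X^CT)=I_A D^A(X^CT)=-(I\cdot X)\,\square_{Y}(X^CT)=-\sigma\,\square_{Y}(X^CT)\,.
$$

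For the second application, put $Z:=\square_{Y}(X^CT)$, a smooth tractor field of weight $-3$, and evaluate $I\cdot D(-\sigma Z)$ along $\Sigma$. Since $-\sigma Z$ has weight $-2$, specialising \eqref{IdotD} to $w=-2$, $d=4$ (so $d+2w-2=-2$) gives $I\cdot D=-\sigma\Delta-2\nabla_n-(\Delta\sigma)+\sigma\J$. Expanding $(I\cdot D)(-\sigma Z)$, every resulting contribution is divisible by $\sigma$ except the single term $2(\nabla_n\sigma)\,Z$ produced by $-2\nabla_n$; hence on $\Sigma=\Z(\sigma)$ only that term survives, $(I\cdot D)(-\sigma Z)\big|_\Sigma=2(\nabla_n\sigma)\big|_\Sigma\,Z\big|_\Sigma$. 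Since $n=\nabla\sigma$ we have $\nabla_n\sigma=(\nabla\sigma)^2$, which by \eqref{Ytwo} differs from $S(\sigma)=I^2_{\bar\sigma}$ (Proposition~\ref{I2-prop}) by a multiple of $\sigma$; as $I^2_{\bar\sigma}=1$ on $\Sigma$ (Theorem~\ref{bigformaggio}), we get $(\nabla\sigma)^2\big|_\Sigma=1$ and therefore
$$
I\cdot D^2(X^CT)\big|_\Sigma=2\,Z\big|_\Sigma=2\,\square_{Y}(X^CT)\big|_\Sigma\,.
$$

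It remains only to identify $\square_{Y}(X^CT)=-D^CT$; this is the sole point needing genuine verification, and it is a short slot computation. From \eqref{trconn}, $\nabla_aX^C$ is the middle-slot injector, i.e.\ $[\nabla_aX^C]_g=(0,g_{ac},0)$, so $[\nabla_b\nabla_a(X^CT)]_g=(-g_{ab}T,\ g_{ac}\nabla_bT+g_{bc}\nabla_aT,\ \nabla_b\nabla_aT-\Rho_{ab}T)$, whose trace for $d=4$ is $[\Delta(X^CT)]_g=(-4T,\ 2\nabla_cT,\ \Delta T-\J T)$; subtracting $\J\,(X^CT)=(0,0,\J T)$ and comparing with $[D^CT]_g=(4T,\,-2\nabla_cT,\,-\Delta T+2\J T)$ read off from \eqref{Dform} at weight $-2$ shows $\square_{Y}(X^CT)=-D^CT$ identically (the extra tractor indices carried by $T$ are inert spectators under the coupled connection). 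Combining this with the previous display yields $I\cdot D^2(X^CT)\stackrel\Sigma=-2D^CT$.

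I expect the main difficulty to be bookkeeping rather than conceptual: $T$ lies at the borderline weight $-d/2$, where $d+2w=0$ and the modified Leibniz rule of Proposition~\ref{leib-fail} is unavailable for $D^A(X^CT)$, so one is forced to use the Yamabe-weight collapse $D^A=-X^A\square_{Y}$ together with the explicit scale form of $I\cdot D$; tracking which composition-series slot each term populates, and the several weight-dependent constants in \eqref{Dform} and \eqref{IdotD}, is where care is needed.
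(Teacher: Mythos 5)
Your argument is correct and follows essentially the same route as the paper: the Yamabe-weight collapse $D^A=-X^A\square_Y$ at weight $1-\tfrac d2$ gives $I\cdot D(X^CT)=-\sigma\square_Y(X^CT)$, the second application of $I\cdot D$ along $\Sigma$ reduces (via $I^2_{\bar\sigma}|_\Sigma=(\nabla\sigma)^2|_\Sigma=1$) to $2\square_Y(X^CT)|_\Sigma$, and the slot computation with the tractor connection identifies $\square_Y(X^CT)=-D^CT$. You have simply written out the details that the paper's two-line proof leaves implicit, including the correct observation that the modified Leibniz rule is unavailable at this borderline weight.
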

\begin{proof}
The key to the proof is to note that for~$d = 4$, the tractor~$X^CT$ has ambient Yamabe weight~$w=-1$, so that
$$
I \cdot D(X^BT) = -\sigma \square_Y( X^B T)\, .
$$
Acting with the remaining~$I \cdot D \stackrel\Sigma= -2\delta_R$ (at this weight) and calculating explicitly using the tractor connection~\nn{trconn} gives the identity stated.
\end{proof}

Our result for two canonical normal derivatives of the rigidity density multiplied by the canonical tractor is given by the following: 

\begin{proposition} \label{DK}
Let~$\db = 3$. Then,
\begin{equation}\label{DBK}
I\cdot D^2( X^CK) \stackrel\Sigma= 4X^B B^{\prime\prime}
-\frac43\, \big[U^{-1}\big]{}^{\!B}{}_{\!C} \bar D^C K+4N^B\delta_R K \, ,
\end{equation}
where
\begin{equation}\label{B''}
\begin{split}
B^{\prime\prime}&=L^{ab}\, \IIo^2_{(ab)\circ}
 + \frac12 K^2
-4W(\hat n,\IIo^2,\hat n)
+\Whn_{ab}\Whn^{ab}
+\IIo^{ab}\IIo^{cd}W_{cabd}
+\frac12 \Whn_{abc}^\top \Whn^{abc}\, .
\end{split}
\end{equation}
\end{proposition}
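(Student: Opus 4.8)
The plan is to get the left-hand side from one structural reduction together with a slot-by-slot comparison, isolating the nontrivial scalar $B''$ only at the very end. The key first observation is that, since $\db=3$ means $d=4$ and the canonically extended rigidity density $K$ has weight $-2$, the tractor $X^CK$ sits at the ambient Yamabe weight $1-\tfrac{d}{2}=-1$. Hence Lemma~\ref{DT} applies verbatim with $T=K$ and gives
$$
I\cdot D^2(X^CK)\stackrel\Sigma=-2\,D^CK\, .
$$
Evaluating the Thomas--D operator in a scale $g\in\cc$ by Equation~\nn{Dform} with $w=-2$ (so that $d+2w-2=-2$) yields $D^CK\stackrel{g}{=}(4K,\ -2\nabla_cK,\ -(\Delta-2\J)K)$, so the task reduces to rewriting the restriction of $-2D^CK$ to $\Sigma$ in the three-term form claimed in the proposition.

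The top two slots are routine. Restricting to $\Sigma$, using that $K$ is a scalar density (so $\nabla_cK|_\Sigma=\nablab_cK+\hat n_c\nabla_nK$) together with the definition $\delta_{\rm R}K=(\nabla_n+2H)K$ of the Robin operator on a weight $-2$ density, one checks that the first two components of $-2D^CK|_\Sigma$ agree with those of $-\tfrac43\big[U^{-1}\big]{}^{B}{}_{C}\bar D^CK+4N^B\delta_{\rm R}K$; here the hypersurface components of $\bar D^CK$ and the transition matrix $\big[U^{-1}\big]{}^{B}{}_{C}$ of the isomorphism~\nn{Tisomorphism} are exactly those already recorded in Equation~\nn{duck}, and $N^B$ is given by~\nn{normaltractor}. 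Matching the bottom slots is then what \emph{defines} $B''$: the proposition becomes equivalent to the scalar identity
$$
B''=\tfrac12(\Delta-2\J)K\big|_\Sigma-\tfrac13(\bar\Delta-2\bar\J)K-H^2K+H\,\delta_{\rm R}K\, .
$$

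To prove this I would expand $K=\IIo^{ab}\IIo_{ab}$ using the canonical extension $\IIo_{ab}=\nabla_an_b+\Rho_{ab}\bar\sigma+g_{ab}\rho$ of Equation~\nn{canexts}, so that $\tfrac12\Delta K=\IIo^{ab}\Delta\IIo_{ab}+(\nabla_c\IIo_{ab})(\nabla^c\IIo^{ab})$, and likewise intrinsically for $\bar\Delta K$. Along $\Sigma$ one decomposes $\Delta$ and $\nabla$ into tangential and normal parts, converts tangential derivatives of $\IIo$ into intrinsic ones via the Gau\ss\ formula~\nn{II}, and supplies the normal derivatives from Equations~\nn{nablann} and~\nn{nablanIIo} together with Equation~\nn{deltaRK}. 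The Codazzi--Mainardi identity~\nn{tfcm}, the Fialkow--Gau\ss\ equation~\nn{Fialkow}, the Gau\ss\ equations~\nn{Gauss} (used to trade $\J,\bar\J$ and ambient Weyl contractions for hypersurface quantities) and the four-dimensional identity~\nn{divW} then dispatch the curvature terms. The decisive point is that the genuinely second-order-in-$\IIo$ terms that survive are precisely $\tfrac13(\nablab^a\IIo^{bc})(\nablab_a\IIo_{bc})+\tfrac12(\nablab.\IIo_a)(\nablab.\IIo^a)+\tfrac23\IIo^{ab}\bar\Delta\IIo_{ab}$, which Proposition~\ref{LIIo2o} together with Equation~\nn{nemo} identifies with $L^{ab}\IIo^2_{(ab)\circ}$ modulo curvature; gathering the remaining terms then produces~\nn{B''}.

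The main obstacle is this last computation, and within it the second normal derivative $\nabla_n^2\IIo_{ab}|_\Sigma$: one must differentiate~\nn{nablanIIo} once more and carefully retain its $\mathcal{O}(\bar\sigma)$ terms, since $\nabla_n\bar\sigma|_\Sigma=1$ feeds them back onto $\Sigma$ (and a further $\mathcal{O}(\bar\sigma^2)$ bookkeeping is required once $\Delta$ is applied). Keeping the attendant curvature algebra organized so that the second-order derivative terms recombine into the conformally invariant operator $L^{ab}$ acting on $\IIo^2_{(ab)\circ}$, rather than lingering as bare $\bar\Delta\IIo$ contributions, is where Proposition~\ref{LIIo2o} does the essential work and where the bulk of the effort lies.
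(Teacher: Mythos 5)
Your proposal is correct and follows essentially the same route as the paper: Lemma~\ref{DT} at Yamabe weight $-1$, slot-by-slot comparison of $-2D^CK$ against Equation~\nn{duck} to isolate $B''$ in the bottom slot, and then the second normal derivative of the canonically extended $K$ (via $\IIo^{ab}\nabla_n^2\IIo_{ab}$ and the cross term from~\nn{nablanIIo}) reorganised by Proposition~\ref{LIIo2o}. Your intermediate scalar identity for $B''$ is equivalent to the paper's Equation~\nn{B''1} after decomposing $\Delta K|_\Sigma$ into tangential and normal parts and applying the third Gau\ss\ equation, and you correctly locate the genuine difficulty in retaining the $\mathcal{O}(\bar\sigma)$ terms when differentiating~\nn{nablanIIo} a second time.
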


\begin{proof}
Using Lemma~\ref{DT} we must compare
$$
-2D^C K =
\begin{pmatrix}
-8K \\[1mm]
4\nabla_c K\\
2(\Delta-2\J)K
\end{pmatrix}
$$
 with Equation~\nn{duck}. This yields Equation~\nn{DBK} with
\begin{equation}\label{B''1}
B^{\prime\prime} = \frac12 \nabla_n^2 K + 2 H \delta_R K +\frac 16 \bar \Delta K
-\frac32 H^2 K -\frac13 \bar \J K -\frac14 K^2 -\Rho(n,n)K\, .
\end{equation}
It remains to show that this expression reduces to Equation~\nn{B''}. For that, we need to compute two normal derivatives of the canonically extended rigidity density~$K$. Returning to Equation~\nn{nablanIIo}, we see that the following normal derivatives of curvature terms appear in $\frac12 \nabla_n^2 K$:
\begin{align*}
\nabla_n \Rn_{ab} &+2\nabla_n \Rho_{ab}\\[1mm]
&= (\nablan n^d)R_{dabe}n^e + n^cn^d\nabla_c (R_{dabe}n^e)+2\nabla_n \Rho_{ab} \\[1mm]
&\= H \Rn_{ab} + (g^{cd} - \bar g^{cd})\nabla_c(R_{dabe}n^e)
+2\nabla_n \Rho_{ab} \\[1mm]
&= H\Rn_{ab} + \nabla^d (R_{dabc}n^c) + {\nabla}^\top_{d} \Rn_{b}{}^d{}_a+2\nabla_n \Rho_{ab} \\[1mm]
&= H\Rn_{ab} + (\nabla^d n^c) R_{dabc} - n^c(\nabla_b {{R_{dac}}^d} + \nabla_c {{R_{da}}^d}_b)  + {\nabla}^\top_d\Rn_b{}^d{}_a +2\nabla_n \Rho_{ab} \\[1mm]
&\= H\Rn_{ab} + (\IIo^{dc} + Hg^{dc})R_{dabc} + n^c\nabla_b R_{ac} -g_{ab}\nablan \J + {\nabla}^\top_d\Rn_b{}^d{}_a\ .
\end{align*}
In the fourth line above we  used the Bianchi identity for the Riemann tensor. Upon contraction with~$\IIo^{ab}$ the above yields
\begin{equation*}
\begin{split}
\IIo^{ab}\big(\nablan\Rn_{ab} + 2\nablan\Rho_{ab}\big)=&
\, \, W(n,\IIo^2,n)
-HW(n,\IIo,n)-3H\tr(\IIo\Rho)
+\IIo^{ab}\IIo^{cd}W_{cabd}\\[1mm]
&-\tr(\IIo^2\Rho)
+ \IIo^{ab}\nablab_a \! \stackrel{\sss n}\Rho{\!}_b^{\!\top}
+K\Rho(n,n)
 - \IIo^{ab}{\nablab}^d\Wn_{bad}^\top\, .
\end{split}
\end{equation*}
Combining the above expression and Equation~\nn{nablanIIo}, we have\begin{equation}\label{nablan2IIo}
\begin{split}
\IIo^{ab}\nablan^2\IIo_{ab} \stackrel\Sigma=&\phantom{+} \IIo^{ab}\big(\nablan\Rn_{ab} + 2\nablan\Rho_{ab} \big) - 2\IIo^{ab}\IIo_{cb}\nablan\IIo^c_a 
+ K\nablan\rho 
\\[1mm]&
+ \rho\IIo^{ab}\nablan\IIo_{ab} -3\rho\tr(\IIo\Rho^\top) + 2\tr(\IIo^2\Rho^{\!\top}) - \IIo^{ab}\nabla_a\nabla_b\rho
\\[1mm]
=&\phantom{+}
\frac13\IIo^{ab}\bar\Delta\IIo_{ab}
-\frac13\bar \J K
+2H\tr \IIo^3
+\frac12H^2K
+\frac14K^2
-2W(n,\IIo^2,n)
\\[1mm]&
-2HW(n,\IIo,n)
+\IIo^{ab}\IIo^{cd}W_{cabd}
+K\Rho(n,n)
 - \frac43\IIo^{ab}\nablab^c\Wn_{bac}^\top
\, .
\end{split}
\end{equation}
Here we have used Equations~\nn{nablann}, \nn{nablanIIo}
and~\nn{Fialkow} as well as the $3\times 3$ matrix identity~$\tr\IIo^4 = \frac12 K^2$. In addition, to handle the term $\IIo^{ab}\nabla_a\nabla_b \rho$, the following consequence of the Mainardi Equation~\nn{Main} is needed:
$$
\IIo^{ab}\nablab_a\nablab_b H = \frac13 \IIo^{ab}\bar\Delta\IIo_{ab} - \IIo^{ab}\nablab_a\!\!\stackrel{\sss n}{\Rho}{\!\!}_b^\top - \frac13 \bar \J K - \tr(\IIo^2\bar\Rho) - \frac13 \IIo^{ab}\nablab^c\Wn_{bac}^\top \, .
$$
The computation of~$\nablan^2 K$ also requires the cross term~$(\nablan\IIo_{ab})(\nablan\IIo^{ab})$. Again using Equation~\nn{nablanIIo} we have
$$
\nablan\IIo_{ab} \stackrel\Sigma= \Wn_{ab} -n_an_b K + n_{(a}\nablab.\IIo_{b)} - H\IIo_{ab} - \IIo^2_{ab} + \frac12 g_{ab} K\, .
$$
Contracting this identity on itself we find
\begin{equation*}
\begin{split}
(\nablan\IIo_{ab})(\nablan\IIo^{ab}) &= \Wn_{ab}\Wn^{ab} - 2HW(n,\IIo,n) -2W(n,\IIo^2,n) +\frac12(\nablab.\IIo_b)(\nablab.\IIo^b)
\\[1mm]& + H^2K + 2H\tr(\IIo^3) + \frac12K^2 \, .
\end{split}
\end{equation*}
Orchestrating this and Equation~\nn{nablan2IIo}, we have
\begin{equation*}
\begin{split}
\frac12\nablan^2 K&=
\frac12 (\nablab.\IIo_a)(\nablab.\IIo^a)
+\frac13\IIo^{ab}\bar\Delta\IIo_{ab}
+ \frac34 K^2-4W(n,\IIo^2,n)
+\Wn_{ab}\Wn^{ab}
+\frac32H^2K\\[1mm]&-4HW(n,\IIo,n)
+4H\tr\IIo^3
-\frac13\bar \J K
+\IIo^{ab}\IIo^{cd}W_{cabd}
+\Rho(n,n)K
 -\frac43 \IIo^{ab}\nablab^c\Wn_{bac}^\top
\, .
\end{split}
\end{equation*}
Substituting this result into Equation~\nn{B''1} and simplifying  gives
\begin{equation*}
\begin{split}
B^{\prime\prime}&=\frac 13 \nablab^a\IIo^{bc}\nablab_a \IIo_{bc}+
\frac12(\nablab.\IIo_a)(\nablab.\IIo^a)
+\frac23\IIo^{ab}\bar\Delta\IIo_{ab}
-\frac23\bar \J K
 - \frac43\IIo^{ab}\nablab^c\Wn_{bac}^\top\\[1mm]&
-4W(n,\IIo^2,n)
+ \frac12 K^2
+\Wn_{ab}\Wn^{ab}
+\IIo^{ab}\IIo^{cd}W_{cabd}
\, .
\end{split}
\end{equation*}
Using Proposition~\ref{LIIo2o} gives the result~\nn{B''}, which completes the proof.
\end{proof}

We now
 complete the proof of Proposition~\ref{Calzone}
 giving the obstruction density for four-manifolds:
 
%
\begin{proof}[Proof of Proposition~\ref{Calzone}]
Using Equation~\nn{holo-form} and Propositions~\ref{P3N} and~\ref{DK}, the obstruction density is given by
$$
\B_3 = \frac{1}{72}\bar D^A \big[4X_A(B^{\prime} + B^{\prime\prime})\big] \, .
$$
Substituting the formul\ae\,~\nn{B'} and \nn{B''} for~$B^\prime$ and~$B^{\prime\prime}$ into the above and using Lemma~\ref{DXT} gives the result.
\end{proof}

\subsection{Examples and umbilicity}

Our results can be easily applied to explicit metrics
and hypersurfaces. As we shall explain below, this gives both an independent check of our computations
and also generates interesting examples. 

For surfaces, total umbilicity suffices to ensure vanishing of the obstruction density (this follows by inspection of the formula for $\B_2$ given in Proposition~\ref{B2}).
The following proposition shows this is not true in dimension $\bar d=3$.

\begin{proposition}
Total umbilicity ({\it i.e.}, vanishing of $\IIo$ everywhere) is not a sufficient condition for the obstruction density to vanish.
\end{proposition}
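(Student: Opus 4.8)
The plan is to first collapse the formula of Proposition~\ref{Calzone} under the umbilicity hypothesis, and then to exhibit an explicit totally geodesic counterexample sitting inside a non-conformally-flat four-manifold.

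Suppose $\IIo$ vanishes identically along $\Sigma$. Then $K=\tr\IIo^2=0$, and the trace-free Codazzi--Mainardi Equation~\nn{tfcm} expresses $\Whn^\top_{abc}$ as the trace-free tangential curl of $\IIo$, so $\Whn^\top_{abc}=0$ as well. Since $L^{ab}$ is a tangential operator on $\Sigma$, it annihilates the identically vanishing tensor $\IIo^2_{(ab)\circ}$, and every remaining term of Proposition~\ref{Calzone} carrying an explicit factor of $\IIo$ drops out --- in particular $\IIo^{ab}B_{ab}$ (so the hypersurface Bach tensor never enters), $\Whn^{ab}\IIo^2_{ab}$, $\IIo^{ab}\IIo^{cd}W_{cabd}$ and $\Whn^\top_{abc}\Whn^{abc}$. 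We are left with
$$
\B_3\ \stackrel{\Sigma}{=}\ \frac16\big(2\,\Whn_{ab}\Whn^{ab}-L^{ab}\Whn_{ab}\big)\, .
$$
Thus it suffices to produce a totally umbilic hypersurface for which this quantity is nonzero; it follows from the displayed identity that the ambient manifold must then have nonvanishing Weyl curvature.

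For the example I would take $M=S^2(a)\times S^2(b)$, the product of round two-spheres of radii $a,b$, and $\Sigma:=S^2(a)\times\gamma$, where $\gamma\subset S^2(b)$ is a great circle. A product of totally geodesic submanifolds is totally geodesic, so $\IIo=0=H$; and $M$ (a product of surfaces whose curvatures do not cancel) is not conformally flat. With $\IIo=H=0$ the Fialkow--Gau\ss\ Equation~\nn{Fialkow} collapses to $\Whn_{ab}=\bar\Rho_{ab}-\Rho^\top_{ab}$ along $\Sigma$; computing the two (block-diagonal) Schouten tensors is elementary, and one finds $\Whn_{ab}$ is $\nablab$-parallel on $\Sigma=S^2(a)\times S^1$, whence $L^{ab}\Whn_{ab}=\bar\Rho^{ab}\Whn_{ab}$ with no derivative contributions. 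Substituting into the displayed identity gives
$$
\B_3=\frac{1}{18\,b^2}\Big(\frac1{a^2}+\frac1{b^2}\Big)\neq 0\, ,
$$
which proves the proposition. (Note the flat degeneration: letting $b\to\infty$, i.e. replacing $S^2(b)$ by $\mathbb R^2$, sends $\B_3\to0$, so genuine curvature in the second factor is essential --- a totally umbilic hypersurface of a conformally flat space, for which $\Whn\equiv0$, always has $\B_3=0$.)

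The only real obstacle is the reduction step: one must be sure that umbilicity genuinely annihilates $\Whn^\top_{abc}$ --- which is not apparent from the definition of this invariant and depends on recognizing it, via Equation~\nn{tfcm}, as a curl of $\IIo$ --- together with the Bach-tensor term. The subsequent curvature computation is short, the one point deserving care being the parallelism of $\Whn_{ab}$, since that is what lets the second-order operator $L^{ab}$ reduce to its zeroth-order Schouten piece.
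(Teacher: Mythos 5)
Your proof is correct, but it takes a genuinely different route from the paper's. The paper's ``proof'' is a one-line pointer to the third row of the table in Section~6.2: the metric $dx^2+(1+z)dy^2+(1+w)dz^2+(1+y)dw^2$ with $\Sigma=\{x=0\}$ is umbilic and minimal yet has $\B_3\neq0$, the value having been produced by computer algebra from the recursion in the appendix. You instead first prove the structural reduction
$\B_3\stackrel{\Sigma}{=}\frac16\big(2\Whn_{ab}\Whn^{ab}-L^{ab}\Whn_{ab}\big)$
for any totally umbilic $\Sigma$ --- which is sound: $\IIo\equiv0$ on $\Sigma$ kills $K$, kills $\Whn^\top_{abc}$ via the trace-free Codazzi--Mainardi Equation~\nn{tfcm}, and kills $L^{ab}\IIo^2_{(ab)\circ}$ because $L^{ab}$ is intrinsic to $\Sigma$ --- and then you evaluate on $S^2(a)\times\gamma\subset S^2(a)\times S^2(b)$. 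I checked the curvature computation: with $c:=\frac16(a^{-2}+b^{-2})$ one gets $\Whn=c\,g_{S^2(a)}\oplus(-2c)\,g_\gamma$ (parallel and trace-free), $\bar\Rho^{ab}\Whn_{ab}=2c/a^2$, $\Whn_{ab}\Whn^{ab}=6c^2$, and indeed $\B_3=\frac{1}{18b^2}(a^{-2}+b^{-2})\neq0$. Your approach buys more than the paper's: a closed-form expression for $\B_3$ of umbilic hypersurfaces showing exactly which ambient curvature is responsible (the normal--normal Weyl components), a hand-checkable example, and the corollary that umbilic hypersurfaces of conformally flat four-manifolds always have vanishing obstruction; the paper's table entry, by contrast, requires trusting a symbolic computation but needs no reduction of the formula in Proposition~\ref{Calzone}. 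The only point worth flagging is that your argument presupposes Proposition~\ref{Calzone}, whereas the paper's example is verified independently through the defining-function recursion --- but since the umbilicity claim appears after the proof of Proposition~\ref{Calzone}, there is no circularity.
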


\begin{proof}
 See the
 explicit counterexample on the third line of the table below. \end{proof}
 
 \begin{remark}
 There is no in principle difficulty constructing examples of umbilic hypersurfaces with non-vanishing obstruction density in dimensions ~$\bar d>3$. Also, albeit in Lorentzian signature, the Schwarzschild example below provides another $\bar d=3$ example of this.
 \end{remark}

In the following table we give
some sample four manifold metrics, hypersurfaces and their obstruction density~${\mathcal B}_3$. Additionally, $\bar \sigma$ denotes the conformal unit density in the scale determined by the quoted metric, $H$ the mean curvature, $\IIo$ the trace-free second fundamental form and $L$ the membrane
rigidity density. Non-vanishing expressions that are too long to be displayed are denoted  by a~$\star$.
These results were generated with the aid of a computer software package~\cite{grtensor}.
\begin{center}
\begin{tabular}{|c|c|c|c|c|c|c|}
\hline
{}$ds^{2^{\phantom{2^2}\!\!\!\!}}$&$\Sigma$&$\bar \sigma$&$H$&$\IIo$&${\mathcal B}_3$&$L$\\[.2mm]
\hline\hline
${\sss dx^2+(1+\alpha x)(dy^2+dz^2+dw^2)}$&
${\sss x=0}$&\!${\sss x+\frac\alpha4x^2-\frac{\alpha^2}8x^3+\frac{5\alpha^3}{64}x^4}$\!&
$\frac\alpha2$&
0&
0&
0\\[1mm]
${\sss dx^2+(1+\alpha x)dy^2+(1-\alpha x)dz^2 +dw^2}$&
${\sss x=0}$&
${\sss x-\frac{\alpha^2}8 x^3}$&
${ 0}$&
${\sss \frac\alpha2(dy^2-dz^2)}$&
${\sss \frac{\alpha^4}{12}}$&
${ 0}$\\[1mm]
$\!{\sss dx^2+(1+z)dy^2+(1+w)dz^2+(1+y)dw^2}$\!&
${\sss x=0}$&
$\star$&
$0$&
$0$&
$\star$&
$0$\\[1mm]
${\sss -(1-\frac{\sss 2M}{r})dt^2
+\frac{dr^2}{1-\frac{\sss 2M}r}+r^2 d\Omega^2 }$&
\!${\sss r=R+2M}$\!&
$\star$&
\!\scalebox{.6}{Eq.~\nn{SII}}
\!\!\!&
\scalebox{.6}{Eq.~\nn{SII}}
&
\!\scalebox{.6}{Eq.~\nn{SB}}
\!\!\!&
\!\scalebox{.6}{Eq.~\nn{SL}}
\!\!\!
\\[2mm]
 \hline
\end{tabular}
\end{center}
\medskip
The first line of the table gives a metric and an umbilic but  non-minimal hypersurface for which the obstruction density vanishes. In the second line, the hypersurface is minimal but non-umbilic. The third line is particularly interesting, the hypersurface is both umbilic and minimal, yet  the obstruction density does not vanish.
Finally in the fourth line, we use the fact that all local formul\ae\  presented in this paper can be extended to Lorentzian structures (provided the conormal is nowhere null), and study a spherical shell of radius $R$ in the Schwarzschild metric. 
The conormal ${\bm d}r$ is spacelike for $R>0$, timelike for $R<0$ and lightlike at the horizon $R=0$, so that strictly the formul\ae\  presented below apply only to the spacelike case.
In that case the  
 second fundamental form
 is given by
\begin{equation}\label{SII}
H=\frac13
\frac{M+2R}{\sqrt{R(R+2M)^3}}\, ,\quad
\IIo=
-\frac13(M-R)\sqrt{\frac{R+2M}{R}}\, 
\Big(\frac{2Rdt^2}{(R+2M)^3}+d\Omega^2\Big)\, ,
\end{equation}
so that the hypersurface~$\Sigma$ is neither minimal nor umbilic, save for the distinguished radius $R=M$ where umbilicity holds. 
At that value the membrane rigidity density also vanishes; at  general $R$ it is given by
\begin{equation}\label{SL}
L=\frac{2(M-R)(M^2+6MR+R^2)}{R\sqrt{R(R+2M)^9}}\, .
\end{equation}
The obstruction density is
\begin{equation}\label{SB}
{\mathcal B}_3=\frac{2M^4+4M^3 R+ 30 M^2 R^2 -8MR^3-R^4
 }{27R^2(R+2M)^6}
\end{equation}
and has one simple real root in the region  $R>-2M$ at $R/M\approx 2.9$.

Finally let us explain how the fornul\ae\ for the obstruction density can be generated without simply 
computing each term in Proposition~\ref{Calzone}.
This is based on the two iterative recursions given in~\cite[Lemma 2.4 and Proposition 4.9]{GW15}. Firstly, given a four manifold with metric $g$ and hypersurface~$\Sigma$ labeled by a defining function $s$, we improve the defining function to 
$$s_0= \frac{s}{|\nabla s|}$$
which ensures that $|\nabla s_0|\big|_\Sigma=1$. It is highly propitious to work with a unit defining function~$s_1$ with $|\nabla  s_1|=1$. For applications, it suffices to require $|\nabla  s_1|=1+s^\ell f$ for smooth $f$ and $\ell$ of sufficiently high order. This is achieved using the recursion of~\cite[Lemma 2.4]{GW15}. For the first three examples tabulated above, $s_1=x$ is already a unit defining function, but for the Schwarzschild case,
we performed this algorithm  explicitly using the computer software package~\cite{grtensor}.

Supposing now that $s$ is a unit defining function, we next improve this to a conformal unit defining density evaluated in the scale determined by the metric $g$. This can be achieved using the recursion of~\cite[Proposition 4.9]{GW15}. 
In practice this simply amounts to repeatedly  computing the leading failure of $I_{\sigma}^2$ to equal unity and then 
adding a next to leading correction to $\sigma$ to correct for this. In Appendix~\ref{udfif} we present  the first four improvement terms for  a unit defining function~$s$  as well as the corresponding obstruction densities for hypersurface  dimensions $\bar d=1,2,3$ and~$4$. These were computed using the symbolic manipulation software~\cite{form}.
We have verified their correctness by checking that
the resulting~$I_{\sigma}^2$ equals unity to the required order for explicit metric and hypersurface examples, including those given above.

We note that the result for $B_4$ quoted in the appendix, when evaluated along~$\Sigma$ gives the obstruction density for 
 five-manifold hypersurfaces expressed in terms of a unit defining function. This is a novel result that is simple to use in practical applications. 
 There is no in principle difficulty employing the unit defining function identities of Sections~\ref{dfvc} and~\ref{cfm} and their higher order analogs to evaluate~${\mathcal B}_5$ in terms of hypersurface invariants. 

\section*{Acknowledgements}
A.G. and A.W. would like to thank R. Bonezzi, C.R. Graham and Y. Vyatkin for helpful comments and discussions. The authors gratefully acknowledge support from the Royal Society of New Zealand via Marsden Grant 13-UOA-018 and the UCMEXUS-CONACYT grant CN-12-564. A.W. thanks for their warm hospitality the University of Auckland and the Harvard University Center for the Fundamental Laws of Nature. A.W. was also supported by a Simons Foundation Collaboration Grant for Mathematicians ID 317562.

\appendix

\section{Conformal unit defining function formul\ae\  and five-manifold obstruction density}\label{udfif}

\allowdisplaybreaks

Let $s$ be a defining function for a hypersurface~$\Sigma$ in a $d$-dimensional Riemannian manifold~$(M,g)$. The following formul\ae\  improve this to $s_k$ which determines a defining density $\sigma_k$ in the scale~$g$ such that
$$
I_{\sigma_{k}}^2 = 1 + \sigma^k f\, , \quad k<d\, ,
$$  
for some smooth $f\in\Gamma(\ce M[-k])$:

\begin{eqnarray*}
s_0\!\!&=&\!\!s\\[3mm]
s_1\!\!&=&\!\!s_0+\frac{1}{2(d-1)}\, s^2\, \nabla.n\\[3mm]
s_2\!\!&=&\!\!s_1+\frac1{6(d-2)(d-1)}\, s^3\, \big(2(\nabla.n)^2-(d-4)\, \nabla_n\nabla.n+2(d-1)\, \J\big)\\[3mm]
s_3\!\!&=&\!\!s_2
+\frac1{24(d-3)(d-2)(d-1)}\, s^4\, 
\Big(\frac{5d-6}{d-1}\, (\nabla.n)^3
-4(2d-9)\, \nabla.n \, \nabla_n\nabla.n\\[1mm]
&+&\!\!\!\!(d-4)(d-6)\, \nabla_n^{\, 2}\nabla.n        
+3(d-2)\, \Delta\nabla.n
+4(2d-3)\, \nabla.n\, \J
-2(d-6)(d-1)\, \nabla_n \J\Big) \\[3mm]
s_4\!\!&=&\!\!s_3+\frac{1}{120(d-4)(d-3)(d-2)(d-1)}\, s^5\, \Big(
\frac{13d^3-62d^2+100d-48}{(d-2)(d-1)^2}\, (\nabla.n)^4\\[1mm]
&+&\!\!\frac{398d^2\!-880d\!-49d^3\!+576}{(d-2)(d-1)}\, (\nabla.n)^2\, \nabla_n\!\nabla.n
+\frac{13d^3\!-156d^2\!+524d\!-384}{d-1}\, \nabla.n \, \nabla_n^2\nabla.n\\[1mm]
&+&\!\!\frac{25d^2-94d+72}{d-1}\, \nabla.n\  \Delta\nabla.n
+\frac{11d^4-149d^3+668d^2-1112d+576}{(d-2)(d-1)}\, (\nabla_n\nabla.n)^2\\[1mm]
&-&\!\!(d-4)(d-6)(d-8)\, \nabla_n^3\nabla.n
-\frac{(3d^2-22d+16)(d-3)}{d-1}\, (\nabla \nabla.n)^2\\[1mm]
&-&\!\!4(d-4)(d-3)\, \Delta\nabla_n\nabla.n\!
-3(d\!-2)(d\!-8)\, \nabla_n \Delta \nabla.n
+\frac{2(13d^2\!-58d\!+72)}{(d-2)}\, \!(\nabla.n)^2 \J\\[1mm]
&-&\!\!\frac{4(4d^3-35d^2+88d-72)}{d-2}(\nabla_n\nabla.n)\, \J
+\frac{4(d-4)(d-3)(d-1)}{d-2}\, \J^2\\[1mm]
&-&\!\!6(3d-4)(d-6)\nabla.n\, \nabla_n\J
+8(d-3)(d-1)\, \Delta\J	+2(d-8)(d-6)(d-1)\, \nabla_n^2\J
\Big)\, .
\end{eqnarray*}
When $k=d$, we have $I^2=1+\sigma^d B_{\bar d}$ where the obstruction density ${\mathcal B}_{\bar d}=B_{\bar d}|_\Sigma$. Thus the following determine the obstruction density in dimensions $d=2,3,4$ and~$5$. 
\begin{eqnarray*}
B_1&=&-(\nabla.n)^2-\nabla_n\nabla.n-\J\\[2mm]
B_2&=&-\frac1{12}\, \big(2\Delta\nabla.n+2\, \nabla_n^2\nabla.n+8\, \nabla.n\, \nabla_n\nabla.n+3\, (\nabla.n)^3+8\nabla.n\, \J + 8\, \nabla_n\J\big)\\[1mm]
B_3&=&-\frac{1}{108}\, \Big(9\, \nabla_n\Delta\nabla.n+12\, \nabla.n\, \Delta\nabla.n+6\, \nabla.n\,\nabla_n^2\nabla.n +3\, (\nabla\nabla.n)^2\\[1mm]
&&\qquad\:\:+\ 6(\nabla_n\nabla.n)^2
+18\, (\nabla.n)^2\, \nabla_n\nabla.n+4\, (\nabla.n)^4\\[1mm]&&\qquad\:\:+\ 9\, \Delta\J+18\, \nabla_n^2 \J+36\, \nabla.n\, \nabla_n\J+18\, \nabla_n\nabla.n\, \J+18\, (\nabla.n)^2\, \J
     \Big)\\[2mm]
B_4&=&
  \frac1{46080}\Big(
          -\,  312 \nabla.n (\nablan\nabla.n)^2
          + 352 \nabla.n (\nablan^3\nabla.n)
          - 155 \nabla.n^2 \nabla.n^3
          \\[1mm]
&&\qquad\
          + 44\,  \nabla.n^2 (\nablan^2\nabla.n)
          - 832 \nabla.n^3 (\nablan\nabla.n)
          + 480 (\nablan\nabla.n) (\nablan^2\nabla.n)
        \\[1mm]
&&\qquad\  
          +\,  96\,  (\nablan^4\nabla.n)
          + 32 \Delta\nablan^2\nabla.n
          + 288 \nabla.n\Delta\nablan\nabla.n 
          \\[1mm]
&&\qquad\
          +\,  256\,  \nablan \Delta \nablan \nabla.n
          + 96 (\nabla_a\nabla.n)\nabla^a\nablan \nabla.n
          - 552\nabla.n (\nabla_a\nabla.n)\nabla^a\nabla.n 
          \\[1mm]
&&\qquad\
          -\,  608\,  (\nabla_a\nabla.n) \nablan\nabla^a\nabla.n
          - 1436  \nabla.n^2\Delta\nabla.n
          - 1248  (\nablan\nabla.n)\Delta\nabla.n
          \\[1mm]
&&\qquad\
          -\,  2176\,   \nabla.n\nablan\Delta\nabla.n
          - 864 \nablan^2\Delta\nabla.n
          - 288 \Delta^2\nabla.n
          - 2304\,   \nabla.n (\nablan\nabla.n)\J
                 \\[1mm]
&&\qquad\
          -\,  832\,   \nabla.n^3\J
          - 640  (\nablan^2\nabla.n)\J
          - 384\,   (\Delta\nabla.n)\J
          + 1024 \J^2 \nabla.n
          + 512 \J \nablan\J
               \\[1mm]
&&\qquad\
          -\,  2496\,   \nabla.n^2 \nablan\J
          - 2304 (\nablan\nabla.n)\nablan\J 
          + 1536  \J\nablan\J
          - 2432\,   \nabla.n\nablan^2\J
                    \\[1mm]
&&\qquad\
          -\, 768\,  \nablan^3\J
          - 256 \Delta\nablan\J
          - 512\,  (\nabla_a\nabla.n)\nabla^a\J
          - 2176 \nabla.n \Delta\J 
          - 2048 \nablan\Delta\J\Big)\, .
\end{eqnarray*}
The above yields ${\mathcal B}_1=B_1|_\Sigma=0$
and it is not difficult to show that~$B_2|_\Sigma$
agrees with~${\mathcal B}_2$ as in  Proposition~\ref{B2}. The expressions above for $B_3$ and $B_4$  provide an efficient way to compute the obstruction densities ${\mathcal B}_3$ and ${\mathcal B}_4$ for explicit metrics, but expressing the above formul\ae\  in terms of standard hypersurface invariants is rather tedious. For $(M,g)$ flat and~$\bar d=3$, this computation was performed in~\cite{GW15}
and agrees with our generally curved  expression for~${\mathcal B}_3$ given in Proposition~\ref{Calzone}.

\bibliographystyle{hep}
\bibliography{Master}

\end{document}